\newtheorem{thm}{Theorem}[section]
\newtheorem{prop}[thm]{Proposition}
\newtheorem{cor}[thm]{Corollary}
\newtheorem{lem}[thm]{Lemma}
\newtheorem{defi}[thm]{Definition}
\newtheorem{remark}[thm]{Remark}
\newtheorem{example}[thm]{Example}
\newtheorem{pb}[thm]{Problem}
\newenvironment{rk}{\begin{remark}\rm}{\end{remark}}
\numberwithin{equation}{section}
\newcommand{\real}{{\mathbb R}}
\newcommand{\nat}{{\mathbb N}}
\newcommand{\ent}{{\mathbb Z}}
\newcommand{\un}{{\mathds {1}}}
\newcommand{\norm}[1]{\left\Vert#1\right\Vert}
\newcommand{\abs}[1]{\left\vert#1\right\vert}
\newcommand{\A}{{\mathcal A}}
\newcommand{\C}{{\mathcal C}}
\renewcommand{\H}{{\mathcal H}}
\newcommand{\I}{{\mathcal I}}
\newcommand{\K}{{\mathcal K}}
\newcommand{\V}{\mathcal{V}}
\renewcommand{\a}{\alpha}
\renewcommand{\b}{\beta}
\renewcommand{\d}{\delta}
\newcommand{\f}{\varphi}
\renewcommand{\l}{\lambda}
\renewcommand{\O}{\Omega}
\newcommand{\s}{\sigma}
\newcommand{\8}{\infty}
\newcommand{\el}{\ell}
\newcommand{\wt}{\widetilde}
\newcommand{\n}{\noindent}
\newcommand{\be}{\begin{eqnarray*}}
\newcommand{\ee}{\end{eqnarray*}}
\newcommand{\beq}{\begin{equation}}
\newcommand{\eeq}{\end{equation}}
\newcommand{\beqn}{\begin{equation*}}
\newcommand{\eeqn}{\end{equation*}}
\newcommand{\cqd}{\hfill$\Box$}
\begin{document}

\title[Weighted variation inequalities]{Weighted variation inequalities for differential operators and singular integrals}

\thanks{{\it 2000 Mathematics Subject Classification:} Primary: 42B20, 42B25. Secondary: 46E30}
\thanks{{\it Key words:} Variation inequalities, vector-valued inequalities, (one-sided) $A_p$ weights, differential operators, singular integrals}

\author{Tao Ma}
\address{School of Mathematics and Statistics, Wuhan University, Wuhan 430072, China }
\email{tma.math@whu.edu.cn}

\author{Jos\'e Luis Torrea}
\address{Departamento de Matem\'aticas and ICMAT-CSIC-UAM-UCM-UC3M, Facultad de Ciencias, Universidad
Aut\'onoma de Madrid, 28049 Madrid, Spain }
\email{joseluis.torrea@uam.es}

\author{Quanhua Xu}
\address{School of Mathematics and Statistics, Wuhan University, Wuhan 430072, China and Laboratoire de Math{\'e}matiques, Universit{\'e} de Franche-Comt{\'e},
25030 Besan\c{c}on Cedex, France}
\email{qxu@univ-fcomte.fr}

\date{}
\maketitle

\begin{abstract}
 We prove weighted strong $q$-variation inequalities with $2<q<\8$ for differential and singular integral operators. For the first family of operators the weights used can be either Sawyer's one-sided $A^+_p$ weights or Muckenhoupt's  $A_p$ weights according to that the differential operators in consideration are one-sided or symmetric. We use only Muckenhoupt's  $A_p$ weights for the second family. All these inequalities hold equally in the vector-valued case, that is, for functions with values in $\el^\rho$ for $1<\rho<\8$.  As application, we show variation inequalities for  mean bounded positive invertible operators on $L^p$ with positive inverses.
\end{abstract}

\bigskip



\section{Introduction}


Variation inequalities have been the subject of many recent research papers  in probability, ergodic theory and harmonic analysis.  One important feature of these inequalities is the fact that they immediately imply the pointwise convergence of the underlying family of operators without using the Banach principle via the corresponding maximal inequality. Moreover, these variation inequalities  can be used to measure the speed of convergence of the family.

The first variation inequality was proved by L\'epingle \cite{Le} for martingales which  improves the classical Doob maximal inequality (see also \cite{PX} for a different approach and related results). Thirteen years later, Bourgain \cite{Bour} proved the variation inequality  for the ergodic averages of a dynamic system. Bourgain's work has inaugurated a new research direction in ergodic theory and harmonic analysis. It was considerably improved by subsequent works and largely extended to many other operators in ergodic theory; see, for instance, \cite{jrw-high, kaufman, LMX}. Almost in the same period, variation inequalities have been studied in harmonic analysis too. The first work on this subject is \cite{CJRW1} in which Campbell, Jones,  Reinhold and  Wierdl proved the variation inequalities for the Hilbert transform. Since then many other publications came to enrich the literature on this subject (cf. e.g., \cite{CJRW2, DMC,  GT, HLP, jsw, JW, Mas, MTo, MTo2, OSTTW}).

\medskip

The purpose of this paper is to study weighted variation inequalities for differential and singular integral operators. The first family of operators can be considered both in the discrete and continuous cases. To fix ideas let us confine ourselves to the former. Given a function $f$ on $\ent$ define
 $$A^+_N(f)(n)=\frac{1}{N+1}\sum_{i=0}^{N}f(n+i)$$
and $\A^+ (f)(n)=\{A^+_N(f)(n)\}_{N\ge0}$. $\A^+$ is an operator mapping functions on $\ent$ to sequences of functions on $\ent$. We will study the variation of the sequence $\A^+ (f)(n)$.

Let $1\le q<\8$ and $a=\{a_N\}_{N\ge0}$ be a sequence of complex numbers. The $q$-variation of $a$ is defined as
 \beq\label{variation}
 \|a\|_{v_q} =\sup\big(\sum^\infty_{j=0}|a_{N_j}-a_{N_{j+1}}|^q\big)^{1/q},
 \eeq
where the supremum runs over all increasing sequences $\{N_j\}$ of nonnegative integers. Let $v_q$ denote the space of all sequences with finite $q$-variation. This is a Banach space modulo constant functions.
Let $ \V_q\A^+ (f)(n)=\|\A^+(f)(n)\|_{v_q}$. Thus   the operator $\V_q\A^+$ sends  functions on $\ent$ to  nonnegative functions on $\ent$. Throughout the paper,  $\V_q$ designates the operator which maps a sequence to its $q$-variation. Later in the continuous case, the same symbol $\V_q$ will also be the operator mapping functions on $(0,\,\8)$ to their $q$-variations.

Bourgain's theorem quoted before asserts that for any $2<q<\8$, $\V_q\A^+$ is bounded on $\el^2(\ent)$. This result was extended to $\el^p(\ent)$ for any $1<p<\8$ in \cite{kaufman}. Moreover. Jones {\it et al} also proved that $\V_q\A^+$ is of weak type $(1, 1)$, namely, it maps $\el^1(\ent)$ into $\el^{1,\8}(\ent)$.

\medskip

These $q$-variation inequalities improve the classical (one-sided) Hardy-Littlewood maximal inequality that we recall as follows. Let
 $$M^+(f)(n) = \sup_{N\ge0} A^+_N(|f|)(n).$$
Then $M^+$ is of type $(p, p)$ for $1<p\le\8$ and weak type $(1, 1)$. This follows from the previous $q$-variation results by virtue of the trivial inequality $M^+(f)\le \V_q\A^+(f) +f(0)$ for any nonnegative function $f$.

Sawyer \cite{Sawyer} characterized  the weights $w$ on $\ent$ for which $M^+$ is bounded on $\el^p(\ent, w)$ with $1<p<\8$, and maps $\el^1(\ent, w)$ into $\el^{1,\8}(\ent, w)$. These are the so-called $A^+_p$ weights that are defined below.
Let $w$ be a positive  function on  $\ent$.
\begin{enumerate}[$\bullet$]
\item   $w\in A_1^+$ if there exists a constant $C$ such that
 $$\sum_{n-k}^n w(i) \le C (k+1)\min\{w(i) : i \in [n,\,n+k]\},\quad\forall \; n\in\ent,\; k\ge0.$$

\item  $w\in A_p^+$ (with $1<p<\8$) if there exists a constant $C$ such that
 $$\sum_{i=0}^k w(n+i) \,\big(\sum_{i=k}^{2k} w(n+i)^{-\frac{1}{p-1}}\big)^{p-1} \le C(k+1)^p,\quad\forall \; n\in\ent,\; k\ge0.$$
\end{enumerate}

It is thus natural to wonder whether Sawyer's weighted inequalities hold for $\V_q\A^+$ in place of $M^+$. The first main result of our paper provides an  affirmative answer to this question. Namely, $\V_q\A^+$ is bounded on $\el^p(\ent, w)$ for $1<p<\8$ and $w\in A^+_p$, and from $\el^1(\ent, w)$ into $\el^{1,\8}(\ent, w)$ for $w\in A^+_1$.

\medskip

Sawyer's result is the one-sided analogue of Muckenhoupt's celebrated characterization of  $A_p$ weights for the symmetric Hardy-Littlewood maximal function. Our reference for real variable harmonic analysis is \cite{gar-rubio}. The reader is also referred to this book for all results quoted below but without reference. Let us just recall the definition of $A_p$ weights.  For a nonnegative function $w$ on $\ent$, by definition
\begin{enumerate}[$\bullet$]
 \item  $w\in A_p$ (with $1<p<\8$) if there exists a constant $C$ such that
 $$\sum_{i\in I} w(i) \,\big(\sum_{i\in I} w(i)^{-\frac{1}{p-1}}\big)^{p-1} \le C|I|^p$$
for any interval $I\subset\ent$;
 \item   $w\in A_1$ if there exists a constant $C$ such that
 $$M(w)\le Cw.$$
 \end{enumerate}
Here for a function $f$ on $\ent$, $M(f)$ denotes the usual Hardy-Littlewood maximal function:
 $$M(f)(n)=\sup_I\frac1{|I|}\sum_{i\in I} |f(i)|,$$
where  the supremum runs over all intervals  containing $n$.

On the other hand, it is well known that  $A_p$ weights  can be also characterized by the boundedness of the Hilbert transform $H$. This time,  it is more convenient to work  on $\real$ instead of $\ent$. The above definition of $A_p$ weights remains valid in $\real$ without any change.

We make a convention at this occasion: we will use the same notational system for $\ent$ and $\real$.  For example, $M(f)$ also denotes the  Hardy-Littlewood maximal function of $f$ on $\real$.

The Hilbert transform is the following singular integral (taking in the principal value sense):
 \beq\label{H}
 H(f)(x)=\int_{\real}\frac{f(y)}{x-y}\,dy.
 \eeq
Let $w$ be a weight on $\real$. Then $H$ is bounded on $L^p(\real, w)$ with $1<p<\8$ if and only if $w\in A_p$, and maps  $L^1(\real, w)$ to $L^{1,\8}(\real, w)$ if and only if $w\in A_1$. Because of the singularity of the integral above, it is more convenient to consider its truncations:
   \beq\label{Ht}
   H_t(f)(x)=\int_{|x-y|>t}\frac{f(y)}{x-y}\,dy.
   \eeq
 Let $H^*(f)(x)=\sup_{t>0}\big|H_t(f)(x)\big|$. Then the above statement still holds with $H^*$ instead of $H$.

\medskip

In the spirit of the weighted variation inequality for the differential operators, we wish to show the $q$-variation analogue of the last statement, i.e., replacing the maximal function $H^*(f)$ by the corresponding $q$-variation. The $q$-variation of a family indexed by a continuous time $t$ is defined exactly as in \eqref{variation}. More precisely, for a family  $a=\{a_t\}_{t>0}$ of complex numbers we define
 $$\|a\|_{v_q} =\sup\big(\sum^\infty_{j=0}|a_{t_j}-a_{t_{j+1}}|^q\big)^{1/q},$$
where the supremum runs over all increasing sequences $\{t_j\}$ of positive numbers. We use again $v_q$ to denote the space of all functions on $(0,\,\8)$ with finite $q$-variation.

Then let  $\H (f)(x)=\{H_t(f)(x)\}_{t>0}$ and $\V_q\H(f)(x)=\|\H (f)(x)\|_{v_q}$. A special case of our second main theorem asserts that for $2<q<\8$ the operator $\V_q\H$ is bounded on $L^p(\real, w)$ for $1<p<\8$ and $w\in A_p$, and from $L^1(\real, w)$ into $L^{1,\8}(\real, w)$ for $w\in A_1$. This is the weighted version of the main result of \cite{CJRW1}. In fact, we show weighted $q$-variation inequalities for a singular integral with a regular kernel provided that the associated $q$-variation operator is bounded on $L^p(\real)$ for some $1<p<\8$. 

\medskip

In the literature, along with variation inequalities, another family of inequalities have equally received much attention. They are oscillation inequalities. Given a fixed sequence $\{N_j\}$ of nonnegative integers,   the oscillation of a sequence $a=\{a_N\}_{N\ge0}$ with respect to $\{N_j\}$ is defined as
 $$\mathcal{O}(a) = \big(\sum_{j=0}^\infty \sup_{N_{j} \leq N< M< N_{j+1}}|a_N-a_M|^2 \big)^{1/2}.$$
Almost all results in this paper are valid equally for oscillation with similar arguments. We leave this part to the interested reader.

\medskip

The paper is organized as follows. In the next section we prove the weighted $q$-variation inequalities and their vector-valued analogues for the differential operators. Our proof of the type $(p,p)$ inequality uses some standard techniques in harmonic analysis. However, because the kernels of the differential operators are not regular, the proof of the weak type $(1,1)$ inequality requires a careful analysis of them. In section~\ref{Singular integrals}, the same weighted inequalities are proved for singular integral operators with regular kernels under the assumption that their associated $q$-variation operators are bounded on $L^p(\real)$ for some $1<p<\8$. The Hilbert transform and Cauchy integral on a Lipschitz curve are such singular integral operators.  The proof is similar to the previous one for differential operators. We would like to emphasize that our proofs of the weak type $(1,1)$ case for both families of operators are simpler than the existing proofs in  similar situations since most of them are divided into two parts by showing separately the corresponding inequalities for the short and long variations; see the proof of the unweighted weak type $(1, 1)$ for differential operators in \cite{kaufman}, and that for the Hilbert transform and singular integrals in \cite{CJRW1, CJRW2}. Section~\ref{sect-vector} is devoted to the vector-valued extension of the results in the preceding two ones. We show there that the previous results also hold for functions with values in $\el^\rho$ for $1<\rho<\8$. These vector-valued variation inequalities are new in the unweighted case too. The last section gives an application to ergodic theory for mean bounded positive invertible operators on $L^p$ with positive inverses.

\medskip

In a subsequent paper we will study higher dimensional  case. Most results of the present paper have higher dimensional analogues.  However, the arguments in the higher dimensional case are often more complicated and technical. On the other hand, we do not know how to extend Theorem~\ref{Thm:variation-Ap+} for the one-sided differential operators to higher dimensions.

\medskip

We  end this introduction by a convention: the symbol $A\lesssim B$ means an inequality up to a constant that may depend on the indices $p, q$, the weights $w$, the kernels $K$, etc. but never on the functions $f$ in consideration.


\section{Differentiable operators}


In this section we study weighted variation inequalities for differential operators. These operators can be defined  both in the discrete and continuous cases. The methods dealing the two  have no major differences. Thus we will focus our attention on the discrete case.   The following is the main result of this section.

\begin{thm}\label{Thm:variation-Ap+}
 Let $q>2$.
\begin{enumerate}[\rm (i)]
  \item Let $1<p<\infty$. The operator $\V_q\A^+$is bounded on $\ell^p(\ent, w)$  if and only if $w \in A^+_p$.
  \item The operator $\V_q\A^+$ maps $\ell^1(\ent, w)$ into $\ell^{1,\infty}(\ent, w)$ if and only if $w \in A^+_1$.
\end{enumerate}
 \end{thm}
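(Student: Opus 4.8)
The plan is to establish the two directions of each equivalence separately, exploiting the fact that the maximal operator $M^+$ is dominated pointwise by $\V_q\A^+$ plus a harmless term. The necessity of the $A^+_p$ (resp. $A^+_1$) condition is the easy half: from the trivial inequality $M^+(f)(n)\le \V_q\A^+(f)(n)+|f(0)|$ (and more precisely the local version obtained by truncating the averages), boundedness of $\V_q\A^+$ on $\el^p(\ent,w)$ forces $M^+$ to be bounded on $\el^p(\ent,w)$, which by Sawyer's theorem \cite{Sawyer} gives $w\in A^+_p$; the weak type $(1,1)$ case is identical with $\el^{1,\8}$ in place of $\el^p$. So the real content is sufficiency.

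For part (i), the strategy is a weighted Calder\'on--Zygmund / good-$\lambda$ type argument adapted to the one-sided setting. First I would record that $\V_q\A^+$ is bounded on unweighted $\el^p(\ent)$ for all $1<p<\8$ (this is Bourgain's theorem as extended in \cite{kaufman}). Then, to pass to $A^+_p$ weights, the cleanest route is an extrapolation-type scheme in the one-sided category: it suffices to prove a good-$\lambda$ inequality relating the level sets of $\V_q\A^+(f)$ to those of $M^+(f)$ with respect to the one-sided maximal operator, using the fact that $\A^+$ has a ``one-sided'' kernel (the average $A^+_N(f)(n)$ only sees $f$ on $[n,n+N]$). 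Concretely, one decomposes $f=f_1+f_2$ over a stopping-time family of intervals on which $M^+(f)$ is controlled, uses the unweighted $\el^2$ (or $\el^p$) bound on the good part $f_1$, and for the bad part $f_2$ exploits cancellation together with the regularity estimate $|A^+_N(f)(n)-A^+_{N'}(f)(n)|$ being small when $n$ is far to the left of the support — this is where the one-sidedness of $A^+_p$ enters, since the relevant averages reach to the right. Combining the good-$\lambda$ inequality with the boundedness of $M^+$ on $\el^p(\ent,w)$ for $w\in A^+_p$ (Sawyer) yields the desired $\el^p(\ent,w)$ bound.

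For part (ii), the weak type $(1,1)$ bound for $w\in A^+_1$, the plan is a one-sided Calder\'on--Zygmund decomposition of $f$ at height $\l$ with respect to the $A^+_1$ weight $w$, splitting $f=g+b$ with $g$ bounded (so the $\el^2(\ent,w)$ bound — deduced from (i) since $A^+_1\subset A^+_2$ — handles $\{\V_q\A^+ g>\l/2\}$) and $b=\sum_j b_j$ supported on disjoint intervals $I_j$ with mean zero. For the bad part, rather than separating short and long variations as in \cite{kaufman}, the intended simplification is to estimate $\V_q\A^+(b_j)(n)$ directly for $n$ outside a suitable dilate of $I_j$ by bounding the total variation of $N\mapsto A^+_N(b_j)(n)$ through a careful pointwise analysis of the kernel differences: because $b_j$ has mean zero, $A^+_N(b_j)(n)$ is nonzero only while the window $[n,n+N]$ partially overlaps $I_j$, and one sums the $v_q$-norm over the $O(|I_j|)$ values of $N$ for which this happens, against the $\el^1(w)$ mass of $b_j$ weighted by the $A^+_1$ constant. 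This is the step I expect to be the main obstacle: the kernel of $\A^+$ is not smooth (it is a normalized indicator), so the usual $|\nabla K|$ estimates are unavailable, and one must instead argue combinatorially about how many dyadic jumps of the averaging sequence can occur as the window crosses $I_j$, while keeping the bookkeeping compatible with the one-sided weight. Once the bad-part estimate $\sum_j \int_{\ent\setminus \tilde I_j}\V_q\A^+(b_j)\,w \lesssim \sum_j \|b_j\|_{\el^1(w)} \lesssim \|f\|_{\el^1(\ent,w)}$ is in hand, Chebyshev and the $A^+_1$ property of $w$ on the exceptional set $\bigcup_j \tilde I_j$ finish the argument.
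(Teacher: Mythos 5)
Your necessity argument and the overall Calder\'on--Zygmund skeleton are fine, but the concrete estimate on which your part (ii) rests is false. You propose to finish the bad part via $\sum_i \int_{\ent\setminus \wt I_i}\V_q\A^+(b_i)\,w \lesssim \sum_i\|b_i\|_{\el^1(w)}$ and then Chebyshev at height $\l$. Even after using the vanishing mean of $b_i$, the one-sided averaging kernel only yields decay of order $(n_i-n)^{-1}$: for $n$ to the left of $I_i=[n_i,n_i+k_i]$ one has $\V_q\A^+(b_i)(n)\lesssim (n_i-n)^{-1}\sum_{j\in I_i}|b_i(j)|$, and this is sharp. Take $b_i=\un_{\{n_i\}}-\un_{\{n_i+1\}}$: then $A^+_N(b_i)(n)=(N+1)^{-1}$ exactly when $N=n_i-n$ and $=0$ otherwise, so $\V_q\A^+(b_i)(n)\approx (n_i-n)^{-1}$ and already for $w\equiv 1\in A_1^+$ the sum over $n\le n_i-k_i$ diverges logarithmically. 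So the classical ``integrate $Tb_i$ outside the dilated interval'' step, which works for kernels with $(n_i-n)^{-2}$ decay after cancellation, cannot close here; counting how many jumps occur as the window crosses $I_i$ does not repair this. What saves the argument (and is how the paper proceeds) is structural: for $n\notin\wt\O$ and any fixed scale $N$ at most one $b_i$ has $A^+_N(b_i)(n)\neq 0$ (one needs $n_i-n\le N<n_i-n+k_i$), hence $\|\A^+(b)(n)\|_{v_q}^q\lesssim \sum_i\|\A^+(b_i)(n)\|_{v_1}^q$, and one applies Chebyshev at exponent $q$, not $1$; the resulting decay $(n_i-n)^{-q}$ is summable, and the $A_1^+$ condition converts $\sum_{n\le n_i-k_i}(n_i-n)^{-q}w(n)$ into $|I_i|^{1-q}w(j)$ for $j\in I_i$, giving $\l^{q-1}\sum_{j\in I_i}|f(j)|w(j)$ per interval.

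For part (i) your good-$\l$/extrapolation sketch leaves the decisive obstacle untouched: one-sided weights are in general wildly non-doubling (e.g.\ $w(n)=2^n\in A_1^+$), so the Coifman--Fefferman good-$\l$ scheme does not apply as stated and you offer no one-sided substitute; moreover the quantity that must be controlled for the far part is the $v_q$-norm of $\A^+(f_2)(n)-\A^+(f_2)(n_0)$ as the base point varies, not smallness of $|A^+_N(f)-A^+_{N'}(f)|$. The paper instead proves the pointwise sharp-function bound $(\V_q\A^+(f))^{+,\sharp}\lesssim M^+_r(f)$ for $r>1$ close to $1$, by splitting $f$ into a local piece (handled by the unweighted $\el^r$ bound of \cite{kaufman} and H\"older) and a right-hand tail whose variation increments are estimated by hand, and then concludes from the one-sided Fefferman--Stein inequality of \cite{MT2} together with the openness of the $A_p^+$ condition ($w\in A^+_{p/r}$ for some $r>1$). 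If you insist on a good-$\l$ route you would need the one-sided $A_\infty^+$ machinery in place of doubling; as written, both halves of your sufficiency argument have gaps, and the one in part (ii) is fatal for the specific estimate you propose.
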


By extrapolation, part (ii) implies part (i). But our proof of (ii) depends on (i). The proof of the theorem requires the following lemma from \cite{MT2}. Let  $ f^{+,\sharp}$ denote  the one-sided sharp maximal function of $f$:
$$ f^{+,\sharp} (n) = \sup_{k\in \mathbb{N}} \frac1{k+1} \sum_{i=n}^{n+k}\big(f(i)- \frac1{k+1} \sum_{j=n+k}^{n +2k}f(j) \big)^+.$$

\begin{lem}\label{Le:malaga}
 Let $1\le p< \infty$ and $w\in \bigcup_{p\ge1} A_p^+$. Then
 \begin{eqnarray*}
 \sum_{n\in\ent} \big(M^+(f)(n)\big)^p w(n) \lesssim \sum_{n\in\ent} |f^{+,\sharp}(n)|^p w(n)
 \end{eqnarray*}
whenever the left hand side is finite.
\end{lem}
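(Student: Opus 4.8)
\textbf{Proof proposal for Lemma~\ref{Le:malaga}.}

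The plan is to reduce the one-sided Fefferman--Stein inequality to a good-$\lambda$ comparison between the one-sided Hardy--Littlewood maximal function $M^+$ and the one-sided sharp maximal function $f^{+,\sharp}$, exactly mirroring the classical argument of Fefferman and Stein but using one-sided intervals $[n,n+k]$ everywhere and the one-sided structure of $A_p^+$ weights. First I would fix $w\in A_p^+$ for some finite $p$ (the hypothesis $w\in\bigcup_{p\ge1}A_p^+$ means exactly this); recall that every such $w$ satisfies a one-sided $A_\infty^+$ condition, i.e. there are constants $C,\d>0$ so that for any interval $I=[a,b]$ and any measurable $E\subset I$ contained in the right half $[(a+b)/2,b]$ (or an appropriate one-sided portion), $w(E)/w(I)\lesssim (|E|/|I|)^\d$ — this is the one-sided reverse-Hölder/$A_\infty^+$ property established by Martín-Reyes, Ortega Salvador and de la Torre, and it is the only property of the weight we shall use. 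I would state it as the one fact about $w$ that the proof consumes.

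The core step is the good-$\lambda$ inequality: there is $\g_0>0$ such that for every $\g\in(0,\g_0)$ and every $\l>0$,
\beq\label{goodlambda-plan}
w\big(\{n: M^+(f)(n)>2\l,\ f^{+,\sharp}(n)\le\g\l\}\big)\ \lesssim\ \g^\d\, w\big(\{n: M^+(f)(n)>\l\}\big).
\eeq
To prove this one performs a Calderón--Zygmund-type decomposition of the \emph{open} set $\O_\l=\{M^+ f>\l\}$ on $\ent$ (which we may assume proper, since the left side of the lemma is finite) into maximal dyadic-type one-sided intervals $I_j$ on which the one-sided average of $|f|$ first exceeds $\l$; the one-sided structure forces that the relevant control average over the ``shifted'' interval to the right of $I_j$ is still comparable to $\l$. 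On each such $I_j$ one shows that if there is a point where $f^{+,\sharp}\le\g\l$ then the portion of $I_j$ where $M^+f>2\l$ has small Lebesgue proportion (of order $\g$) inside a fixed dilate of $I_j$; combining this with the one-sided $A_\infty^+$ estimate quoted above converts the Lebesgue proportion into the weighted proportion $\g^\d$, and summing over $j$ yields \eqref{goodlambda-plan}. Once \eqref{goodlambda-plan} is in hand, one multiplies by $p\l^{p-1}$ and integrates in $\l$ over $(0,\infty)$ against $d\l$ in the usual distribution-function identity; choosing $\g$ small enough that the constant times $\g^\d$ absorbs into the left side (legitimate precisely because we have assumed $\sum_n (M^+f(n))^p w(n)<\infty$, so the quantity being absorbed is finite), we obtain $\sum_n (M^+f(n))^p w(n)\lesssim \sum_n (f^{+,\sharp}(n))^p w(n)$, which is the claim.

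I expect the main obstacle to be the one-sided Calderón--Zygmund decomposition and the bookkeeping of which interval the averages are taken over: unlike the symmetric case, $M^+$ looks forward, $f^{+,\sharp}(n)$ compares the average of $f$ over $[n,n+k]$ with the average over the \emph{disjoint} shifted block $[n+k,n+2k]$, and $A_p^+$ controls $w$ on $[n,n+k]$ by minima (or $L^{-1/(p-1)}$ averages) over $[n+k,n+2k]$ — so all three objects are ``asymmetric'' in a way that must be aligned consistently. Concretely, when localizing to a stopping-time interval $I_j=[a_j,b_j]$ one must pass to a companion interval $\tilde I_j$ lying to the right of $I_j$ of comparable length, verify that the average of $|f-c_j|$ over $I_j$ (for a suitable constant $c_j$ drawn from the shifted block) is dominated by $\g\l$ using $f^{+,\sharp}\le\g\l$ at a single good point of $I_j$, and then invoke the one-sided $A_\infty^+$ inequality for the pair $(I_j\cup\tilde I_j,\ \{M^+f>2\l\}\cap I_j)$. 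Making the geometry of this companion-interval construction match the exponents in the definitions of $f^{+,\sharp}$ and $A_p^+$ is the delicate point; everything else (the distributional integration, the absorption, the reduction to $\O_\l$ proper) is routine once that is set up. Since the statement is attributed to \cite{MT2}, it is legitimate to organize the write-up as: recall the one-sided $A_\infty^+$ property with reference, carry out the one-sided good-$\lambda$ argument, and conclude.
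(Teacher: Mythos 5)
There is nothing in the paper for your argument to be compared against: the authors do not prove this lemma, they import it verbatim from \cite{MT2}, and its proof is due to Mart\'{\i}n-Reyes and de la Torre. Your outline — reduce to a good-$\lambda$ inequality between $M^+$ and $f^{+,\sharp}$ via a one-sided $A_\infty^+$ property, then integrate against the distribution function and absorb, the absorption being legitimate precisely because the left-hand side is assumed finite — is essentially the route of the cited source, so the strategy is the right one.

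As a proof, however, the proposal is incomplete at exactly the point that carries the content. The good-$\lambda$ estimate is only asserted; the stopping-interval construction, the companion interval to the right, and the precise one-sided $A_\infty^+$ statement (which compares $w(E)$, for $E$ inside an interval, with the weight of a suitably placed adjacent interval rather than with $w(I)$ itself) are all deferred as ``the delicate point,'' and these are what distinguish the one-sided case from the classical Fefferman--Stein argument. There is also a concrete misstep: you propose to dominate the average of $|f-c_j|$ over a stopping interval by $\gamma\lambda$ using $f^{+,\sharp}\le\gamma\lambda$ at a single good point, but $f^{+,\sharp}$ controls only the positive part of $f$ minus a forward average, so no bound on $|f-c_j|$ is available from it. The repair is standard and is the very reason the one-sided sharp function suffices: since $M^+$ looks only forward, one writes $f\le (f-c_j)^+ + c_j$ with $c_j$ a forward average that is $\lesssim\lambda$ off the level set, so only the positive part ever enters — but as written that step would fail. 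In short: correct approach, faithful to \cite{MT2}, with the central good-$\lambda$ estimate still to be carried out.
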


The following elementary fact will be also used in the proof.

\begin{lem}\label{basic ineq}
 Let $r>1$ and $(t_j)$ be an increasing sequence of positive numbers. Then
 $$\sum_{j=0}^\8\frac{(t_{j+1}-t_j)^r}{t_{j+1}^rt_j^{r-1}} \lesssim \frac1{t_0^{r-1}}.$$
\end{lem}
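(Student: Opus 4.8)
The plan is to reduce the series, term by term, to a telescoping sum. First I would dispose of the power on the gap in a harmless way: since $t_{j+1}-t_j\le t_{j+1}$ and $r>1$, raising to the power $r-1$ gives $(t_{j+1}-t_j)^{r-1}\le t_{j+1}^{r-1}$, hence $(t_{j+1}-t_j)^r\le (t_{j+1}-t_j)\,t_{j+1}^{r-1}$. Therefore each summand is dominated by $\dfrac{t_{j+1}-t_j}{t_{j+1}\,t_j^{r-1}}$, and it suffices to prove $\sum_{j\ge0}\dfrac{t_{j+1}-t_j}{t_{j+1}\,t_j^{r-1}}\lesssim t_0^{1-r}$.

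The natural telescoping quantity is $t_j^{1-r}-t_{j+1}^{1-r}$, whose sum over $j\ge0$ is $t_0^{1-r}-\lim_N t_N^{1-r}\le t_0^{1-r}$ because the terms are nonnegative. So the heart of the matter is the pointwise comparison
\[
\frac{t_{j+1}-t_j}{t_{j+1}\,t_j^{r-1}}\ \le\ C_r\Big(\frac1{t_j^{r-1}}-\frac1{t_{j+1}^{r-1}}\Big),
\]
with $C_r$ depending only on $r$. Setting $x=t_j/t_{j+1}\in(0,1]$ and multiplying through by $t_j^{r-1}$, this is equivalent to the elementary one-variable inequality $1-x\le C_r\,(1-x^{r-1})$ on $[0,1]$. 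I would verify it via $1-x^{r-1}\ge\min(1,r-1)\,(1-x)$: for $r\ge2$ this is immediate from $x^{r-1}\le x$ on $[0,1]$; for $1<r<2$ it follows because the function $x\mapsto 1-x^{r-1}-(r-1)(1-x)$ vanishes at $x=1$ and has derivative $(r-1)(1-x^{r-2})\le0$ on $(0,1]$, hence is nonnegative there. Thus one may take $C_r=\max\{1,(r-1)^{-1}\}$.

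Combining the three steps,
\[
\sum_{j=0}^{\infty}\frac{(t_{j+1}-t_j)^r}{t_{j+1}^r\,t_j^{r-1}}\ \le\ C_r\sum_{j=0}^{\infty}\Big(\frac1{t_j^{r-1}}-\frac1{t_{j+1}^{r-1}}\Big)\ \le\ \frac{C_r}{t_0^{r-1}},
\]
which is the assertion. There is essentially no obstacle here; the only point needing a line of care is the scalar estimate $1-x^{r-1}\ge\min(1,r-1)(1-x)$ in the range $1<r<2$, which is the short monotonicity check indicated above.
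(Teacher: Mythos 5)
Your proof is correct. The reduction $(t_{j+1}-t_j)^r\le (t_{j+1}-t_j)\,t_{j+1}^{r-1}$, the scalar inequality $1-x\le C_r(1-x^{r-1})$ on $(0,1]$ with $C_r=\max\{1,(r-1)^{-1}\}$ (both cases checked correctly), and the telescoping of $t_j^{1-r}-t_{j+1}^{1-r}$ fit together without gaps, and the constant depends only on $r$, as the statement allows. This is, however, a genuinely different route from the paper's. The paper splits the indices according to whether $t_{j+1}\ge 2t_j$ or $t_{j+1}<2t_j$: for the large-ratio jumps it bounds each term by $t_j^{-(r-1)}$ and sums a geometric series (the values $t_j$ along such indices grow at least geometrically from $t_0$), while for the small-ratio jumps it bounds each term by $(t_{j+1}-t_j)/t_{j+1}^r$ and compares with $\int_{t_0}^\infty t^{-r}\,dt$. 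Your argument replaces this dichotomy by a single pointwise domination of each summand by a telescoping difference, which avoids both the case split and the integral comparison and yields an explicit constant; the paper's argument is the standard ``doubling versus non-doubling'' dissection, which is perhaps more robust as a template (it adapts readily to sums where no exact telescoping antiderivative is available), but for this particular lemma your telescoping comparison is the cleaner of the two.
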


\begin{proof}
 This inequality is easily checked as follows:
 \begin{align*}
 \sum_{j=0}^\8\frac{(t_{j+1}-t_j)^r}{t_{j+1}^rt_j^{r-1}}
 &\lesssim\sum_{j: t_{j+1}\ge 2t_j}\frac{1}{t_j^{r-1}}
 +\sum_{j: t_{j+1}< 2t_j}\frac{t_{j+1}-t_j}{t_{j+1}^r}\\
 &\lesssim\frac{1}{t_0^{r-1}}\sum_{j=0}^\8\frac{1}{2^j}
 +\int_{t_0}^\8\frac{dt}{t^r}\approx \frac{1}{t_0^{r-1}}.
 \end{align*}
 \end{proof}

The following variant for $M^+$ of the  classical Calder\'on-Zygmund decomposition will be crucial for the proof of the weak type $(1, 1)$ inequality in part (ii).

\begin{lem}\label{1-sided CZ}
 Let  $f$ be a finitely supported function on $\ent$ and  $\lambda >0$.  Let  $\O= \{n: M^+f(n) > \lambda\}$. Then $\O$ can be decomposed into finitely many disjoint intervals of integers: $\O = \bigcup_iI_i$ with the following properties
 \begin{enumerate}[$\bullet$]
 \item $|f(n)| \le \lambda$ for all $n\notin\O$;
 \item $\displaystyle |\O| \le \frac1{\lambda}\,\|f\|_1$;
 \item $\displaystyle \lambda < \frac1{|I_i|} \sum_{n\in I_i} |f(n)| \le 2 \lambda$.
 \end{enumerate}
 \end{lem}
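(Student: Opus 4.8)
The plan is to read everything off the structure of $\O=\{M^+f>\l\}$ as a finite union of maximal blocks of consecutive integers. Since $f$ is finitely supported, $M^+f(n)=0$ for $n>\max(\operatorname{supp}f)$, while for $n<\min(\operatorname{supp}f)$ one has $M^+f(n)\le\|f\|_1/(\min(\operatorname{supp}f)-n+1)\to0$; hence $\O$ is a finite set of integers. Write $\O=\bigcup_i I_i$ as the (finite, pairwise disjoint) union of its connected components, so each $I_i=[a_i,b_i]\cap\ent$ with $a_i-1\notin\O$ and $b_i+1\notin\O$. This already gives the required decomposition into finitely many disjoint intervals, and it remains to check the three bullets.

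The first bullet is immediate: if $n\notin\O$ then $|f(n)|=A^+_0(|f|)(n)\le M^+f(n)\le\l$. For the upper estimate in the third bullet, fix a component $I=[a,b]$ and use $a-1\notin\O$, i.e.\ $M^+f(a-1)\le\l$, with the averaging window $[a-1,b]$ (which has $b-a+2$ terms): this gives $\sum_{j=a-1}^{b}|f(j)|\le\l(b-a+2)$, hence
\[
\frac1{|I|}\sum_{j\in I}|f(j)|\le\l\,\frac{b-a+2}{b-a+1}\le2\l .
\]
Finally, the second bullet will follow from the lower estimate in the third by summing: $|\O|=\sum_i|I_i|<\l^{-1}\sum_i\sum_{j\in I_i}|f(j)|\le\l^{-1}\|f\|_1$.

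The heart of the matter is thus the lower bound $\sum_{j\in I}|f(j)|>\l|I|$ for a component $I=[a,b]$; this is the discrete ``rising sun'' phenomenon, and it is the only delicate point. The elementary observation to exploit is that if two \emph{adjacent} blocks of integers each have $|f|$-average exceeding $\l$, then so does their union. Now $a\in\O$, so $M^+f(a)>\l$ produces a $c\ge a$ with $\sum_{j=a}^{c}|f(j)|>\l(c-a+1)$; consider the block $[a,c]$. If $c<b$, then $c+1\in[a,b]=I\subseteq\O$, so $M^+f(c+1)>\l$ produces a block $[c+1,c']$ with $|f|$-average $>\l$, and merging gives a block $[a,c']$ with $|f|$-average $>\l$ and $c'>c$. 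Iterating, the right endpoint strictly increases and remains in the finite set $[a,b-1]$ as long as it is $<b$, so after finitely many steps we reach a block $[a,c]$ with $c\ge b$ and $\sum_{j=a}^{c}|f(j)|>\l(c-a+1)$. If $c=b$ we are done; if $c>b$, then $b+1\notin\O$, i.e.\ $M^+f(b+1)\le\l$, applied with the window $[b+1,c]$ gives $\sum_{j=b+1}^{c}|f(j)|\le\l(c-b)$, and subtracting yields $\sum_{j=a}^{b}|f(j)|>\l(c-a+1)-\l(c-b)=\l(b-a+1)$, as desired.

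To summarize the expected difficulty: the reduction to connected components and the upper estimate are routine one-line arguments, and the weak-type count $|\O|\le\l^{-1}\|f\|_1$ is a formal consequence; the one place requiring care is the lower estimate on each block, handled above by the merging/termination argument. The same scheme, with sums replaced by integrals, yields the continuous analogue of the decomposition that is used later for the singular integral operators in Section~\ref{Singular integrals}.
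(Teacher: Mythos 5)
Your proof is correct and follows the route the paper merely sketches: the standard Calder\'on--Zygmund decomposition adapted to $M^+$, in which the only genuinely one-sided point is the lower bound $\sum_{j\in I}|f(j)|>\lambda|I|$ on each component of $\O$. Your merging/termination argument is in effect a self-contained proof of the discrete analogue of Sawyer's Lemma~2.1, which the paper cites without detail; the remaining steps (connected components, the upper bound via $M^+f(a-1)\le\lambda$, and the bound on $|\O|$) coincide with the standard scheme the paper invokes.
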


\begin{proof}
 Recall that the classical Calder\'on-Zygmund decomposition uses the usual Hardy-Littlewood maximal function $M$ instead of the one-sided $M^+$. However, the standard proof, for instance, that of \cite[Theorem~II.1.2]{gar-rubio}, can be easily modified to the present situation. The only new fact needed is \cite[Lemma~2.1]{Sawyer}. We omit the details.
\end{proof}

\n\emph{Proof of Theorem~\ref{Thm:variation-Ap+}.} For the reason of presentation, we will denote $\V_q\A^+$ simply by $\V_q$  throughout this proof.

 (i)  The necessity is clear by the trivial inequality $M^+(f)\le \V_q(f)+f(0)$ for any $f\ge0$. For the converse direction, we will prove the following inequality
 \begin{equation}\label{Inequality:sharp}
 \left(\V_q(f)\right)^{+,\sharp} \lesssim M^+_r(f)
 \end{equation}
for any finitely supported function $f$ on $\ent$ and for  $r>1$ (sufficiently close to 1), where $M^+_r(f)=\big(M^+(|f|^r)\big)^{1/r}.$
Assuming \eqref{Inequality:sharp}, we easily conclude the sufficiency of (i).  Indeed, by \cite{Sawyer} there exists $r>1$ such that $w$ belongs to $A^+_{p/r}$ too. Thus by \cite{Sawyer} again
 $$ \sum_{n\in\ent} \left(M^+_r(f)(n)\right)^p w(n) \lesssim \sum_{n\in\ent} |f|^p w(n).$$
Then Lemma~\ref{Le:malaga}   and \eqref{Inequality:sharp} imply the desired sufficiency.

Let us prove \eqref{Inequality:sharp}.  Let $f$ be a finitely supported function  on $\ent$ and $n_0\in\ent$. Recall that
 $$\left(\V_q(f)\right)^{+,\sharp}(n_0)=
 \sup_{k\ge1} \frac{1}{k+1} \sum_{i=n_0}^{n_0+k} \Big(\V_q(f)(i)-\frac{1}{k+1}\sum_{j= n_0+k}^{n_0+2k} \V_q(f)(j)\Big)^+.$$
Fix  $k\ge1$. We decompose $f$ as $f=f_1+f_2+f_3$, where $f_1=f\un_{[n_0,\, n_0+3k]}$ and $f_2=f\un_{(n_0+3k,\,\infty)}$. Then
 \begin{align*}
 \frac{1}{k+1}
 &\sum_{i=n_0}^{n_0+k} \Big(\V_q(f)(i) - \frac{1}{k+1}\sum_{j= n_0+k}^{n_0+2k} \V_q(f)(j)\Big)^+\\
 &\leq \frac{1}{k+1} \sum_{n=n_0}^{n_0+k} \big|\mathcal{V}_{q}(f)(n) - \mathcal{V}_{q}(f_2)(n_0)\big|\\
 & \quad\quad +\frac1{k+1} \sum_{n=n_0+k}^{n_0+2k} \big|\mathcal{V}_{q}(f)(n) - \mathcal{V}_{q}(f_2)(n_0)\big|\\
 &\leq \frac{1}{k+1} \sum_{n=n_0}^{n_0+k} \big|\mathcal{V}_{q}(f)(n) - \mathcal{V}_{q}(f_2)(n_0)\big|\\
 & \quad\quad +\frac2{2k+1} \sum_{n=n_0}^{n_0+2k}\big|\mathcal{V}_{q}(f)(n) - \mathcal{V}_{q}(f_2)(n_0)\big|.
 \end{align*}
We only need to estimate the first part of the last sum, the second one being handled similarly (with $2k$ instead of $k$). Noting that $A_N(f_3)(n)=0$ for every $n\geq n_0$, we have
 \begin{align*}
 \frac1{k+1} \sum_{n=n_0}^{n_0+k} \big| \mathcal{V}_{q}(f)(n) - \mathcal{V}_{q}(f_2)(n_0) \big|
 &=\frac1{k+1} \sum_{n=n_0}^{n_0+k} \big| \|\mathcal{A}^+(f)(n)\|_{v_q}  - \|\mathcal{A}^+(f_2)(n_0)\|_{v_q} \big| \\
 &\le \frac1{k+1} \sum_{n=n_0}^{n_0+k}  \|\mathcal{A}^+(f)(n)  - \mathcal{A}^+(f_2)(n_0)\|_{v_q} \\
 &\le  \frac1{k+1} \sum_{n=n_0}^{n_0+k}  \|\mathcal{A}^+(f_1)(n)  \|_{v_q}\\
 &\quad+ \frac1{k+1} \sum_{n=n_0}^{n_0+k}  \|\mathcal{A}^+(f_2)(n)  - \mathcal{A}^+(f_2)(n_0)\|_{v_q} \\
 &\;{\mathop=^{\rm def}}\;  E_1+E_2 .
  \end{align*}
By the H\"older inequality and the $\el^r$-boundedness of $\V_q$ proved in \cite{kaufman}, we get
  \begin{align*}
  E_1
  & \le \big(\frac{1}{k+1} \sum_{n=n_0}^{n_0+k}  \|\mathcal{A}^+(f_1)(n)  \|^r_{v_q} \big)^{1/r}
  \lesssim  \big(\frac1{k+1} \sum_{n\in\, \ent}  |f_1(n)|^r \big)^{1/r} \\
  &=  \big(\frac1{k+1} \sum_{n=n_0}^{n_0+3k}|f(n)|^r)^{1/r} \lesssim M^+_r(f)(n_0).
  \end{align*}
Since $v_r\subset v_q$ contractively (with $r<q$), the corresponding up bound for $E_2$ will follow from the following pointwise estimate:
 \beq\label{II}
 \|\A^+(f_2)(n)-\A^+(f_2)(n_0)\|_{v_r}\lesssim M^+_r(f)(n_0), \quad \forall\; n_0\le n\le n_0+k.
 \eeq
To prove \eqref{II}, fix an increasing sequence $\{N_j\}_{j\geq 0}$ of nonnegative integers. Note that
by the definition of $f_2$, $A^+_{N_0}(f_2)(n)=0$ if $N_0\le 2k$. So we can assume that $N_0> 2k$. Then
 \begin{align*}
 \big(&A^+_{N_{j+1}}(f_2)(n) -A^+_{N_{j+1}}(f_2)(n_0)\big)-\big(A^+_{N_{j}}(f_2)(n)-A^+_{N_{j}}(f_2)(n_0)\big)\\
  &= \frac{1}{N_{j+1}+1}\sum_{i=0}^{N_{j+1}}\big(f_2(n+i)-f_2(n_0+i)\big)
  -\frac{1}{N_{j}+1}\sum_{i=0}^{N_{j}}\big(f_2(n+i)-f_2(n_0+i)\big)\\
  &= \frac{1}{N_{j+1}+1}\sum_{i=0}^{\infty}f_2(i)\big(\un_{[0,~N_{j+1}]}(i-n)- \un_{[0,~N_{j+1}]}(i-n_0)\big)\\
  &\quad -\frac{1}{N_{j}+1}\sum_{i=0}^{\infty}f_2(i)\big(\un_{[0,~N_{j}]}(i-n)- \un_{[0,~N_{j}]}(i-n_0)\big)\\
  &= \frac{1}{N_{j+1}+1}\sum_{i=0}^{\infty}f_2(i)\big(\un_{(N_j,~N_{j+1}]}(i-n)- \un_{(N_j,~N_{j+1}]}(i-n_0)\big)\\
  &\quad -\frac{N_{j+1}-N_j}{(N_{j}+1)(N_{j+1}+1)}\sum_{i=0}^{\infty}f_2(i)\big(\un_{[0,~N_{j}]}(i-n)- \un_{[0,~N_{j}]}(i-n_0)\big)\\
  &\;{\mathop=^{\rm def}}\;\alpha_j - \beta_j.
 \end{align*}
We first deal with  $\a_j$:
 \begin{align*}
 \sum_{j=0}^{\infty}\abs{\alpha_j}^r
 &=\sum_{j=0}^{\infty} \frac{1}{(N_{j+1}+1)^r}
  \big|\sum_{i=0}^{\infty}f_2(i)\big(\un_{(N_j,~N_{j+1}]}(i-n)- \un_{(N_j,~N_{j+1}]}(i-n_0)\big)\big|^r\\
 &=\sum_{j\in J_1} \frac{1}{(N_{j+1}+1)^r}
  \big|\sum_{i=0}^{\infty}f_2(i)\big(\un_{(N_j,~N_{j+1}]}(i-n)- \un_{(N_j,~N_{j+1}]}(i-n_0)\big)\big|^r\\
 &+ \sum_{j\in J_2} \frac{1}{(N_{j+1}+1)^r}
  \big|\sum_{i=0}^{\infty}f_2(i)\big(\un_{(N_j,~N_{j+1}]}(i-n)- \un_{(N_j,~N_{j+1}]}(i-n_0)\big)\big|^r\\
  &\;{\mathop=^{\rm def}}\; F_1+F_2,
 \end{align*}
where
 $$J_1=\big\{j\;:\; N_{j+1}-N_j\le n-n_0\big\}\quad\textrm{and}\quad J_2=\big\{j\;:\; N_{j+1}-N_j> n-n_0\big\}.$$
It is clear that
 $$\big|\un_{(N_j,~N_{j+1}]}(i-n)- \un_{(N_j,~N_{j+1}]}(i-n_0)\big|
 =\un_{(N_j,~N_{j+1}]}(i-n)+ \un_{(N_j,~N_{j+1}]}(i-n_0),\quad \forall\; j\in J_1.$$
Thus by the H\"older inequality
 \begin{align*}
 F_1
  &\lesssim \sum_{j\in J_1} \frac{(N_{j+1}-N_{j})^{r-1}}{(N_{j+1}+1)^r}
  \sum_{i=0}^{\infty}|f_2(i)|^r\big(\un_{(N_j,~N_{j+1}]}(i-n)+ \un_{(N_j,~N_{j+1}]}(i-n_0)\big)\\
 &\le (n-n_0)^{r-1}\sum_{j\in J_1} \frac{1}{(N_{j+1}+1)^r}
  \sum_{i=0}^{\infty}|f_2(i)|^r\big(\un_{(N_j,~N_{j+1}]}(i-n)+ \un_{(N_j,~N_{j+1}]}(i-n_0)\big)
  \end{align*}
The sum containing $\un_{(N_j,~N_{j+1}]}(i-n_0)$ is the particular case of the one containing $\un_{(N_j,~N_{j+1}]}(i-n)$ when $n=n_0$.  Hence, we need only to consider the former.  We have
  \begin{align*}
  \sum_{j\in J_1}& \frac{1}{(N_{j+1}+1)^r}
  \sum_{i=0}^{\infty}|f_2(i)|^r \un_{(N_j,~N_{j+1}]}(i-n)\\
  &\le \sum_{j=0}^{\infty} \frac{1}{(N_{j+1}+1)^r}
  \sum_{i=0}^{\infty}|f_2(i+n)|^r\un_{(N_j,~N_{j+1}]}(i) \\
  &= \sum_{i=0}^{\infty}\frac{1}{(N_{j(i)+1}+1)^r}|f_2(i+n)|^r\\
  &\le \sum_{i=2k}^{\infty}\frac{1}{(N_{j(i)+1}+1)^r}|f(i+n)|^r,
     \end{align*}
where for each $i$, $j(i)$ is the unique $j$ such that $i\in (N_j,~N_{j+1}]$.  The last sum is estimated by standard arguments:
 \begin{align*}
 \sum_{i=2k}^{\infty}\frac{1}{(N_{j(i)+1}+1)^r}\,|f(i+n)|^r
 &\le\sum_{s=1}^\infty \sum_{2^sk\leq i \leq  2^{s+1}k}\frac1{2^{rs}k^{r}}\,|f(i+n)|^r\\
 &\lesssim \sum_{s=1}^\infty \frac{1}{2^{(r-1)s}k^{r-1}}\,
           \big(\frac{1}{2^{s+2}k+1}\sum_{0\le i \leq  2^{s+2}k}\abs{f(i+n_0)}^r\big)\\
 &\le \frac{1}{k^{r-1}}\big(M^+_r(f)(n_0)\big)^r  \sum_{s=1}^\infty \frac{1}{2^{(r-1)s}}\\
 &\lesssim \frac{1}{k^{r-1}}\big(M^+_r(f)(n_0)\big)^r  .
  \end{align*}
Combining the preceding estimates and noting that $n-n_0\le k$, we get
 $$ F_1\lesssim \big(M^+_r(f)(n_0)\big)^r .$$
The second sum $F_2$ is treated in a similar way. First note that
 $$\big|\un_{(N_j,~N_{j+1}]}(i-n)- \un_{(N_j,~N_{j+1}]}(i-n_0)\big|
 =\un_{(n_0+N_j,~n+N_{j}]}(i)+ \un_{(n_0+N_{j+1},~n+N_{j+1}]}(i),\quad\forall\; j\in J_2.$$
Therefore
 \begin{align*}
 F_2
 &\lesssim (n-n_0)^{r-1}\sum_{j\in J_2} \frac{1}{(N_{j+1}+1)^r}
 \sum_{i=0}^{\infty}|f_2(i)|^r\big(\un_{(n_0+N_j,~n+N_{j}]}(i)+ \un_{(n_0+N_{j+1},~n+N_{j+1}]}(i)\big)\\
 &\lesssim (n-n_0)^{r-1}\sum_{j\in J_2} \frac{1}{(N_{j}+1)^r}
 \sum_{i=0}^{\infty}|f_2(i)|^r \un_{(n_0+N_j,~n+N_{j}]}(i)\\
 &\le (n-n_0)^{r-1}\sum_{j\in J_2}  \frac{1}{(N_{j}+1)^r}\sum_{i=0}^{\infty}|f_2(i)|^r \un_{(n_0+N_j,~n_0+N_{j+1}]}(i)\\
 &\le (n-n_0)^{r-1}\sum_{j=0}^\8  \frac{1}{(N_{j}+1)^r}\sum_{i=0}^{\infty}|f_2(i+n_0)|^r \un_{(N_j,~N_{j+1}]}(i).
 \end{align*}
Here for the next to the last inequality we have used the fact that $n+N_j<n_0+N_{j+1}$ for $j\in J_2$. Thus we again find the sum in the reasoning for $F_1$. Hence
 $$ F_2\lesssim \big(M^+_r(f)(n_0)\big)^r  .$$
Combining this estimate and the previous one for $F_1$, we get
 \beq\label{estimate-a}
 \big(\sum_{j=0}^{\infty}\abs{\alpha_j}^r\big)^{1/r} \lesssim M^+_r(f)(n_0).
 \eeq

We pass to handling $\b_j$.
 $$\beta_j=\frac{N_{j+1}-N_j}{N_{j+1}N_j}\sum_{i=0}^{\infty}f_2(i)\left(\un_{[0,~N_{j}]}(i-n)-\un_{[0,~N_{j}]}(i-n_0)\right).$$
Recall that $N_0> 2k$. So $n-n_0\le k<N_j$. Consequently,
 $$\big|\un_{[0,~N_j]}(i-n)-\un_{[0,~N_j]}(i-n_0)\big|
 =\un_{[n_0,~n)}(i)+\un_{(n_0+N_j,~n+N_j]}(i).$$
On the other hand, if $i\in [n_0,n]$,   $f_2(i)=0$. Thus we have
 \begin{align*}
 \sum_{j=0}^\8|\beta_j|^r
 &\lesssim (n-n_0)^{r-1}\sum_{j=0}^\8\frac{(N_{j+1}-N_j)^r}{N_{j+1}^rN_j^r}
 \sum_{i=0}^\8|f_2(i)|^r\un_{(n_0+N_j,~n+N_j]}(i)\\
 &\lesssim (n-n_0)^{r-1}\sum_{j=0}^\8\frac{(N_{j+1}-N_j)^r}{N_{j+1}^rN_j^r}
 (N_j+n-n_0+1)\big(\frac{1}{N_j+n-n_0+1}\sum_{i=n_0}^{n+N_j}|f(i)|^r\big)\\
 &\lesssim M^+(|f|^r)(n_0)k^{r-1}\sum_{j=0}^\8\frac{(N_{j+1}-N_j)^r}{N_{j+1}^rN_j^{r-1}}.
 \end{align*}
Here for the last inequality we have used the fact that $N_j>k\ge n-n_0$. Hence by Lemma~\ref{basic ineq}, we conclude that
 \beq\label{estimate-b}
 \big(\sum_{j=0}^\8|\beta_j|^r\big)^{1/r}\lesssim M^+_r(f)(n_0) .
 \eeq
 \eqref{estimate-a} and \eqref{estimate-b}yield
 $$\big(\sum_j\big|A_{N_{j+1}}f_2(n) -A_{N_{j+1}}f_2(n_0)\big)-\big(A_{N_{j}}f_2(n)-A_{N_{j}}f_2(n_0)\big|^r\big)^{1/r}
 \lesssim M^+_r(f)(n_0),$$
which implies \eqref{II} by taking the supremum over all increasing sequences $(N_j)$. Together with the first part of the proof, we then get \eqref{Inequality:sharp}. Thus the sufficiency of part (i)  is proved.

\medskip

(ii) Again, it suffices to prove the sufficiency. This proof  is based on Lemma~\ref{1-sided CZ}. Let  $f$ be a finitely supported function on $\ent$ and  $\lambda >0$.  Using that lemma we decompose  $f$ into its good and bad parts: $f=g+b$ with
 \begin{align*}
 &g=f \textrm{ on } \O^c\quad\textrm{and}\quad  g= \frac1{|I_i|} \sum_{j\in I_i} f(j) \, \textrm{ on } I_i  \textrm{ for each }i,\\
 &b= \sum_i b_i, \textrm{ where } b_i = \big(f -\frac1{|I_i|}\sum_{j\in I_i} f(j) \big) \un_{I_i} .
  \end{align*}
Moreover,
 \begin{enumerate}[$\bullet$]
 \item $\|g\|_\8 \le 2 \lambda$  and $\|g\|_1 \le \|f\|_1$;
\item for each $i$, $\displaystyle  \sum_{j\in\ent} b_i(j) = 0 $ and $\displaystyle \frac1{|I_i|}\sum_{j\in\ent} |b_i(j)|\le 4\lambda$.
 \end{enumerate}
We have
 $$w(\{n: \V_{q}(f) (n) > \lambda \})\le w(\{n :\mathcal{V}_{q}(g)(n) >\frac\lambda2 \})
 + w(\{n: \mathcal{V}_{q}(b) (n)> \frac\lambda2 \}).$$
The good part is easy to be estimated. Using part (i) for $p=2$  and the properties of  $g$, we obtain
 \begin{align*}
 w(\{n :\mathcal{V}_{q}(g)(n)  >\frac\lambda2\})
 &\le \frac{4}{\lambda^2}\sum_{n\in\ent} (\mathcal{V}_{q}(g))^2(n)w(n) \lesssim \frac{1}{\lambda^2} \sum_{n\in\ent} |g(n)|^2w(n)\\
  &\lesssim\frac1{\lambda} \sum_{n\in\O^c} |g(n)|w(n) +w(\O)\lesssim\frac1{\lambda} \sum_{n\in\ent} |f(n)|w(n),
 \end{align*}
where for the last inequality we have used the weak type $(1, 1)$ boundedness of $M^+$ for $A_1^+$ weights:
 \beq\label{weak}
 w(\O)\lesssim \frac1{\lambda} \sum_{n\in\ent} |f(n)|w(n).
 \eeq

We turn to treat the bad part $b$. Let $I_i=[n_i,\, n_i+k_i]$ and $\wt\O= \bigcup_i [n_i-k_i,\, n_i+k_i]$. Then
 \beq\label{b}
 w(\{n :\mathcal{V}_{q}(b)(n))> \frac\lambda2 \}) \le w (\wt\O) +
 w( \{n :n \notin \wt\O, \mathcal{V}_{q}(b) (n) > \frac\lambda2 \}).
 \eeq
The first term on the right hand side is estimated by the doubling property of $w$ and \eqref{weak}:
 \beq\label{b1}
 w (\wt\O)\lesssim \sum_i w([n_i,\, n_i+k_i]) = w(\O)
 \lesssim \frac1{\lambda} \sum_{n\in\ent} |f(n)|w(n).
 \eeq
The main part of the proof is on the second term. Let $n\notin \wt\O$. Since $b_i$ is supported on $[n_i,n_i+k_i]$ and of vanishing mean, $A^+_N(b_i)(n)=0$ if $n+N\notin [n_i,~n_i+k_i)$. Consequently, there exists at most one $i$ such that $A^+_{N}(b_{i})(n)\neq 0$. Now let $\{N_j\}_{j\ge 0}$ be  an increasing sequence. Then for every fixed $j$ there exist at most two $i$ and $i'$ such that $A^+_{N_j}(b_{i})(n)\neq 0$ and $A^+_{N_{j+1}}(b_{i'})(n)\neq 0$. Thus
 \begin{align*}
 \sum_j\big|A^+_{N_{j}}(b)(n)-A^+_{N_{j+1}}(b)(n)\big|^q &=
 \sum_j\big|A^+_{N_j}(b_{i})(n)-A^+_{N_{j+1}}(b_{i'})(n)\big|^q\\
 &\leq 2^q\sum_j\sum_i\big|A^+_{N_j}(b_{i})(n)-A^+_{N_{j+1}}(b_{i})(n)\big|^q\\
 &= 2^q\sum_i\sum_j\big|A^+_{N_j}(b_{i})(n)-A^+_{N_{j+1}}(b_{i})(n)\big|^q.
 \end{align*}
Whence
 $$\big\|\A^+(b)(n)\big\|_{v_q}^q\lesssim\sum_i\big\|\A^+(b_i)(n)\big\|_{v_1}^q.$$
Hence
 \beq\label{b2}
  w( \{n :n \notin\wt\O, \mathcal{V}_{q}(b) (n) > \frac\lambda2 \})
 \lesssim \frac{1}{\l^q}\sum_i\sum_{n\in \wt\O^c}\big\|\A^+(b_i)(n)\big\|_{v_1}^q w(n).
 \eeq
For any $n\notin\wt\O$,
 $$\big\|\A^+(b_i)(n)\big\|_{v_1}=\sum_{N\geq 0}\big|A^+_{N+1}(b_i)(n)-A^+_{N}(b_i)(n)\big|.$$
Note  that if $A^+_N (b_i)(n)\neq 0$, then $n_i-n\leq N < n_i-n+k_i$. We thus get
 \begin{align*}
 \big\|\A^+(b_i)(n)\big\|_{v_1}
 &=|A^+_{n_i-n}(b_i)(n)|+\sum_{N=n_i-n}^{n_i+k_i-n-2}\big|A^+_{N+1}(b_i)(n)-A^+_{N}(b_i)(n)\big| +\big|A^+_{n_i-n+k_i-1}(b_i)(n)\big|\\
 &=\frac{|b_i(n_i)|}{n_i-n+1}+\sum_{N=n_i-n}^{n_i+k_i-n-2}\big|A^+_{N+1}(b_i)(n)-A^+_{N}(b_i)(n)\big|+\frac{|b_i(n_i+k_i)|}{n_i-n+k_i},
 \end{align*}
However,
 \begin{align*}
 A^+_{N+1}(b_i)(n)-A^+_{N}(b_i)(n)
 &=\frac{1}{N+2}\sum_{p}b_i(j)\un_{[0,~N+1]}(j-n)-\frac{1}{N+1}\sum_{j}b_i(j)\un_{[0,~N]}(j-n)\\
     &=\frac{1}{N+2} b_i(n+N+1) - \frac{1}{(N+1)(N+2)} \sum_{j}b_i(j)\un_{[0,~N]}(j-n).
 \end{align*}
Since $N\ge n_i-n\ge k_i$ (recalling that  $n\notin \wt\O$), we then deduce
 \begin{eqnarray}\label{v1}
 \begin{array}{ccl}
  \begin{split} \begin{displaystyle}
 \big\|\A^+(b_i)(n)\big\|_{v_1}  \end{displaystyle}
 &\le \begin{displaystyle}\frac1{n_i-n}\sum_{N=n_i-n}^{n_i-n+k_i}|b_i(n+N)|\end{displaystyle}\\
 &\begin{displaystyle}\quad+\frac{1}{(n_i-n)^2}\sum_{N=n_i-n}^{n_i+k_i-n-2}\big|\sum_{j}b_i(j)\un_{[0,~N]}(j-n)\big|\end{displaystyle}\\
 &\lesssim \begin{displaystyle}\frac{1}{n_i-n}\sum_{j\in I_i} |b_i(j)| \end{displaystyle}.
 \end{split} \end{array}
 \end{eqnarray}
Hence,
 \begin{align*}
 \sum_{n\in {\wt\O^c}}\big\|\A^+(b_i)(n)\big\|_{v_1}^q w(n)
 &\lesssim\sum_{n\leq n_i-k_i}\Big[\frac{1}{n_i-n}\sum_{j\in I_i} |b_i(j)|\Big]^q w (n)\\
 &=\sum_{n\leq n_i-k_i} \frac{|I_i|^{q-1}}{(n_i-n)^q}
 \Big[\frac{1}{|I_i|}\sum_{j\in I_i} |b_i(j)|\Big]^{q-1}\Big[\sum_{j\in I_i} |b_i(j)|\Big] w (n)\\
 &\lesssim (\lambda |I_i|)^{q-1}\sum_{n\leq n_i-k_i} \frac{1}{(n_i-n)^q}w (n)
 \sum_{j\in I_i}|f(j)|.
 \end{align*}
The last double sum is estimated as follows:
 \begin{align*}
 \sum_{n\leq n_i-k_i} \frac{1}{(n_i-n)^q}w (n)\sum_{j\in I_i}|f(j)|
 &=\sum_{s=0}^\8\sum_{2^s|I_i|\le n_i-n<2^{s+1}|I_i|}\frac{1}{(n_i-n)^q}w (n)\sum_{j\in I_i}|f(j)|\\
 &\lesssim\frac1{|I_i|^{q-1}}\sum_{s=0}^\8 \frac{1}{2^{s(q-1)}} \sum_{j\in I_i}|f(j)|
 \Big[\frac{1}{|I_i| 2^{s+1}}\sum_{n_i-|I_i|2^{s+1}<n\le n_i}w (n)\Big].
 \end{align*}
Since $w$ is an $A_1^+$ weight, for any $j\in I_i=[n_i,~n_i+k_i]$ and any $s\ge0$, we have
 $$\frac{1}{|I_i| 2^{s+1}}\sum_{n_i-|I_i|2^{s+1}<n\le n_i}w (n)\lesssim w(j).$$
Therefore,
 $$\sum_{n\in {\wt\O^c}}\big\|\A^+(b_i)(n)\big\|_{v_1}^q w(n)\lesssim
 \l^{q-1}\sum_{j\in I_i}|f(j)| w(j).$$
Together with \eqref{b2}, this inequality implies
 $$w( \{n :n \notin \wt\O, \mathcal{V}_{q}(b) (n) > \frac\lambda2 \})
 \lesssim \frac1\l\sum_{j\in I_i}|f(j)|w(j).$$
Combining this with \eqref{b} and \eqref{b1}, we obtain
 $$w(\{n :\mathcal{V}_{q}(b)(n))> \frac\lambda2 \}) \lesssim\frac1\l\sum_{j\in I_i}|f(j)| w(j).$$
Along with the estimate on the good part $g$ at the beginning of the proof, this last inequality yields the desired weak type $(1, 1)$ inequality:
  $$w(\{n :\mathcal{V}_{q}(f)(n))> \lambda \}) \lesssim\frac1\l\sum_{n\in\ent}|f(n)| w(n).$$
 Thus the proof of the theorem is complete.
\cqd

\medskip

By symmetry we have the following analogue of Theorem~\ref{Thm:variation-Ap+} for $A^-_p$ weights. The definition of $A^-_p$ weights is similar to that of $A^+_p$ weights given in the introduction (see \cite{Sawyer} for more information).   Let $\A^- (f)(n)=\{A^-_N(f)(n)\}_{N\ge0}$, where
$$A^-_N(f)(n)=\frac{1}{N+1}\sum_{i=0}^{N}f(n-i).$$

\begin{cor}\label{Thm:variation-Ap-}
 Let $q>2$. Then the operator $\V_q\A^-$ maps  $\ell^p(\ent, w)$ into  $\ell^p(\ent, w)$ if and only if
   $w \in A^-_p$ with $1<p<\8$, and  $\ell^1(\ent, w)$ into $\ell^{1,\infty}(\ent, w)$ if and only if $w \in A^-_1$.
 \end{cor}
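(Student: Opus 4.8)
The plan is to deduce Corollary~\ref{Thm:variation-Ap-} from Theorem~\ref{Thm:variation-Ap+} by a simple reflection argument, so that essentially no new work is required. Define the reflection operator $R$ acting on functions on $\ent$ by $(Rf)(n)=f(-n)$. The key observation is that $\A^-$ and $\A^+$ are intertwined by $R$: for every $N\ge0$ and every $n\in\ent$,
$$A^-_N(f)(n)=\frac1{N+1}\sum_{i=0}^N f(n-i)=\frac1{N+1}\sum_{i=0}^N (Rf)(-n+i)=A^+_N(Rf)(-n),$$
hence $\A^-(f)(n)=\A^+(Rf)(-n)$ as sequences indexed by $N$, and therefore, applying the $v_q$-norm,
$$\V_q\A^-(f)(n)=\big\|\A^-(f)(n)\big\|_{v_q}=\big\|\A^+(Rf)(-n)\big\|_{v_q}=\big(\V_q\A^+(Rf)\big)(-n)=R\big(\V_q\A^+(Rf)\big)(n).$$

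Next I would record how $R$ interacts with weighted norms and with the $A_p$ classes. Since $R$ is an isometric bijection of $\ent$ onto itself, for any weight $w$ one has $\|Rh\|_{\ell^p(\ent,w)}=\|h\|_{\ell^p(\ent,Rw)}$ and likewise for the weak $\ell^{1,\infty}$ quasinorm, where $(Rw)(n)=w(-n)$. Moreover, comparing the defining inequalities in the introduction, reflecting an interval $[n,n+k]$ to $[-n-k,-n]$ turns a forward average into a backward one; concretely $w\in A_p^+$ if and only if $Rw\in A_p^-$, and symmetrically $w\in A_p^-$ if and only if $Rw\in A_p^+$, for all $1\le p<\infty$ (this is exactly the symmetry between the two one-sided classes alluded to in \cite{Sawyer}).

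With these two ingredients the corollary is immediate. Suppose $w\in A_p^-$ with $1<p<\infty$; then $Rw\in A_p^+$, so by Theorem~\ref{Thm:variation-Ap+}(i) the operator $\V_q\A^+$ is bounded on $\ell^p(\ent,Rw)$, and consequently
$$\|\V_q\A^-(f)\|_{\ell^p(\ent,w)}=\|R(\V_q\A^+(Rf))\|_{\ell^p(\ent,w)}=\|\V_q\A^+(Rf)\|_{\ell^p(\ent,Rw)}\lesssim\|Rf\|_{\ell^p(\ent,Rw)}=\|f\|_{\ell^p(\ent,w)}.$$
The weak type $(1,1)$ statement follows in the same way from Theorem~\ref{Thm:variation-Ap+}(ii), replacing the $\ell^p$ norm by the $\ell^{1,\infty}$ quasinorm throughout and using $Rw\in A_1^+$ when $w\in A_1^-$. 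For the converse directions, the necessity of the weight condition is obtained either by running the same reflection argument backwards, or directly from the pointwise bound $M^-(f)\le\V_q\A^-(f)+f(0)$ for $f\ge0$ together with Sawyer's characterization of $A_p^-$ by the boundedness of the backward maximal operator $M^-$.

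There is no real obstacle here: the only point that needs a line of care is the bookkeeping of indices in the identity $\A^-(f)(n)=\A^+(Rf)(-n)$ and the verification that the reflection matches $A_p^+$ with $A_p^-$; both are routine once the definitions are written out, which is why the statement is phrased as a corollary and its proof can reasonably be left to the reader as the authors do.
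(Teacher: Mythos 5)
Your reflection argument is correct and is precisely the symmetry the paper invokes when it states the corollary "by symmetry" without further detail: the identity $A^-_N(f)(n)=A^+_N(Rf)(-n)$, the equivalence $w\in A^-_p \Leftrightarrow Rw\in A^+_p$, and the transfer of the strong and weak type bounds (plus the necessity via $M^-\le \V_q\A^-(f)+A^+_0$-type pointwise bound and Sawyer's characterization) are exactly what is intended. No discrepancy with the paper's approach.
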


Recall that a positive function $w$ on $\ent$ belongs to $A_p$ if and only if it belongs to both $A_p^+$ and $A_p^-$ (see \cite{MT1}). Thus Theorem \ref{Thm:variation-Ap+} and Corollary \ref{Thm:variation-Ap-} imply the following

\begin{cor}\label{Thm:variation-Ap}
 Let $q>2$ and $\A (f)(n)=\{A_N(f)(n)\}_{N\ge0}$, where
$$A_N(f)(n)=\frac{1}{2N+1}\sum_{i=-N}^{N}f(n+i).$$
Then the operator $\V_q\A$ is bounded on $\ell^p(w)$ if and only if
   $w \in A_p$ with $1<p<\8$, and  from $\ell^1(\ent, w)$ into $\ell^{1,\infty}(\ent, w)$ if and only if $w \in A_1$.
\end{cor}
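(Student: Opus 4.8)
The plan is to deduce Corollary~\ref{Thm:variation-Ap} directly from Theorem~\ref{Thm:variation-Ap+} and Corollary~\ref{Thm:variation-Ap-}, exactly as the sentence preceding the statement suggests. The bridge is the decomposition of the symmetric average $A_N(f)(n)$ into a forward and a backward one-sided piece together with the weight identity $A_p = A_p^+\cap A_p^-$ recalled from \cite{MT1}. Concretely, one writes, for $N\ge0$,
\beq\label{split-sym}
A_N(f)(n)=\frac{1}{2N+1}\sum_{i=-N}^{N}f(n+i)
 =\frac{N+1}{2N+1}\,A^+_N(f)(n)+\frac{N+1}{2N+1}\,A^-_N(f)(n)-\frac{f(n)}{2N+1}.
\eeq
The last term contributes $\{f(n)/(2N+1)\}_{N\ge0}$, a sequence whose $v_q$-norm is dominated by a constant times $|f(n)|$ (its successive differences telescope against $\sum_N(2N+1)^{-1}(2N+3)^{-1}<\8$), so it is controlled in every $\ell^p(w)$ and in weak-$\ell^1(w)$ by the trivial pointwise bound, for any weight.

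The remaining two terms are not themselves one-sided averages because of the smoothly varying coefficient $c_N:=\frac{N+1}{2N+1}$, so the first step is to remove that coefficient. Since $c_N\to\frac12$ and $c_N$ is monotone with $\sum_N|c_{N+1}-c_N|<\8$, the map $\{b_N\}\mapsto\{c_Nb_N\}$ is bounded on $v_q$: indeed
$\|\{c_Nb_N\}\|_{v_q}\le \sup_N|c_N|\,\|\{b_N\}\|_{v_q}+\big(\sum_N|c_{N+1}-c_N|^q\big)^{1/q}\sup_N|b_N|
\lesssim \|\{b_N\}\|_{v_q}$, using $\sup_N|b_N|\le|b_0|+\|\{b_N\}\|_{v_q}$ and $b_0=A^{\pm}_0(f)(n)=f(n)$. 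Hence $\V_q\A(f)(n)\lesssim \V_q\A^+(f)(n)+\V_q\A^-(f)(n)+|f(n)|$ pointwise. For the converse direction (necessity of the $A_p$ condition) one simply notes the trivial pointwise lower bound $M(f)(n)\le \V_q\A(f)(n)+|f(n)|$ for $f\ge0$, together with an averaging comparison showing $M$ bounded on $\ell^p(w)$ forces $w\in A_p$ — this is Muckenhoupt's theorem, available from our general reference \cite{gar-rubio} and needs no new argument.

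With the pointwise domination in hand, the sufficiency is immediate: if $w\in A_p$ with $1<p<\8$, then $w\in A_p^+$ and $w\in A_p^-$, so by Theorem~\ref{Thm:variation-Ap+}(i) and Corollary~\ref{Thm:variation-Ap-} each of $\V_q\A^+$ and $\V_q\A^-$ is bounded on $\ell^p(\ent,w)$, and the elementary term $|f(n)|$ is trivially bounded on $\ell^p(\ent,w)$; adding the three gives boundedness of $\V_q\A$ on $\ell^p(\ent,w)$. Likewise, if $w\in A_1$ then $w\in A_1^+\cap A_1^-$, so Theorem~\ref{Thm:variation-Ap+}(ii) and Corollary~\ref{Thm:variation-Ap-} give that $\V_q\A^+$ and $\V_q\A^-$ map $\ell^1(\ent,w)$ into $\ell^{1,\8}(\ent,w)$; combining with the pointwise bound $\V_q\A(f)\lesssim \V_q\A^+(f)+\V_q\A^-(f)+|f|$ and the (sub)additivity of weak-$L^1$ quasinorms yields the weak type $(1,1)$ bound for $\V_q\A$. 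I expect no genuine obstacle here: the only mildly technical point is the bookkeeping in \eqref{split-sym} and the verification that multiplication by the slowly-varying sequence $c_N$ preserves $v_q$; both are elementary. The substance of the corollary lies entirely in Theorem~\ref{Thm:variation-Ap+}, which has already been proved.
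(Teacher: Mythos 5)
Your argument is correct and is essentially the paper's own route: the corollary is presented there as an immediate consequence of Theorem~\ref{Thm:variation-Ap+}, Corollary~\ref{Thm:variation-Ap-} and the identity $A_p=A_p^+\cap A_p^-$ from \cite{MT1}, and your decomposition of $A_N$ into the two one-sided averages plus the harmless term $f(n)/(2N+1)$, together with the pointwise bound $\V_q\A(f)\lesssim \V_q\A^+(f)+\V_q\A^-(f)+|f|$, just fills in the details the paper leaves implicit. One cosmetic point: the multiplier estimate for $c_N=\frac{N+1}{2N+1}$ should use the $v_1$-variation $\sum_N|c_{N+1}-c_N|=\frac12$, which dominates $\big(\sum_j|c_{N_{j+1}}-c_{N_j}|^q\big)^{1/q}$ along every subsequence, rather than $\big(\sum_N|c_{N+1}-c_N|^q\big)^{1/q}$; this is precisely the finiteness condition you invoke, so the argument stands as written.
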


All results in this section equally hold for $\real$ instead of $\ent$ with essentially the same arguments.  For a locally integrable function on $\real$ and $t>0$ we define
 $$A^+_tf(x)=\frac1t\int_0^tf(x+s)ds$$
and $\A^+(f)(x)=\{A^+_tf(x)\}_{t>0}$. Here we use the same notation as in the discrete case but this should not cause any ambiguity in concrete contexts. Accordingly, define
 $$A^-_tf(x)=\frac1t\int_0^tf(x-s)ds,\quad A_tf(x)=\frac1{2t}\int_{-t}^tf(x+s)ds$$
and  $\A^-(f)(x)=\{A^-_tf(x)\}_{t>0}$,  $\A(f)(x)=\{A_tf(x)\}_{t>0}$.

\begin{thm}\label{Thm:variation-Ap cont}
 Let $q>2$. The operator $\V_q\A$  is bounded on  $L^p(\real, w)$  if and only if   $w \in A_p$ with $1<p<\8$, and from $L^1(\real, w)$ into $L^{1.\8}(\real, w)$ if and only if $w \in A_1$.\\
 A similar statement holds for the two one-sided differential operators $\A^+$ and $\A^-$.
\end{thm}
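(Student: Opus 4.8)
\noindent\emph{Proof plan.}
The plan is to transcribe the discrete proof of Theorem~\ref{Thm:variation-Ap+} (and Corollaries~\ref{Thm:variation-Ap-}--\ref{Thm:variation-Ap}) to $\real$, replacing sums by integrals and the index $N$ by the continuous parameter $t>0$. Two reductions come first. Since $A_tf=\tfrac12\bigl(A^+_tf+A^-_tf\bigr)$, one has $\V_q\A(f)\le\tfrac12\bigl(\V_q\A^+(f)+\V_q\A^-(f)\bigr)$ pointwise, and $w\in A_p$ if and only if $w\in A_p^+\cap A_p^-$; hence it suffices to treat $\A^+$ (the case of $\A^-$ being symmetric). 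For the necessity, $A^+_tf(x)\to f(x)$ a.e.\ as $t\to0^+$, so for $f\ge0$
\[
A^+_tf(x)\le\bigl|A^+_tf(x)-\lim_{s\to0^+}A^+_sf(x)\bigr|+f(x)\le\V_q\A^+(f)(x)+f(x)\quad\text{a.e.},
\]
whence $M^+f\le\V_q\A^+(f)+f$ a.e.; thus boundedness of $\V_q\A^+$ on $L^p(\real,w)$, resp.\ from $L^1(\real,w)$ into $L^{1,\8}(\real,w)$, forces the same for $M^+$, i.e.\ $w\in A^+_p$, resp.\ $w\in A^+_1$, by Sawyer's theorem \cite{Sawyer}.

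For the sufficiency of the $L^p$ part I would prove the continuous analogue of \eqref{Inequality:sharp}, $\bigl(\V_q\A^+(f)\bigr)^{+,\sharp}\lesssim M^+_r(f)$ for $r>1$ close to $1$, where now the one-sided sharp and maximal functions are taken on $\real$. The argument copies the discrete one: fixing $x_0$ and $h>0$, split $f=f_1+f_2+f_3$ with $f_1=f\un_{[x_0,x_0+3h]}$, $f_2=f\un_{(x_0+3h,\8)}$; control $E_1$ by H\"older's inequality and the unweighted $L^r(\real)$-boundedness of $\V_q\A^+$ (the continuous counterpart of \cite{kaufman}); reduce $E_2$ to the pointwise bound $\bigl\|\A^+(f_2)(x)-\A^+(f_2)(x_0)\bigr\|_{v_r}\lesssim M^+_r(f)(x_0)$ for $x_0\le x\le x_0+h$, which one gets exactly as in \eqref{estimate-a}--\eqref{estimate-b}: for an increasing sequence $\{t_j\}$ with $t_0>2h$, write the $j$-th increment of $A^+_tf_2(x)-A^+_tf_2(x_0)$ as $\alpha_j-\beta_j$ with the obvious continuous definitions, split the indices according to whether $t_{j+1}-t_j\le x-x_0$ or not, and apply Lemma~\ref{basic ineq} (already stated for continuous increasing sequences) to the $\beta_j$-part. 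Then the $\real$-version of Lemma~\ref{Le:malaga} together with the weighted $L^p$-bound for $M^+_r$ (valid for $w\in A^+_p$ after shrinking $r$, by \cite{Sawyer}) gives (i).

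For the weak type $(1,1)$ sufficiency the scheme is identical. Use the $\real$-analogue of the one-sided Calder\'on--Zygmund decomposition of Lemma~\ref{1-sided CZ}, splitting $\{M^+f>\lambda\}=\bigcup_iI_i$ into disjoint intervals $I_i=[a_i,a_i+|I_i|]$ and $f=g+b$, $b=\sum_ib_i$, with $b_i$ supported on $I_i$, $\int b_i=0$, $|I_i|^{-1}\int_{I_i}|b_i|\lesssim\lambda$. The good part is handled by part (i) at $p=2$ (note $A^+_1\subset A^+_2$) and the weak $(1,1)$ bound for $M^+$ on $A^+_1$ weights. For the bad part, put $\wt\O=\bigcup_i[a_i-|I_i|,\,a_i+|I_i|]$; then $w(\wt\O)$ is dominated via the doubling of the $A^+_1$ weight and the weak bound for $M^+$. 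On $\wt\O^c$, since $\int b_i=0$ the map $t\mapsto A^+_tb_i(x)$ is nonzero only for $t$ in an interval of length $\le|I_i|$, and these intervals are disjoint in $i$ (the $I_i$ being disjoint), so for each $t$ at most one $b_i$ is active and for each consecutive pair $t_j<t_{j+1}$ at most two, giving $\bigl\|\A^+(b)(x)\bigr\|_{v_q}^q\lesssim\sum_i\bigl\|\A^+(b_i)(x)\bigr\|_{v_1}^q$. Finally, for $x\notin\wt\O$ the map $t\mapsto A^+_tb_i(x)$ is absolutely continuous with $\pa_tA^+_tb_i(x)=t^{-1}\bigl(b_i(x+t)-A^+_tb_i(x)\bigr)$ and vanishes at $0$ and $\8$, so $\bigl\|\A^+(b_i)(x)\bigr\|_{v_1}=\int_0^\8\bigl|\pa_tA^+_tb_i(x)\bigr|\,dt\lesssim(a_i-x)^{-1}\int_{I_i}|b_i|$ (the continuous form of \eqref{v1}); integrating against $w$ over $x\le a_i-|I_i|$ and using the $A^+_1$ condition as in the discrete proof yields $w(\{x\notin\wt\O:\V_q\A^+(b)(x)>\lambda/2\})\lesssim\lambda^{-1}\sum_i\int_{I_i}|f|w$, whence (ii).

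The one point needing real (if minor) care, and the only genuine difference from the discrete proof, is that the averaging kernel $t^{-1}\un_{[0,t]}$ is not a regular kernel, so neither the sharp-function splitting nor the total-variation estimate for $A^+_tb_i(x)$ can be quoted from Calder\'on--Zygmund theory; both must be carried out by the elementary integral computations indicated above, which are the verbatim analogues of \eqref{estimate-a}--\eqref{estimate-b} and \eqref{v1}. Everything else --- Lemmas~\ref{basic ineq} and \ref{Le:malaga}, Sawyer's weighted bounds, and the reductions $\A^-\leftrightarrow\A^+$ and one-sided $\Rightarrow$ symmetric --- is used off the shelf.
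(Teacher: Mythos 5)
Your proposal is correct and is essentially what the paper intends: the paper itself gives no separate proof of the continuous theorem, stating only that ``All results in this section equally hold for $\real$ instead of $\ent$ with essentially the same arguments,'' and your transcription faithfully carries this out, including the two genuinely new (if minor) continuous-case points — using $A^+_tf\to f$ a.e.\ as $t\to0^+$ for the necessity, and computing $\|\A^+(b_i)(x)\|_{v_1}$ via $\partial_tA^+_tb_i(x)=t^{-1}\bigl(b_i(x+t)-A^+_tb_i(x)\bigr)$ in place of the discrete telescoping in \eqref{v1}. The only thing worth noting is that the necessity for the symmetric operator $\V_q\A$ does not follow from the identity $A_t=\tfrac12(A^+_t+A^-_t)$ (that only gives sufficiency); it needs the separate trivial bound $Mf\le\V_q\A(f)+f$ together with Muckenhoupt's theorem, exactly parallel to what you wrote for $M^+$.
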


Let $\el_c^\8(\ent)$ be the subspace of $\el^\8(\ent)$ consisting of finitely supported functions. Inequality \eqref{Inequality:sharp} shows that $\V_q\A^+$ is bounded from $\el_c^\8(\ent)$ to $BMO^+(\ent)$, where $BMO^+(\ent)$ is the space of all functions $f$ on $\ent$ such that $\|f\|_{BMO^+(\ent)}=\|f^{+,\sharp}\|_\8<\8$. In fact, that inequality and its proof yield a weighted version of this $L^\8$-$BMO$ boundedness. Following \cite{MR}, given a weight $w$ define $BMO^+(\ent, w)$ to be the space of all functions $f$ such that
 $$\|f\|_{BMO^+(\ent, w)}=\sup_{n\in\ent}\, \sup_{k\in \nat}\, \sup_{n-k\le \el\le n}\,
 \frac{w(\el)}{k+1}\sum_{i=n}^{n+k}\big(f(i)- \frac1{k+1} \sum_{j=n+k}^{n +2k}f(j) \big)^+<\8.$$
The proof of \eqref{Inequality:sharp} gives the following

\begin{cor}\label{BMO:variation-Ap}
 Let $w$ be a weight on $\ent$ such that $w^{-1}\in A_1^{-}$. Then
 $$\|\V_q\A^+(f)\|_{BMO^+(\ent, w)}\lesssim \|fw\|_{\8},\quad f\in \el_c^\8(\ent).$$
\end{cor}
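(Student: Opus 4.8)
The plan is to revisit the proof of inequality \eqref{Inequality:sharp} and extract from it the quantitative information needed for the weighted $BMO$ estimate. Recall that in the proof of Theorem~\ref{Thm:variation-Ap+}(i) we fixed $n_0\in\ent$ and $k\ge1$, split $f=f_1+f_2+f_3$ with $f_1=f\un_{[n_0,\,n_0+3k]}$, and obtained the pointwise bound
$$
\frac1{k+1}\sum_{i=n_0}^{n_0+k}\Big(\V_q\A^+(f)(i)-\frac1{k+1}\sum_{j=n_0+k}^{n_0+2k}\V_q\A^+(f)(j)\Big)^+
\lesssim E_1+E_2 + (\text{the analogous $2k$-terms}),
$$
where $E_1\lesssim M_r^+(f)(n_0)$ came from the $\el^r$-boundedness of $\V_q\A^+$ applied to $f_1$, and $E_2\lesssim M_r^+(f)(n_0)$ came from the pointwise estimate \eqref{II}. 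The key point is that the implicit constants there are \emph{absolute} (they depend only on $p,q,r$), so in fact what the proof really shows is that for every $n_0$, every $k\ge1$, and every $\el$ with $n_0-k\le\el\le n_0$,
$$
\frac1{k+1}\sum_{i=n_0}^{n_0+k}\Big(\V_q\A^+(f)(i)-\frac1{k+1}\sum_{j=n_0+k}^{n_0+2k}\V_q\A^+(f)(j)\Big)^+
\;\lesssim\; M_r^+(f)(n_0)
$$
for any $r>1$. (Here I abuse notation and note $\el$ does not even appear on the right; this is exactly the slack we will exploit.)

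Next I would bound $M_r^+(f)(n_0)$ by the weighted sup-norm of $f$. Fix $r>1$. For any $m\ge0$,
$$
\frac1{m+1}\sum_{i=n_0}^{n_0+m}|f(i)|^r
=\frac1{m+1}\sum_{i=n_0}^{n_0+m}|f(i)|^r w(i)^r\, w(i)^{-r}
\le \|fw\|_\8^r\;\frac1{m+1}\sum_{i=n_0}^{n_0+m}w(i)^{-r}.
$$
Now the hypothesis $w^{-1}\in A_1^-$ says precisely that $M^-(w^{-1})\lesssim w^{-1}$, and for $r>1$ sufficiently close to $1$ the reverse-H\"older property of $A_1^-$ weights gives $M^-\big((w^{-1})^r\big)^{1/r}\lesssim w^{-1}$ as well; unravelling the definition of $M^-$ over the interval $[n_0,\,n_0+m]$ (which lies to the right of each point $n_0+i$, matching the one-sided geometry of $A_1^-$), one gets
$$
\frac1{m+1}\sum_{i=n_0}^{n_0+m}w(i)^{-r}\;\lesssim\; w(\el)^{-r}
$$
for every $\el\le n_0$, in particular for $n_0-k\le\el\le n_0$. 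Taking the $r$-th root and the supremum over $m$ yields $M_r^+(f)(n_0)\lesssim \|fw\|_\8\, w(\el)^{-1}$, uniformly in $\el\in[n_0-k,\,n_0]$.

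Combining the two displays, multiplying through by $w(\el)$, and taking the supremum over $n_0$, over $k\in\nat$, and over $n_0-k\le\el\le n_0$ gives exactly
$$
\|\V_q\A^+(f)\|_{BMO^+(\ent,w)}\lesssim \|fw\|_\8,\qquad f\in\el_c^\8(\ent),
$$
which is the claim. The main obstacle is the middle step: one must check that the one-sided reverse-H\"older inequality for $A_1^-$ weights is available and is oriented correctly, i.e.\ that the averaging interval $[n_0,\,n_0+m]$ appearing in $M_r^+(f)(n_0)$ is controlled by $w(\el)^{-1}$ precisely for $\el$ to the \emph{left} of $n_0$ — this is where the choice of $A_1^-$ (rather than $A_1^+$ or $A_1$) in the hypothesis is forced, and matching it against the definition of $BMO^+(\ent,w)$ (whose extra weight $w(\el)$ sits at a point $\el\le n$) requires a little care but no new ideas; everything else is bookkeeping inside the already-completed proof of \eqref{Inequality:sharp}.
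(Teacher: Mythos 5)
Your overall route is the intended one (rerun the proof of \eqref{Inequality:sharp}, insert $|f|\le \|fw\|_\8\,w^{-1}$, and use the one-sided class of $w^{-1}$ together with a one-sided reverse H\"older inequality to handle the exponent $r>1$), but the pivotal intermediate claim is false as stated. You assert $\frac1{m+1}\sum_{i=n_0}^{n_0+m}w(i)^{-r}\lesssim w(\el)^{-r}$ for \emph{every} $\el\le n_0$ and every $m\ge0$, hence $M^+_r(f)(n_0)\lesssim \|fw\|_\8\,w(\el)^{-1}$ uniformly in $\el\in[n_0-k,n_0]$; this is exactly where the argument breaks. The condition $w^{-1}\in A_1^-$ (in the paper's convention: $\frac1{m+1}\sum_{i=n}^{n+m}w^{-1}(i)\le C\min\{w^{-1}(i): i\in[n-m,n]\}$, i.e.\ control of $M^+(w^{-1})$, not $M^-$ as you wrote) only bounds the average of $w^{-1}$ over $[n_0,n_0+m]$ by $w(\el)^{-1}$ when the distance $n_0-\el$ is at most comparable to $m$; for $n_0-\el\gg m$ one only gets the diluted bound with the unbounded factor $\frac{n_0+m-\el+1}{m+1}$. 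A concrete counterexample: $w(i)=(|i-n_0|+1)^{\theta}$ with $0<\theta<1$, so $w^{-1}\in A_1^-$ (even $A_1$), and $f=\un_{\{n_0\}}$; then $\|fw\|_\8=1$ and $M^+_r(f)(n_0)\ge1$, while $w(\el)^{-1}=(n_0-\el+1)^{-\theta}\to0$. Since in the definition of $BMO^+(\ent,w)$ the parameter $k$ runs over all of $\nat$ and $\el$ over $[n-k,n]$, the point $\el$ is allowed to be arbitrarily far to the left, so this is not a removable technicality: taken literally, your chain would yield $1\lesssim (k+1)^{-\theta}$ for all $k$. (The corollary itself survives in this example only because the oscillation average at scale $k$ decays like $1/k$, which beats $w(\el)\approx k^{\theta}$ --- information your two-step reduction throws away.)

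The fix is precisely the piece you dismiss as slack. An inspection of the proof of \eqref{Inequality:sharp} shows that, for fixed $n_0$ and $k$, the averaged truncated oscillation at scale $k$ is controlled not by the full $M^+_r(f)(n_0)$ but only by quantities of the form $\big(\frac1{m+1}\sum_{i=n_0}^{n_0+m}|f(i)|^r\big)^{1/r}$ with $m\gtrsim k$: the term $E_1$ uses $m=3k$, the estimates of $F_1$ and $F_2$ use $m=2^{s+2}k$, and the $\beta_j$ estimate uses $m=N_j+n-n_0$ with $N_j>2k$. For such $m$ and for $\el\in[n_0-k,n_0]$ one has $n_0-\el\le k\lesssim m$, so the $A_1^-$ condition for $(w^{-1})^r$ (valid for some $r>1$ by the one-sided reverse H\"older inequality, which is indeed the other ingredient needed, since the paper's proof genuinely requires $r>1$) applies with the correct scale matching and gives $\frac1{m+1}\sum_{i=n_0}^{n_0+m}w(i)^{-r}\lesssim w(\el)^{-r}$; multiplying by $w(\el)$ then yields the $BMO^+(\ent,w)$ bound. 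So the conclusion and the general strategy are right, but the restriction to scales $m\gtrsim k\ge n_0-\el$ must be carried through explicitly rather than replaced by a uniform-in-$\el$ bound for the full one-sided maximal function, which is false.
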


We have, of course, similar statements for $BMO^-$ and $BMO$. The interest of such weighted $L^\8$-$BMO$ boundedness results lies in the fact that they can be extrapolated to show the weighted type $(p, p)$ inequalities. See \cite{MR} for the ones-sided case and \cite{HMS} for the two-sided case.

\begin{rk}
 The proof of \eqref{Inequality:sharp} can be modified to show the following fact: For any $f\in \el^\8(\ent)$ either $\V_qA^+(f)\equiv\8$  or $\V_qA^+(f)<\8$ everywhere (this is easier);  see \cite{CMMTV} for the case of the Poisson semigroup on $\real^n$.  Below is an example for which the former alternative occurs:
  $$f= \sum_{k=0}^\infty \un_{(2^{2k}, 2^{2k+1}]}.$$
Indeed, consider only $\V_qA^+(f)(0)$. Then it is easy to check that there exists a constant $c>0$ such that
 $$\big|A_{2^{2k+2} } f(0) - A_{2^{2k+1} } f(0) \big|>c,\quad\forall\; k\ge0.$$
It then follows that  $\V_qA^+(f)(0)=\8$.
\end{rk}

We end this section with an application to convolution operators. For a function $\psi$ on an interval $I\subset\real$ define $\psi_t(x)=\frac1t\,\psi(\frac xt)$ and  the family of convolution operators:
 $$\psi_t*f(x)=\int_{\real}\psi_t(x-y)f(y)dy.$$
Here the integral is understood as the integral on the domain of the function $y\mapsto \psi_t(x-y)$. Let $\Psi(f)(x)=\{\psi_t*f(x)\}_{t>0}$. We will consider the $q$-variation $\V_q\Psi(f)(x)$ of $\Psi(f)(x)$. The following corollary is implicit in \cite{CJRW1} in the unweighted case.

\begin{cor}\label{Thm:variation-Ap conv}
 Let $0<a<b<\8$ and $q>2$.
 \begin{enumerate}[\rm(i)]
 \item Let $\psi$ be a differentiable function on $[a,\, b]$ such that $\int_a^bx|\psi'(x)|dx<\8$.  Then  $\V_q\Psi$  maps $L^p(\real, w)$ into $L^p(\real, w)$ for  $w \in A^-_p$ and $1<p<\8$, and  $L^1(\real, w)$ into $L^{1.\8}(\real, w)$ for $w \in A^-_1$.
  \item Let $\psi$ be a differentiable function on $[-b,\, -a]$ such that $\int_{-b}^{-a}|x|\,|\psi'(x)|dx<\8$.  Then  $\V_q\Psi$  maps $L^p(\real, w)$ into $L^p(\real, w)$ for  $w \in A^+_p$ and $1<p<\8$, and  $L^1(\real, w)$ into $L^{1.\8}(\real, w)$ for $w \in A^+_1$.
  \item Let $\psi$ be a differentiable function on $[-a,\, b]$ such that $\int_{-a}^b|x|\,|\psi'(x)|dx<\8$.  Then  $\V_q\Psi$  maps $L^p(\real, w)$ into $L^p(\real, w)$ for  $w \in A_p$ and $1<p<\8$, and  $L^1(\real, w)$ into $L^{1.\8}(\real, w)$ for $w \in A_1$.
  \end{enumerate}
  \end{cor}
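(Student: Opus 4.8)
The plan is to reduce each statement to Theorem~\ref{Thm:variation-Ap cont} by writing the convolution family $\{\psi_t*f\}_{t>0}$ as a superposition of one-sided (or symmetric) averages, exploiting the fundamental theorem of calculus. Consider case (i) first. For $\psi$ differentiable on $[a,b]$ we write, for $y$ in the relevant range, $\psi(y)=-\int_y^b\psi'(u)\,du$ (after normalizing so that $\psi(b)=0$, or more safely keeping the boundary term and treating it separately). Then a change of variables and Fubini give an identity of the schematic form
\begin{equation*}
 \psi_t*f(x)=\int_a^b u\,\psi'(u)\,\big(A^-_{tu}f\big)(x)\,\frac{du}{u}+\text{(boundary term)},
\end{equation*}
up to harmless constants and a rescaling $t\mapsto tu$ inside the average; the point is that $\frac1t\int_0^{tu}f(x-s)\,ds=u\,A^-_{tu}f(x)/u$-type manipulations turn $\psi_t*f$ into an average over dilation parameters of the operators $A^-_s$ weighted by the finite measure $d\mu(u)=|u\psi'(u)|\,du$ on $[a,b]$. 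Since the hypothesis is exactly $\int_a^b u|\psi'(u)|\,du<\infty$, this measure has finite total mass. The sign $A^-$ (rather than $A^+$) comes from the fact that $\psi$ is supported on the positive half-line $[a,b]$, so $\psi_t(x-y)$ sees values $f(y)$ with $y<x$; this matches the $A_p^-$ weight class in the statement. Cases (ii) and (iii) are identical after reflecting the interval: support in $[-b,-a]$ forces $A^+$ and $A_p^+$, while support straddling $0$ (interval $[-a,b]$) forces the symmetric $A$ and $A_p$.

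With the representation in hand, the second step is a Minkowski-type inequality for the $v_q$ (quasi-)norm: since $v_q$ is a Banach space modulo constants and the $q$-variation is subadditive, for a family obtained by integrating a parametrized family of families against a finite positive measure $\mu$ we have
\begin{equation*}
 \big\|\{\psi_t*f(x)\}_t\big\|_{v_q}\lesssim\int_a^b\big\|\{A^-_{tu}f(x)\}_t\big\|_{v_q}\,d\mu(u)+\text{(boundary term)}.
\end{equation*}
But $\{A^-_{tu}f(x)\}_{t>0}$ is just a reparametrization of $\{A^-_sf(x)\}_{s>0}$ by the increasing bijection $t\mapsto tu$, and the $q$-variation is invariant under monotone reparametrization of the index; hence $\|\{A^-_{tu}f(x)\}_t\|_{v_q}=\V_q\A^-(f)(x)$ for every $u$. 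Pulling the $x$-integral in $L^p(\real,w)$ through the $u$-integral by Minkowski's integral inequality, we get $\|\V_q\Psi(f)\|_{L^p(w)}\lesssim\mu([a,b])\,\|\V_q\A^-(f)\|_{L^p(w)}$, plus the boundary contribution, and Theorem~\ref{Thm:variation-Ap cont} finishes the $(p,p)$ bound for $w\in A_p^-$. The weak-type $(1,1)$ bound for $w\in A_1^-$ follows the same way, using that $L^{1,\infty}(w)$, while not normed, still admits a Minkowski-type inequality with a fixed loss (or, more cleanly, by the standard averaging trick: the level set of a superposition is controlled by a superposition of level sets at a slightly smaller threshold, which costs only the finite total mass $\mu([a,b])$).

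The main obstacle is the \emph{boundary term}: when one integrates $\psi'$ to recover $\psi$, one picks up contributions at the endpoints $a$ and $b$ (and, in case (iii), at the interior point where the behavior changes if $\psi$ is not assumed to vanish there), which correspond not to averages over a range of dilations but to a single rescaled indicator convolution $c\cdot\frac1t\un_{[a t, bt]}*f$. I would handle this by noting that $\frac1t\un_{[at,bt]}*f(x)=b\,A^-_{bt}f(x)-a\,A^-_{at}f(x)$ is itself a fixed linear combination of two reparametrized one-sided averages, so its $q$-variation is again dominated by a constant times $\V_q\A^-(f)(x)$ and is absorbed into the same estimate; the only care needed is that $a>0$ strictly, which is given, so no singularity at $t\to0$ arises and all the rescalings $t\mapsto ut$ with $u\in[a,b]$ are genuine order-isomorphisms of $(0,\infty)$. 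A secondary technical point is justifying the Fubini/change-of-variables manipulations for merely integrable $f$; this is routine once one checks everything first for $f$ bounded with compact support (dense in all the spaces involved) and then passes to the limit using the already-established a priori bounds.
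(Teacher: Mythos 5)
Your proposal follows essentially the same route as the paper: write $\psi$ via the fundamental theorem of calculus as a superposition of indicators, so that $\psi_t*f$ becomes an integral against the finite measure $|\psi'(z)|\,dz$ on $[a,b]$ (plus the boundary term $\psi(b)\,\tfrac1t\un_{[a,\,b]}(\tfrac\cdot t)*f$) of rescaled one-sided averages, and then use the triangle inequality in $v_q$ together with the invariance of the $q$-variation under the reparametrizations $t\mapsto zt$ to reduce to $\V_q\A^-$ and Theorem~\ref{Thm:variation-Ap cont}. One correction of emphasis, though: the detour through Minkowski's integral inequality in $L^p(\real,w)$, and especially the claimed ``Minkowski-type inequality with a fixed loss'' in $L^{1,\8}(w)$ (not a valid general principle for integral superpositions, nor is the level-set trick as you state it), is unnecessary and should be dropped. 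Since, as you yourself observe, $\|\{A^-_{zt}f(x)\}_{t>0}\|_{v_q}=\V_q\A^-(f)(x)$ for every $z\in[a,b]$, your superposition estimate is already a \emph{pointwise} bound $\V_q\Psi(f)(x)\le C_\psi\,\V_q\A^-(f)(x)$ with $C_\psi\lesssim |\psi(b)|+\int_a^b(a+z)|\psi'(z)|\,dz$ --- exactly the inequality the paper derives --- and then both the strong $(p,p)$ bound for $A^-_p$ and the weak $(1,1)$ bound for $A^-_1$ follow at once from Theorem~\ref{Thm:variation-Ap cont}, with no interchange of norms at all. Finally, your schematic identity has a harmless bookkeeping slip: writing $\un_{[a,\,z]}=\un_{(0,\,z]}-\un_{(0,\,a]}$ gives $\tfrac1t\un_{[a,\,z]}(\tfrac\cdot t)*f(x)=zA^-_{zt}f(x)-aA^-_{at}f(x)$, so the correct density is $z\psi'(z)\,dz$ together with the $-aA^-_{at}f$ correction (your ``$u\psi'(u)\,du/u$'' loses the factor $u$); since $0<a<b<\8$ this does not affect finiteness or the conclusion, but it would matter in the endpoint cases $a=0$ or $b=\8$ mentioned in the remark following the corollary.
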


 \begin{proof}
  We prove only part (i). The two others can be handled in a similar way. For $y\in [a,\, b]$ we have
 $$ \psi(y)=\psi(b)-\int_y^b\psi'(z)dz
  =\psi(b)\un_{ [a,\, b]}(y)-\int_a^b\un_{ [a,\, z]}(y)\psi'(z)dz.$$
Thus
 $$\psi_t(y)=\psi(b)\frac1t\,\un_{ [a,\, b]}(\frac yt)-\int_a^b\frac1t\,\un_{ [a,\, z]}(\frac yt)\psi'(z)dz.$$
Consequently,
 $$\psi_t*f(x)=\psi(b)\frac1t\,\un_{ [a,\, b]}(\frac \cdot t)*f(x)-\int_a^b\frac1t\,\un_{ [a,\, z]}(\frac \cdot t)*f(x)\psi'(z)dz.$$
Writing
 $$\frac1t\,\un_{ [a,\, z]}(\frac yt)=\frac1t\,\un_{ (0,\, z]}(\frac yt)-\frac1t\,\un_{ (0,\, a]}(\frac yt),$$
we get
 $$\big\|\big\{\frac1t\,\un_{ [a,\, z]}(\frac \cdot t)*f(x)\big\}_{t>0}\big\|_{v_q}
 \le (a+z)\big\|\A^-(f)(x)\big\|_{v_q}.$$
Therefore,
 $$\V_q\Psi(f)(x)
 \le \big[(a+b)|\psi(b)|+a \int_a^b|\psi'(z)|dz+ \int_a^bz|\psi'(z)|dz\big]\, \V_q\A^-(f)(x).$$
Then by Theorem~\ref{Thm:variation-Ap cont} we get the announced assertion.
 \end{proof}

 \begin{rk}
 In the preceding corollary we can take $a=0$ or $b=\8$. In the case of $b=\8$ we need impose the additional assumption that $\lim_{y\to\8}\psi(y)=0$. For instance, if $\psi$ is as in part (i) with $a=0$ and $b=\8$, then the identity
 $$ \psi(y)=-\int_y^\8\un_{ (0,\, z]}(y)\psi'(z)dz$$
allows us to get the assertion as before.
 \end{rk}


\section{Singular integrals}\label{Singular integrals}


Let $K$ be a kernel on $\real\times\real\setminus\{(x, x):x\in\real\}$. We will suppose that $K$ satisfies the following regularity conditions. There exist two constants $\d>0$ and $C>0$ such that
 \begin{itemize}
 \item[$({\rm K}_0$):] $\displaystyle |K(x, y)|\le \frac{C}{|x-y|}$ for $x\neq y$;
 \item[$({\rm K}_1$):] $\displaystyle  |K(x, y)-K(z,y)|\le \frac{C|x-z|^\d}{|x-y|^{1+\d}}$ for $|x-y|>2|x-z|$;
 \item[$({\rm K}_2$):] $\displaystyle |K(y, x)-K(y,z)|\le \frac{C|x-z|^\d}{|x-y|^{1+\d}}$ for $|x-y|>2|x-z|$.
 \end{itemize}
 By a slight abuse of notation, we will also use $K$ to denote the associated singular integral operator:
 $$K(f)(x)=\int_{\real}K(x, y)f(y)dy.$$
For any $t>0$ let $K_t$ be  the truncated operator:
 $$K_t(f)(x)=\int_{|x-y|>t}K(x, y)f(y)dy.$$
Let $\K(f)(x)=\{K_t(f)(x)\}_{t>0}$. We will consider the $q$-variation of $\K f$:
 $$\V_q\K (f)(x)=\|\K (f)(x)\|_{v_q}=\sup_{\{t_j\}}\big(\sum_{j=0}^\8\big|K_{t_j}(f)(x)-K_{t_{j+1}}(f)(x)\big|^q\big)^{1/q}.$$
For any interval $I\subset (0,\,\8)$ let
 $$R_I=\{x: x\in\real, |x|\in I\}\quad\textrm{and}\quad K_I(f)(x)=\int_{\real}K(x, y)\un_{R_I}(x-y)f(y)dy.$$
Then
  $$\V_q\K (f)(x)=\sup\big(\sum_{j=0}^\8\big|K_{I_j}(f)(x)\big|^q\big)^{1/q},$$
where the supremum runs all sequences $\{I_j\}$ of disjoint intervals of $(0,\,\8)$.

\medskip

The following is the main theorem of this section. We would emphasize that it is new even in the unweighted case. In this regard, compare it with \cite[Theorem~B]{CJRW2}.  On the other hand, the proof of part (i)  in the unweighted case provides a new proof of the weak type $(1, 1)$ variation  inequality  for the Hilbert transform of \cite{CJRW1}. It is simpler than that of \cite{CJRW1} since we do not pass through short and long variations.  Note that a similar result was proved in \cite{HLP} but only for smooth truncations.

\begin{thm}\label{vq CZ}
 Let $K$ be a  kernel satisfying $({\rm K}_0)$-$({\rm K}_2$), and let $2<q<\8$. Assume that the operator $\V_q\K$ is  of type $(p_0,p_0)$ for some $1<p_0<\8$:
  $$\int_{\real}\big(\V_q\K (f)(x)\big)^{p_0}dx\lesssim \int_{\real}|f(x)|^{p_0}dx,\quad\forall\; f\in L^{p_0}(\real).$$
Then
 \begin{enumerate}[\rm(i)]
 \item for $w\in A_1$
  $$w\big(\big\{x:\V_q\K (f)(x)>\l\big\}\big)\lesssim\frac1\l \int_{\real}|f(x)|w(x)dx,\quad\forall\; f\in L^1(\real, w),\;\forall\; \l>0;$$
 \item for $1< p<\8$ and $w\in A_p$
  $$\int_{\real}\big(\V_q\K (f)(x)\big)^pw(x)dx\lesssim \int_{\real}|f(x)|^pw(x)dx,\quad\forall\; f\in L^p(\real, w).$$
   \end{enumerate}
\end{thm}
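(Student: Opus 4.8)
\textbf{Proof plan for Theorem~\ref{vq CZ}.}
The plan is to mimic the structure of the proof of Theorem~\ref{Thm:variation-Ap+}, replacing the differential averages by the truncated singular integrals and exploiting the kernel regularity $({\rm K}_0)$--$({\rm K}_2)$ in place of the crude support/cancellation estimates used there. For part (ii), the starting point is a Fefferman--Stein type sharp function inequality: for a compactly supported, say bounded, $f$ and for $r>1$ close to $1$, I would prove the pointwise bound
\beq\label{sharp-SI}
\big(\V_q\K (f)\big)^{\sharp}(x)\lesssim M_r(f)(x):=\big(M(|f|^r)(x)\big)^{1/r},
\eeq
where $\sharp$ denotes the usual (two-sided) Fefferman--Stein sharp maximal function. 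Granting \eqref{sharp-SI}, the conclusion follows by the standard weighted machinery: given $w\in A_p$ one chooses $r>1$ with $w\in A_{p/r}$, uses the weighted $L^p$ boundedness of $M_r$, and the fact that $\|M f\|_{L^p(w)}\lesssim\|f^\sharp\|_{L^p(w)}$ for $w\in A_\infty$ (for which one needs to know a priori that $\V_q\K f\in L^{p}$ for truncated/bounded data, coming from the unweighted $(p_0,p_0)$ hypothesis and interpolation/extrapolation); then combine with the $(p_0,p_0)$ hypothesis. To establish \eqref{sharp-SI} one fixes an interval $I=[x_0-\ell,x_0+\ell]$ containing $x$, splits $f=f_1+f_2$ with $f_1=f\un_{3I}$, and estimates the oscillation of $\V_q\K f$ on $I$ around the constant $\V_q\K (f_2)(x_0)$. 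Using $|\,\|\K f(y)\|_{v_q}-\|\K f_2(x_0)\|_{v_q}\,|\le \|\K f(y)-\K f_2(x_0)\|_{v_q}$ and $\K f_1$'s contribution controlled by the $(p_0,p_0)$ bound together with H\"older (exactly as for $E_1$ above, using $v_{p_0}\subset v_q$ if $p_0<q$, or by first raising the exponent), the whole matter reduces to the pointwise estimate
\beq\label{key-SI}
\big\|\K (f_2)(y)-\K (f_2)(x_0)\big\|_{v_q}\lesssim M_r(f)(x_0),\qquad y\in I.
\eeq

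To prove \eqref{key-SI} I would expand, for an increasing sequence $\{t_j\}$ (equivalently, disjoint intervals $\{I_j\}\subset(0,\infty)$),
\be
K_{I_j}(f_2)(y)-K_{I_j}(f_2)(x_0)
=\int_{\real}\big[K(y,z)\un_{R_{I_j}}(y-z)-K(x_0,z)\un_{R_{I_j}}(x_0-z)\big]f_2(z)\,dz,
\ee
and split the bracket, just as $\alpha_j-\beta_j$ above, into (a) a ``smoothness'' term where the annuli $R_{I_j}(y)$ and $R_{I_j}(x_0)$ overlap and one uses $({\rm K}_1)$ to get a factor $|y-x_0|^\delta/|y-z|^{1+\delta}\lesssim \ell^\delta/|x_0-z|^{1+\delta}$, and (b) ``boundary'' terms supported on the symmetric difference $R_{I_j}(y)\triangle R_{I_j}(x_0)$, which is contained in two thin shells of width $\lesssim |y-x_0|\le\ell$ at radii $\approx$ the endpoints of $I_j$, on which one uses only $({\rm K}_0)$, i.e.\ $|K|\lesssim 1/|x_0-z|$. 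For the smoothness term, summing over $j$ and using that the $I_j$ are disjoint gives at worst one full integral $\int_{|z-x_0|>\ell}\ell^\delta|f_2(z)|\,|x_0-z|^{-1-\delta}dz\lesssim M(f)(x_0)\le M_r(f)(x_0)$ after the usual dyadic annular decomposition; no $v_q$ gain is needed here beyond $q\ge1$. For the boundary terms, the argument is the same telescoping/Hölder computation as for $\alpha_j$ (case $J_1$) and $\beta_j$ above: on each shell of inner radius $\rho_j\gtrsim\ell$ and width $\lesssim\ell$ one has $|K_{I_j}(f_2)(x_0)|_{\text{shell}}\lesssim \rho_j^{-1}\int_{|z-x_0|\sim\rho_j}|f|$, and since distinct $I_j$ produce shells at comparable-or-separated radii, raising to the $q$th power (or to the $r$th with $r<q$, then using $v_r\subset v_q$), summing, and invoking Lemma~\ref{basic ineq} (with $t_j$ the radii) bounds the total by $M_r(f)(x_0)^r/\ell^{r-1}$ times $\ell^{r-1}$, i.e.\ $M_r(f)(x_0)^r$; here the condition $q>2$, equivalently $r>1$, is exactly what makes the geometric series in Lemma~\ref{basic ineq} converge. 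This yields \eqref{key-SI} and hence \eqref{sharp-SI} and part (ii).

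For part (i), the weak type $(1,1)$ with $w\in A_1$, I would run the Calder\'on--Zygmund argument as in the proof of Theorem~\ref{Thm:variation-Ap+}(ii). Fix $\lambda>0$, apply the \emph{classical} (two-sided) weighted CZ decomposition at height $\lambda$ to get $f=g+b$, $b=\sum_i b_i$ with $b_i$ supported on a cube $I_i$, mean zero, $\|b_i\|_1\lesssim\lambda|I_i|$, $\|g\|_\infty\lesssim\lambda$, $\|g\|_1\le\|f\|_1$, and $w(\O)\lesssim\lambda^{-1}\|f\|_{L^1(w)}$ by the weak $(1,1)$ of $M$ for $A_1$ weights. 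The good part is handled by part (ii) with $p=2$ exactly as above. For the bad part, set $\wt\O=\bigcup_i 2I_i$; then $w(\wt\O)\lesssim w(\O)$ by the doubling property of $w$, and it remains to bound $w(\{x\notin\wt\O:\V_q\K(b)(x)>\lambda/2\})$ by $\lambda^{-1}\sum_i\|b_i\|_{L^1(w)}$, hence by $\lambda^{-1}\|f\|_{L^1(w)}$. The new feature compared with the differential case is that the kernels are \emph{not} compactly supported, so the slick observation that for each $x$ at most one $b_i$ contributes fails; instead, for $x\notin\wt\O$ one estimates $\|\K(b_i)(x)\|_{v_q}$ directly. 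This is the step I expect to be the main obstacle, and it is where the regularity of $K$ is essential: I would show the pointwise bound
\beq\label{v1-SI}
\big\|\K(b_i)(x)\big\|_{v_q}\;\lesssim\;\frac{|I_i|^{\delta}}{\mathrm{dist}(x,I_i)^{1+\delta}}\,\|b_i\|_1,\qquad x\notin 2I_i,
\eeq
by writing each $K_{I_j}(b_i)(x)=\int (K(x,z)\un_{R_{I_j}}(x-z)-K(x,c_i)\un_{R_{I_j}}(x-c_i))b_i(z)\,dz$ (using $\int b_i=0$, $c_i$ the center of $I_i$), splitting into a smoothness part estimated by $({\rm K}_2)$ and boundary parts supported on shells of width $\lesssim|I_i|$, then proceeding exactly as in the proof of \eqref{key-SI}/\eqref{v1} above --- the disjointness of the $I_j$ plus Lemma~\ref{basic ineq} convert the sum over $j$ into the single factor $|I_i|^\delta\mathrm{dist}(x,I_i)^{-1-\delta}$, using $q>2$ once more. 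Granting \eqref{v1-SI}, one sums over $i$, uses that $w\in A_1$ to pass the average of $w$ over the dyadic dilates of $I_i$ down to $w(z)$ for $z\in I_i$ (the same weighted geometric-series estimate as in the final lines of the proof of Theorem~\ref{Thm:variation-Ap+}(ii), now with $\mathrm{dist}(x,I_i)^{1+\delta}$ in the denominator giving a convergent $\sum_s 2^{-s\delta}$), and obtains the required bound. Together with the good part this gives (i), and by Marcinkiewicz/extrapolation (i) strengthens (ii), though as noted the self-contained route is to prove (ii) directly via \eqref{sharp-SI} using only the $(p_0,p_0)$ hypothesis.
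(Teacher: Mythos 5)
Your outline of part (ii) (the sharp-function estimate $(\V_q\K f)^{\sharp}\lesssim M_r(f)$ via the split into a smoothness term controlled by $({\rm K}_1)$ and boundary shells controlled by $({\rm K}_0)$ with the $J_1/J_2$ dichotomy) is essentially the paper's Step~2. But the proof of part (i) has a genuine gap: the claimed pointwise bound \eqref{v1-SI},
$\|\K(b_i)(x)\|_{v_q}\lesssim |I_i|^{\delta}\,\mathrm{dist}(x,I_i)^{-1-\delta}\|b_i\|_1$ for $x\notin 2I_i$, is false. The cancellation of $b_i$ only helps for those truncations whose boundary does not cut through $I_i$; when it does, there is no $\delta$-gain. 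Concretely, take $b_i=\un_{[c_i,c_i+h]}-\un_{[c_i-h,c_i]}$ and $K(x,y)=1/(x-y)$ (the Hilbert kernel, which satisfies $({\rm K}_0)$--$({\rm K}_2)$), and let $x$ lie at distance $d\gg h$ to the left of $c_i$. Then $K_d(b_i)(x)-K_{d+2h}(b_i)(x)=\int_{c_i<z\le c_i+h}\frac{b_i(z)}{x-z}\,dz\approx \frac{h}{d}=\frac{\|b_i\|_1}{2d}$, so $\|\K(b_i)(x)\|_{v_q}\gtrsim \|b_i\|_1/d$, which exceeds your claimed bound by the unbounded factor $(d/h)^{\delta}$. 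Moreover the true bound $\|\K(b_i)(x)\|_{v_q}\lesssim \|b_i\|_1/\mathrm{dist}(x,I_i)$ is useless on its own: summed over $i$ and integrated over $\wt\O^c$ it diverges logarithmically, so the weak $(1,1)$ estimate cannot be obtained by a triangle inequality over $i$ at the level of $v_q$ norms. This is precisely the obstacle you flagged, and your proposed resolution does not survive it.

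The paper's mechanism is different and is worth recording. For $x\notin\wt\O$ and an (almost) extremizing sequence of annuli $R_j$, one splits, for each $j$, the intervals $I_i$ meeting $x+R_j$ into those \emph{entirely contained} in $x+R_j$ (index set $\I^1_j$) and those \emph{straddling} its boundary ($\I^2_j$). On $\I^1_j$ the mean-zero property of $b_i$ and $({\rm K}_2)$ give the $\delta$-gain $|I_i|^{\delta}|x-c_i|^{-1-\delta}\int|b_i|$, and one simply sums in $\ell^1$ over all $j$ and $i$. On $\I^2_j$ there are at most four straddling intervals per $j$; only the size bound $({\rm K}_0)$ is used, yielding for each fixed $i$ the non-integrable quantity $\sum_j|K_{R_j}(b_i)(x)|\lesssim |x-c_i|^{-1}\int_{I_i}|f|$, and the divergence is defeated not by any extra decay but by a Chebyshev argument at level $1/8^q$: one integrates the $q$-th power against $w$, writes $|x-c_i|^{-q}\big(\int_{I_i}|f|\big)^q=|I_i|^{q-1}|x-c_i|^{-q}\big(\tfrac1{|I_i|}\int_{I_i}|f|\big)^{q-1}\int_{I_i}|f|\lesssim \l^{q-1}|I_i|^{q-1}|x-c_i|^{-q}\int_{I_i}|f|$, and uses the $A_1$ tail estimate $\int_{|x-c_i|>|I_i|}|x-c_i|^{-q}w(x)\,dx\lesssim |I_i|^{1-q}w(y)$ for a.e.\ $y\in I_i$. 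Any exponent $>1$ would do here; no smoothness is needed on the straddling part. A second, lesser, issue in your plan is logical order: your ``self-contained'' proof of (ii) needs the unweighted $L^r$-boundedness of $\V_q\K$ for $r$ close to $1$ to handle the local term $D_1$ (the $(p_0,p_0)$ hypothesis alone only gives $D_1\lesssim M_{p_0}(f)$, which does not cover all $p\in(1,\8)$ and $w\in A_p$), while your proof of (i) invokes (ii) for the good part. The paper avoids this circularity by first proving the \emph{unweighted} weak $(1,1)$ (good part via the $(p_0,p_0)$ hypothesis, bad part as above with $w$ arbitrary in $A_1$), interpolating to get $L^r$ for $1<r<p_0$, then proving (ii), and only then upgrading the good-part estimate to general $w\in A_1$ to finish (i).
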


\begin{proof}

This proof is similar to that of Theorem~\ref{Thm:variation-Ap+}. Technically,  it is slightly simpler for the kernel $K$ satisfies the regularity conditions $({\rm K}_0)$-$({\rm K}_2$) while the kernel of the differential operators do not. For clarity, we divide the proof into three steps.

\medskip\n{\bf Step~1.} The main objective of this step is to show that  $\V_q\K$ is  of type $(p,p)$ for every $1<p<p_0$. To this end we will prove the weak type $(1, 1)$ of  $\V_q\K$, that is the unweighted version of part (i). The full generality of (i) will be treated in step~3. However, to avoid too many repetitions, we will present most of our arguments for a general weight $w\in A_1$ in this first step. Only in one place we  need to assume that $w\equiv1$.

The main tool is again the  classical Calder\'on-Zygmund decomposition.
Let  $f$ be a compactly supported integrable function on $\real$ and  $\lambda >0$.  Then  $\O = \{x\in\real: M(f)(x) > \lambda\}$ can be decomposed into (finitely many) disjoint intervals: $\O = \bigcup_iI_i$ with the following properties
 \begin{enumerate}[$\bullet$]
 \item $|f| \le \lambda$ on $\O^c$;
 \item $\displaystyle |\O| \le \frac1{\lambda}\,\|f\|_1$;
 \item $\displaystyle \lambda < \frac1{|I_i|} \int_{I_i} |f| \le 2 \lambda$.
 \end{enumerate}
Accordingly,   $f=g+b$ with
 \begin{align*}
 &g=f \textrm{ on } \O^c\quad\textrm{and}\quad  g= \frac1{|I_i|} \int_{I_i} f \, \textrm{ on } I_i  \textrm{ for each }i,\\
 &b= \sum_i b_i, \textrm{ where } b_i = \big(f -\frac1{|I_i|} \int_{I_i} f  \big) \un_{I_i} .
  \end{align*}
It is clear that
 \begin{enumerate}[$\bullet$]
 \item $\|g\|_\8 \le 2 \lambda$;
\item for each $i$, $\displaystyle  \int_{\real} b_i= 0 $ and $\displaystyle \frac1{|I_i|}\int_{\real}|b_i|\leq 4\lambda$.
 \end{enumerate}

To estimate $w(\{x: \V_{q}\K(f) (x) > \lambda \})$, by rescaling, we can assume that $\l=1$. With the above notation (with $\l=1$),  we have
 $$
 w(\{x: \V_{q}\K(f) (x) > 1 \})\le w(\{x :\V_{q}\K(g)(x) >\frac12 \})
 + w(\{x: \V_{q}\K(b) (x)> \frac12 \}).
 $$
We have to estimate the two terms on the right. It is only here for the good part that we require that $w\equiv1$. Thus if $w\equiv1$, then by the $L^{p_0}$-boundedness of $\V_q\K$,  we have
 \begin{align*}
 w(\{x :\V_{q}\K(g)(x) >\frac12 \})
 &=|\{x :\V_{q}\K(g)(x) >\frac12 \}|
 \le 2^{p_0}\int_{\real}(\V_{q}\K(g)(x) )^{p_0}dx\\
 &\lesssim \int_{\real} |g(x)|^{p_0}dx\lesssim \int_{\real} |f(x)|dx.
 \end{align*}

In the rest of this step, we assume that $w$ is a general weight in $A_1$. To treat the bad part $b$, let $\wt\O= \bigcup_i \wt I_i$, where  $\wt I$  denotes the interval with the same center as $I$ but three times the length. Then
 \begin{align*}
 w(\{x: \V_{q}\K(b) (x)> \frac12 \})
 &\le w(\wt\O) + w( \{x \in \wt\O^c:\V_{q}\K(b) (x) > \frac12 \})\\
 &\lesssim \int_{\real} |f(x)|w(x)dx + w( \{x \in \wt\O^c: \V_{q}\K(b) (x) > \frac12 \}).
 \end{align*}
Here we have used the doubling property of $w$ and the weak type $(1,1)$ of $M$ relative to $A_1$ weights for $w(\wt\O)$:
 $$w(\wt\O)\lesssim w(\O)\lesssim \int_{\real} |f(x)|w(x)dx.$$
 It thus remains to estimate the last term. This is  our main task in this part of the proof.

For every $x\notin \wt\O$ choose an increasing sequence $\{t_j\}$ (that depends  on $x$) such that
 $$\V_{q}\K(b) (x)\le 2\big(\sum_j\big|K_{(t_j,\,t_{j+1}]}(b)(x)\big|^q\big)^{1/q}.$$
Let $R_j=R_{(t_j,\,t_{j+1}]}$ and $K_{R_j}=K_{(t_j,\,t_{j+1}]}$. Note that $K_{R_j}(b)(x)\neq0$ only if $x+R_j$ meets some $I_i$. We consider two cases:
 $$\I^1_j=\{i: I_i\subset  x+R_j\}\quad\textrm{and}\quad
 \quad \I^2_j=\{i:  I_i\not\subset x+R_j, I_i\cap (x+R_j)\neq\emptyset\}.$$
Then
 $$\V_{q}\K(b) (x)\le
  2\big(\sum_{j}\big|\sum_{i\in\I^1_j}K_{R_j}(b_i)(x)\big|^q\big)^{1/q}
   + 2\big(\sum_{j}\big|\sum_{i\in\I^2_j}K_{R_j}(b_i)(x)\big|^q\big)^{1/q}.$$
It follows that
 \begin{eqnarray}\label{b1b sym}
 \begin{array}{ccl}
  \begin{split} \begin{displaystyle}
  w(\{x \in \wt\O^c: \V_{q}\K(b) (x) > \frac12 \})
  \end{displaystyle}
  &\le\begin{displaystyle}
  w(\{x \in \wt\O^c: \sum_{j}\big|\sum_{i\in\I^1_j}K_{R_j}(b_i)(x)\big|^q> \frac1{8^q} \})\end{displaystyle}\\
   &\begin{displaystyle}+w(\{x \in \wt\O^c: \sum_{j}\big|\sum_{i\in\I^2_j}K_{R_j}(b_i)(x)\big|^q> \frac1{8^q} \})
   \end{displaystyle}.
 \end{split} \end{array}
 \end{eqnarray}
  Let  $i\in\I^1_j$, that is, $I_i\subset  x+R_j$. Since $b_i$  is of vanishing mean, we have
 $$K_{R_j}(b_i)(x)=\int_{\real}K(x, y)b_i(y)dy=\int_{\real}\big(K(x, y)-K(x, c_i)\big)b_i(y)dy,$$
where $c_i$ is the center of $I_i$. Therefore, by $({\rm K}_2$)
 \begin{align*}
 \big(\sum_{j}\big|\sum_{i\in\I^1_j}K_{R_j}(b_i)(x)\big|^q\big)^{1/q}
 &\le\sum_{j}\big|\sum_{i\in\I^1_j}K_{R_j}(b_i)(x)\big|\\
 &\le\sum_i\int_{\real}\big|K(x, y)-K(x, c_i)\big|\,|b_i(y)|dy\\
 &\lesssim \sum_i\frac{|I_i|^\d}{|x-c_i|^{1+\d}}\int_{\real}|b_i(y)|dy\\
 &\lesssim \sum_i\frac{|I_i|^\d}{|x-c_i|^{1+\d}}\int_{I_i}|f(y)|dy.
 \end{align*}
Thus we deduce
 \begin{align*}
  w(\{x \in \wt\O^c: \sum_{j}\big|\sum_{i\in\I^1_j}K_{R_j}(b_i)(x)\big|^q > \frac1{8^q} \})
  &\lesssim\int_{\wt\O^c}\sum_{j\in\I^1_j}\big|K_{R_j}(b_i)(x)\big|w(x)dx\\
 &\lesssim \int_{\wt\O^c}\sum_i\frac{|I_i|^\d}{|x-c_i|^{1+\d}}\int_{I_i}|f(y)|dy\,w(x)dx\\
 &\lesssim\sum_i|I_i|^d\int_{I_i}|f(y)|\big[\int_{|x-c_i|>|I_i|}\frac{w(x)dx}{|x-c_i|^{1+\d}}\big]dy.
  \end{align*}
 However, by the definition of $A_1$ weights, we have
  \begin{align*}
  \int_{|x-c_i|>|I_i|}\frac{w(x)dx}{|x-c_i|^{1+\d}}
  &\lesssim |I_i|^{-\d}\sum_{s=0}^\8 2^{-\d s}\big[\frac1{2^{s}|I_i|}\int_{2^s|I_i|<|x-c_i|\le 2^{s+1}|I_i|}|w(x)dx\big]\\
  &\lesssim |I_i|^{-\d}\sum_{s=0}^\82^{-\d s}\big[\frac1{2^{s+2}|I_i|}\int_{|x-c_i|\le 2^{s+1}|I_i|}w(x)dx\big]\\
  & \lesssim |I_i|^{-\d} w(y)\;\textrm{ for }\; a.e. \; y\in I_i.
   \end{align*}
 Thus
  \beq\label{b1bb sym}
  w(\{x \in \wt\O^c: \sum_{j}\big|\sum_{i\in\I^1_j}K_{R_j}(b_i)(x)\big|^q > \frac1{8^q} \})\
   \lesssim \int_{\real}|f(y)|w(y)dy.
   \eeq

For the part on $\I^2_j$ is more delicate. A simple geometrical inspection shows that $\I^2_j$ contains at most four points for any $j$.  It then follows that
  $$
 \sum_{j}\big|\sum_{i\in\I^2_j}K_{R_j}(b_i)(x)\big|^q
 \le 4^{q-1} \sum_{j}\sum_{i\in\I^2_j}|K_{R_j}(b_i)(x)|^q
 \le 4^{q-1}\sum_i \big(\sum_{j}|K_{R_j}(b_i)(x)|\big)^q.
$$
 Thus
 $$w(\{x \in \wt\O^c: \sum_{j}\big|\sum_{i\in\I^2_j}K_{R_j}(b_i)(x)\big|^q> \frac1{8^q} \})
 \lesssim \sum_i\int_{\wt\O^c}\big(\sum_{j}|K_{R_j}(b_i)(x)|\big)^qw(x)dx.$$
Let $x\notin \wt\O$.  Then by $({\rm K}_0)$
 \begin{align*}
 \sum_{j}|K_{R_j}(b_i)(x)|
 &\le \sum_{j}\int_{R_j}\big|K(x, y)\big|\,|b_i(y)|dy\\
 &\lesssim \frac1{|x-c_i|}\sum_{j}\int_{\real}\un_{x+R_j}(y)|b_i(y)|dy\\
 &\lesssim \frac1{|x-c_i|}\int_{I_i}|b_i(y)|dy\\
 &\lesssim \frac1{|x-c_i|}\int_{I_i}|f(y)|dy.
 \end{align*}
Therefore, as in the display of inequalities following \eqref{v1}, we have
  \begin{align*}
 \int_{\wt\O^c}\big(\sum_{j}|K_{R_j}(b_i)(x)|\big)^qw(x)dx
 &\lesssim  \int_{\wt\O^c}\frac{|I_i|^{q-1}}{|x-c_i|^q}\int_{I_i}|f(y)|dyw(x)dx\\
 &\lesssim |I_i|^{q-1}\int_{I_i}|f(y)|\big[ \int_{|x-c_i|>|I_i|}\frac{1}{|x-c_i|^q}w(x)dx\big]dy\\
 &\lesssim \int_{I_i}|f(y)|w(y)dy.
 \end{align*}
Hence
 $$w(\{x \in \wt\O^c: \sum_{j}\big|\sum_{i\in\I^2_j}K_{R_j}(b_i)(x)\big|^q > \frac1{8^q} \})
 \lesssim \int_{\real}|f(y)|w(y)dy.$$
Combining this with \eqref{b1b sym} and \eqref{b1bb sym}, we get
 $$ w(\{x \in \wt\O^c: \V_{q}\K(b) (x) > \frac12 \}) \lesssim \int_{\real}|f(y)|w(y)dy.$$
This implies the desired estimate on the bad part $b$:
 $$w(\{x: \V_{q}\K(b) (x)> \frac12 \})\lesssim \int_{\real}|f(y)|w(y)dy.$$
Combining this with the unweighted estimate for the good part in the beginning of this step, we get the unweighted weak type $(1, 1)$ of $\V_q\K$.

Therefore, by the Marcinkiewicz interpolation theorem, we deduce that $\V_q\K$ is of type $(p, p)$   for any $1<p<p_0$.

\medskip\n{\bf Step~2.} This step is devoted to the proof of part (ii). As in the proof of Theorem~\ref{Thm:variation-Ap+}, it suffices to show the following inequality for any compactly supported function $f$
 \begin{equation}\label{Inequality:sharpSI}
  (\V_q\K(f))^{\sharp}\lesssim M_r(f)
 \end{equation}
for $r>1$ close to $1$ ($r<\min(p_0,\, q)$). Note that the symmetric analogue of Lemma~\ref{Le:malaga} can be found in many  books on real variable harmonic analysis, for instance, \cite[Theorem~IV.2.20]{gar-rubio}. Fix a function $f$ on $\real$ and a point $x_0\in\real$. Recall that
 $$( \V_q\K(f))^{\sharp}(x_0)=
 \sup_{I} \frac{1}{|I|} \int_I\Big| \V_q\K(f)(x)-\frac{1}{|I|}\int_I  \V_q\K(f)(y)dy\Big|dx,$$
where the supremum runs over all intervals $I$ containing $x_0$. Fix such an interval $I$ and write $f=f_1+f_2$ with $f_1=f\un_{\wt I}$ and $f_2=f\un_{\wt I^c}$. Then
 \begin{align*}
  \frac{1}{|I|}& \int_I\Big| \V_q\K(f)(x)-\frac{1}{|I|}\int_I  \V_q\K(f)(y)dy\Big|dx\\
 &\le  \frac{2}{|I|} \int_I\Big|\ \V_q\K(f)(x)- \V_q\K(f_2)(c)\Big|dx\\
 &\le \frac{2}{|I|} \int_I \V_q\K(f_1)(x)dx
 + \frac{2}{|I|} \int_I\|\K(f_2)(x)-\K(f_2)(c)\|_{v_q}dx\\
 &\;{\mathop=^{\rm def}}\;  D_1+D_2 ,
 \end{align*}
where $c$ is the center of $I$. By the H\"older inequality and the $L^r$-boundedness of $\mathcal V_q$ already already proved in step~1, we get
  $$D_1\lesssim  \Big(\frac{1}{|I|}\int_I\big( \V_q\K(f_1)(x)\big)^rdx\Big)^{1/r}
   \lesssim \Big(\frac{1}{|I|}\int_I|f_1(x)|^r\Big)^{1/r}
   \lesssim M_r(f)(x_0).$$
To prove the corresponding up bound for $D_2$ it suffices to show
 \beq\label{D2}
 \big\|\K(f_2)(x)-\K(f_2)(c)\big\|_{v_r}\lesssim M_r(f)(x_0), \quad \forall\; x\in I.
 \eeq
To this end, fix an increasing sequence $\{t_j\}_{j\geq 0}$  and keep the meaning of $R_j$ and $K_{R_j}$ as in the proof of (i). Then
 \begin{align*}
  K_{R_j}(f_2)(x)-K_{R_j}(f_2)(c)
  &=\int_{\real}\big(K(x, y)\un_{R_j}(x- y)-K(c, y)\un_{R_j}(c-y)\big)f_2(y)dy\\
  &=\int_{\real}\big(K(x, y)-K(c, y)\big)\un_{R_j}(x-y)f_2(y)dy\\
  &\quad+\int_{\real}K(c, y)\big(\un_{R_j}(x-y)-\un_{R_j}(c-y)\big)f_2(y)dy\\
  &\;{\mathop=^{\rm def}}\; a_j+b_j.
  \end{align*}
The first term $a_j$ is easy to be handled. Indeed, by $({\rm K}_1)$
 \begin{align*}
 \big(\sum_{j=0}^{\infty}|a_j|^r\big)^{1/r}
 &\le \sum_{j=0}^{\infty}|a_j|\\
 &\le  \sum_{j=0}^{\infty}\int_{\real}\big|K(x, y)-K(c, y)\big|\un_{R_j}(x-y)|f_2(y)|dy\\
 &\lesssim \sum_{j=0}^{\infty}\int_{\real}\frac{|x-c|^\d}{|y-c|^{1+\d}}\,\un_{R_j}(x-y)|f_2(y)|dy\\
 &\le |x-c|^\d\int_{|y-c|>|I|}\frac{1}{|y-c|^{1+\d}}|f(y)|dy\\
 &\lesssim M(f)(x_0)\le  M_r(f)(x_0).
 \end{align*}
To deal with the second term $b_j$ we introduce, as in the proof of Theorem~\ref{Thm:variation-Ap+}, the following sets
 $$J_1=\big\{j\;:\; t_{j+1}-t_j\le |x-c|\big\}\quad\textrm{and}\quad J_2=\big\{j\;:\; t_{j+1}-t_j> |x-c|\big\}.$$
Then
 \begin{align*}
 \big|\un_{R_j}(x-y)-\un_{R_j}(c-y)\big|
 &\le \un_{R_j}(x-y)+\un_{R_j}(c-y), \;j\in J_1;\\
 \big|\un_{R_j}(x-y)-\un_{R_j}(c-y)\big|
 &\le \un_{R_{(t_j,\, t_j+|x-c|}]}(x-y) +\un_{R_{(t_{j+1}, \,t_{j+1}+|x-c|]}}(x-y)\\
 &+\un_{R_{(t_j,\, t_j+|x-c|]}}(c-y) +\un_{R_{(t_{j+1}, \,t_{j+1}+|x-c|]}}(c-y), \;j\in J_2.
  \end{align*}
We first consider the part on $J_1$. By $({\rm K}_0)$ and the H\"older inequality
 \begin{align*}
 \sum_{j\in J_1}|b_j|^r
 &\lesssim \sum_{j\in J_1}\big( \int_{\real}\big|K(c, y)|\big(\un_{R_j}(x-y)+\un_{R_j}(c-y)\big)|f_2(y)|dy\big)^r\\
 &\lesssim \sum_{j\in J_1} (t_{j+1}-t_j)^{r-1}\int_{\real}\frac1{|y-c|^r}\,\big(\un_{R_j}(x-y)+\un_{R_j}(c-y)\big)|f_2(y)|^rdy\\
 &\lesssim |x-c|^{r-1}\int_{|y-c|>|I|}\frac1{|y-c|^r}|f(y)|^rdy\lesssim (M_r(f)(x_0))^r.
  \end{align*}
The part on $J_2$ is treated in a similar way:
 \begin{align*}
 \sum_{j\in J_2}|b_j|^r
 &\lesssim \sum_{j\in J_2} \big(\int_{\real}\big|K(c,y)|\big(\un_{R_{(t_j, t_j+|x-c|]}}(x-y) +\un_{R_{(t_j, t_j+|x-c|]}}(c-y) \big)|f_2(y)|dy\big)^r\\
 &\lesssim |x-c|^{r-1}\int_{|y-c|>|I|}\frac1{|y-c|^r}|f(y)|^rdy\lesssim (M_r(f)(x_0))^r,
  \end{align*}
where we used the fact that $\{x+R_{(t_j, t_j+|x-c|]}\}_{j\in J_2}$ is a disjoint family of subsets of $\real$. Combining the preceding inequalities, we get \eqref{D2}.

\medskip\n{\bf Step~3.} We go back to the full generality of part (i). This part is almost proved in step~1. The only missing point is the weighted estimate for the good part $g$ there. The ingredient for this estimate is the weighted type $(p_0, p_0)$ of $\V_q\K$ with respect to any weighted $w\in A_{p_0}$. Now step~2 makes this at our disposal. Thus the missing point is fixed up in the weighted case, so we have proved part (i). Thus the proof of the theorem is complete.
 \end{proof}

\begin{rk}
The two-sided version of Corollary~\ref{BMO:variation-Ap} admits an analogue for singular integrals thanks to \eqref{Inequality:sharpSI}.
\end{rk}



In the rest of this section we give two  important examples  to which Theorem~\ref{vq CZ} applies.

\medskip\n{\bf Hilbert transform.} The first variation inequalities for singular integral operators are those for the Hilbert transform $H$ proved in \cite{CJRW1}. Let $H$ and $H_t$ be as in \eqref{H} and  \eqref{Ht}.
Let $\H (f)(x)=\{H_t(f)(x)\}_{t>0}$. The main theorem of \cite{CJRW1} asserts that for any $2<q<\8$ the operator $\V_q\H$ is  of type $(p,p)$ for $1<p<\8$ and weak type $(1, 1)$. Then applying Theorem~\ref{vq CZ} to $H$, we get the following

\begin{cor}\label{vq H}
 Let $2<q<\8$. Then for $1<p<\8$ and $w\in A_p$
 $$\int_{\real}\big(\V_q\H(f)(x)\big)^pw(x)dx\lesssim \int_{\real}|f(x)|^pw(x)dx$$
 and for $w\in A_1$
  $$w\big(\big\{x:\V_q\H(f)(x)>\l\big\}\big)\lesssim\frac1\l \int_{\real}|f(x)|w(x)dx.$$
 \end{cor}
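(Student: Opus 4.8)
\emph{Proof proposal.} The plan is to deduce this corollary directly from Theorem~\ref{vq CZ} applied to the Hilbert kernel $K(x,y)=\frac1{x-y}$, the only points to verify being that this kernel meets the regularity hypotheses $({\rm K}_0)$--$({\rm K}_2)$ and that the required unweighted $L^{p_0}$-boundedness of $\V_q\K$ is available.

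First I would check the kernel conditions with exponent $\d=1$. Condition $({\rm K}_0)$ is $|K(x,y)|=\frac1{|x-y|}$, which is clear. For $({\rm K}_1)$ write
$$K(x,y)-K(z,y)=\frac1{x-y}-\frac1{z-y}=\frac{z-x}{(x-y)(z-y)},$$
and observe that when $|x-y|>2|x-z|$ the triangle inequality gives $|z-y|\ge |x-y|-|x-z|>\tfrac12|x-y|$, so the right-hand side is at most $\frac{2|x-z|}{|x-y|^2}$. Condition $({\rm K}_2)$ is handled identically since
$$K(y,x)-K(y,z)=\frac1{y-x}-\frac1{y-z}=\frac{x-z}{(y-x)(y-z)},$$
and again $|y-z|>\tfrac12|x-y|$ under the hypothesis $|x-y|>2|x-z|$. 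Next I would note that, in the notation of Section~\ref{Singular integrals}, the singular integral attached to this kernel is exactly $H$, and that for $I=(t,\,\8)$ one has $R_I=\{x:|x|>t\}$, hence $K_t=H_t$, $\K(f)=\H(f)$ and $\V_q\K=\V_q\H$; so the framework of Theorem~\ref{vq CZ} applies literally. Finally, the main theorem of \cite{CJRW1} asserts that for every $2<q<\8$ the operator $\V_q\H$ is of strong type $(p_0,p_0)$ for any $1<p_0<\8$ (take, say, $p_0=2$), which is precisely the standing hypothesis of Theorem~\ref{vq CZ}.

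With these facts in place, both assertions follow at once: part (ii) of Theorem~\ref{vq CZ} gives the weighted strong type $(p,p)$ inequality for $1<p<\8$ and $w\in A_p$, and part (i) gives the weighted weak type $(1,1)$ inequality for $w\in A_1$. I do not expect any genuine obstacle here; the entire substance is contained in Theorem~\ref{vq CZ} and in the unweighted estimates of \cite{CJRW1}. The only mild care needed is to make sure the principal-value convention for $H$ and the truncation convention $H_t$ in \eqref{Ht} align with the abstract truncations $K_t$ of the theorem, which is immediate.
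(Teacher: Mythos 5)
Your proposal is correct and coincides with the paper's primary argument, which deduces Corollary~\ref{vq H} directly from Theorem~\ref{vq CZ} applied to the Hilbert kernel together with the unweighted $L^{p_0}$ bound from \cite{CJRW1}; you merely make explicit the (easy) verification of $({\rm K}_0)$--$({\rm K}_2)$ with $\d=1$ and the identification $K_t=H_t$ that the paper leaves implicit. The paper afterwards also offers a second, genuinely different proof via a $v_q$-valued Calder\'on--Zygmund kernel and smooth truncations, but that is presented as an alternative and is not needed for the corollary itself.
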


What is new in this statement is the weak type $(1, 1)$ since the type $(p, p)$ was already proved in \cite{GT}. In view of the importance of the Hilbert transform, we wish to present an alternative simple proof of the above corollary by using  vector-valued kernel techniques. The main interest of this new proof is on the weak type $(1,1)$ inequality. Incidentally, this new approach gives a much simpler proof of the unweighted weak type $(1, 1)$ inequality  of  \cite{CJRW1}.

Let $\f$ be a $C^2$ function on $\real$ such that $\un_{[3/2,\,\8)}\le \f\le\un_{[1/2,\,\8)}$. Consider the smooth truncation of $H$:
 $$\wt H_t(f)(x)=\int_{\real}\f(\frac{|x-y|}t)\frac{f(y)}{x-y}\,dy.$$
Let $\wt\H (f)(x)=\{\wt H_t(f)(x)\}_{t>0}$. We will view $\wt\H$ as a Calder\'on-Zygmund singular integral with a vector-valued kernel. To this end define $H_\f(x)=\{\f(\frac{|x|}t)\frac{1}{x}\}_{t>0}$. Then $H_\f$ is a kernel on $\real$ with values in $v_q$, as shown by the following simple lemma.

\begin{lem}
 $H_\f$ is a regular Calder\'on-Zygmund  $v_q$-valued kernel in the following sense
 $$\|H_\f(x)\|_{v_q}\lesssim \frac1{|x|}\quad\textrm{and}\quad \|H_\f(x+y)-H_\f(x)\|_{v_q}\lesssim \frac{|y|}{|x|^2},\quad\forall\; |x|>2|y|.$$
\end{lem}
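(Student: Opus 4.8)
The plan is to establish both estimates by direct pointwise calculation on the $v_q$-norm, using the $C^2$ regularity of $\f$ together with Lemma~\ref{basic ineq}. For the size estimate, fix $x\neq 0$ and an increasing sequence $\{t_j\}$ of positive numbers. The $j$-th difference is
$$\f\Big(\frac{|x|}{t_{j+1}}\Big)\frac1x-\f\Big(\frac{|x|}{t_j}\Big)\frac1x
=\frac1x\Big(\f\big(\tfrac{|x|}{t_{j+1}}\big)-\f\big(\tfrac{|x|}{t_j}\big)\Big).$$
Since $\f$ is supported in $[1/2,\8)$ and constant ($=1$) on $[3/2,\8)$, the term $\f(|x|/t_j)$ is nonzero only when $t_j\le 2|x|$ and varies only when $|x|/t_j\in[1/2,3/2]$, i.e. when $t_j\in[\tfrac23|x|,2|x|]$. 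Thus only boundedly many consecutive indices $j$ contribute, and on those the mean value theorem gives
$$\Big|\f\big(\tfrac{|x|}{t_{j+1}}\big)-\f\big(\tfrac{|x|}{t_j}\big)\Big|
\le \|\f'\|_\8\,|x|\,\Big|\frac1{t_{j+1}}-\frac1{t_j}\Big|
=\|\f'\|_\8\,|x|\,\frac{t_{j+1}-t_j}{t_{j+1}t_j}.$$
Restricting the sum over $j$ to the relevant range $t_j\approx|x|$, one bounds $\sum_j|x|^{-1}(t_{j+1}-t_j)/(t_{j+1}t_j)$ either crudely by noting there are $O(1)$ terms each of size $O(|x|^{-1})$, or by invoking Lemma~\ref{basic ineq} with $r=2$ after discarding the factor $1/x$; in any case the $v_q$-norm (indeed the $v_1$-norm, hence a fortiori $v_q$) is $\lesssim |x|^{-1}$.

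For the smoothness estimate, fix $|x|>2|y|$ and again an increasing sequence $\{t_j\}$. Write the $j$-th difference of $H_\f(x+y)-H_\f(x)$ as
$$\f\Big(\tfrac{|x+y|}{t_{j+1}}\Big)\tfrac1{x+y}-\f\Big(\tfrac{|x|}{t_{j+1}}\Big)\tfrac1x
-\Big(\f\big(\tfrac{|x+y|}{t_j}\big)\tfrac1{x+y}-\f\big(\tfrac{|x|}{t_j}\big)\tfrac1x\Big),$$
and split it using the elementary decomposition $g(s_{j+1})-g(s_j) - (h(s_{j+1})-h(s_j))$ where one term carries the variation of the scalar prefactor $\tfrac1{x+y}-\tfrac1x$ (of size $\lesssim|y|/|x|^2$ since $|x+y|\approx|x|$) applied to the $v_q$-kernel $\{\f(|x|/t)\}_{t>0}$ of size $\lesssim 1$, and the other term carries the difference $\f(|x+y|/t)-\f(|x|/t)$ times $1/(x+y)$. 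For the second term one uses $|\f(|x+y|/t)-\f(|x|/t)|\le\|\f'\|_\8\,\big||x+y|-|x|\big|/t\le\|\f'\|_\8\,|y|/t$, and, as before, this is nonzero only for $t\approx|x|$; the telescoping structure in $j$ together with Lemma~\ref{basic ineq} (or the $O(1)$-many-terms observation) then yields a bound $\lesssim |y|/|x|^2$. Assembling both pieces gives $\|H_\f(x+y)-H_\f(x)\|_{v_q}\lesssim |y|/|x|^2$.

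The mild technical point — and the only place requiring care — is bookkeeping the variation jointly in the two parameters $t$ and ($x$ versus $x+y$): one must make sure that differencing in $j$ does not destroy the cancellation coming from the scales $t_j,t_{j+1}$ being close. This is handled exactly as in the treatment of $\alpha_j$ and $\beta_j$ in the proof of Theorem~\ref{Thm:variation-Ap+}: one separates indices according to whether $t_{j+1}\ge 2t_j$ or not, controls the first class by a geometric series anchored at the smallest relevant scale $t\approx|x|$, and the second class by an integral $\int |x|^{-r}\,dt$ over $t\approx|x|$, which is precisely the content of Lemma~\ref{basic ineq}. Since $v_1\subset v_q$ contractively, it suffices throughout to bound the $v_1$-norm, which is what the above $\sum_j|\cdot|$ estimates actually produce. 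No genuine obstacle arises because the cutoff $\f$ localizes every sum to a bounded range of scales.
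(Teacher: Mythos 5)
Your first estimate is essentially fine (modulo one inaccuracy noted below), but the smoothness estimate has a genuine gap in the term you call "the second term". There you bound $\f(|x+y|/t)-\f(|x|/t)$ only through the pointwise estimate $|\f(|x+y|/t)-\f(|x|/t)|\le\|\f'\|_\8\,|y|/t$. A sup-in-$t$ bound does not control what the $v_q$-norm actually measures, namely the variation in $t$ of the function $g(t)=\f(|x+y|/t)-\f(|x|/t)$: a priori $g$ could oscillate between values of size $|y|/|x|$ many times on the window $t\approx|x|$, and an arbitrary increasing sequence $\{t_j\}$ samples all of these oscillations. (This is also why your claim that "only boundedly many consecutive indices $j$ contribute" is wrong as stated: infinitely many $t_j$ may lie in $[\tfrac23|x|,2|x|]$. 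For the size estimate this is harmless, because the differences of $\f(|x|/t_j)$ are controlled by the total variation of $\f$ over disjoint subintervals of $[1/2,3/2]$; but for $g$ the same observation only gives total variation $\lesssim 1$, with no gain in $|y|$.) To get variation of $g$ of order $|y|/|x|$ you must bound $\int_0^\8|g'(t)|\,dt$, and $g'$ involves the difference $\f'(|x|/t)\tfrac{|x|}{t^2}-\f'(|x+y|/t)\tfrac{|x+y|}{t^2}$, i.e.\ a difference of $\f'$ at nearby points. That is precisely where the $C^2$ hypothesis is needed, and your proposal never uses $\f''$; neither "telescoping in $j$" nor Lemma~\ref{basic ineq} can substitute for it. With only $\|\f'\|_\8$ and $\mathrm{Var}(\f)\lesssim1$ one can at best interpolate, via $\sum_j|\Delta_j g|^q\le(2\|g\|_\8)^{q-1}\mathrm{Var}(g)$, to get $\|g\|_{v_q}\lesssim(|y|/|x|)^{1-1/q}$, hence only a H\"older-type bound $\tfrac{|y|^{\d}}{|x|^{1+\d}}$ with $\d=1-1/q<1$ — weaker than the stated $\tfrac{|y|}{|x|^2}$ (though it would still be enough for the vector-valued Calder\'on--Zygmund machinery, it does not prove the lemma).

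The fix is short and is the paper's route: for a differentiable function $\phi$ on $(0,\8)$ one has $\|\phi\|_{v_q}\le\|\phi\|_{v_1}=\int_0^\8|\phi'(t)|\,dt$. Applied to $t\mapsto\f(|x|/t)\tfrac1x$ this gives the size bound $\tfrac1{|x|}\int_0^\8|\f'|$ at once (no case analysis on the $t_j$ is needed). For the smoothness bound, differentiate $H_\f(x+y)-H_\f(x)$ in $t$, note that $\tfrac{|x|}{x}=\tfrac{|x+y|}{x+y}$ when $|x|>2|y|$ so the two sign factors cancel, and change variables to reduce $\|H_\f(x+y)-H_\f(x)\|_{v_1}$ to $\int_0^\8|\f'(t|x+y|)-\f'(t|x|)|\,dt$; then the mean value theorem applied to $\f'$ produces the factor $|y|\int_0^\8|\f''(t)|\,t\,dt\,/\,((1-s)|x|+s|x+y|)^2\lesssim|y|/|x|^2$. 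If you prefer to keep your splitting, keep the first piece as you have it (it is correct), but replace the treatment of the second piece by this MVT-on-$\f'$ computation; the $\f''$ input cannot be avoided if you want the full Lipschitz smoothness in $x$.
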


\begin{proof}
 Note that for a differentiable function $\phi$ on $(0,\,\8)$
 $$\|\phi\|_{v_q}\le\|\phi\|_{v_1}=\int_0^\8|\phi'(t)|dt.$$
Then
 $$\|H_\f(x)\|_{v_q}\le  \frac1{|x|} \int_0^\8|\f'(t)|dt$$
and
 \begin{align*}
 \|H_\f(x+y)-H_\f(x)\|_{v_q}
 &\le  \int_0^\8\big|\f'(t|x+y|)-\f'(t|x|)\big|dt\\
 &\le|y|\int_0^\8\int_0^1|\f''(t(1-s)|x|+ts|x+y|)|tds\,dt\\
 &\le |y|\int_0^1\int_0^\8\frac{|\f''(t)|t}{((1-s)|x|+s|x+y|)^2}dt\,ds\\
 &\lesssim \frac{|y|}{|x|^2}\int_0^\8 |\f''(t)|tdt.
 \end{align*}
\end{proof}

\n\emph{An alternative proof of Corollary~\ref{vq H}.} For $t>0$ we have
  $$(H_t- \wt H_t)(f)(x)=\int_{\real}\frac1y (\un_{[1,\,\8)}-\f)(|y|)f(x-ty)dy
 =\psi_t^+*f(x)+\psi_t^-*f(x),$$
 where
 $$\psi^+(y)=\frac1y (\un_{[\frac12,\,\frac32]}-\f\un_{[\frac12,\,\frac32]})(y)\textrm{ and }
 \psi^-(y)=\frac1y (\f\un_{[-\frac32,\,-\frac12]}-\un_{[-\frac32,\,-\frac12]})(y).$$
Thus Corollary~\ref{Thm:variation-Ap conv} insures that the operators $\V_q\H$ and $\V_q\wt\H$ have the same boundedness properties on $L^p(\real, w)$.  Hence we need only to show Corollary~~\ref{vq H} for $\V_q\wt\H$ instead of $\V_q\H$.

Now using the kernel $H_\f$ we can write
 $$\wt\H(f)(x)=H_\f*f(x)=\int_{\real}H_\f(x-y)f(y)dy.$$
Namely, $\wt\H$ is a Calder\'on-Zygmund operator with the $v_q$-valued kernel $H_\f$. By standard Calder\'on-Zygmund  theory for vector-valued kernels (cf. eg. \cite{RTR} and section~IV.3 of \cite{gar-rubio}),  we see that if $\wt\H$ is bounded from $L^{p_0}(\real)$ to $L^{p_0}(\real;v_q)$ for some $1<p_0<\8$, then $\wt\H$ is bounded from $L^{p}(\real, w)$ to $L^{p}(\real, w;v_q)$ for any $1<p<\8$ and $w\in A_p$, and from $L^{1}(\real, w)$ to $L^{1,\8}(\real, w;v_q)$ for any $w\in A_1$.  However, the type $(2, 2)$ of  $\V_q\H$ proved in \cite{CJRW1} implies the same property of $\V_q\wt\H$. Therefore, to conclude the proof, it remains to note that $\|\wt\H(f)(x)\|_{v_q}=\V_q\wt\H(f)(x)$. \cqd

\medskip\n{\bf Cauchy integrals on Lipschitz curves.} Let $\f:\real\to\real$ be a Lipschitz function. The Cauchy integral on the graph of $\f$ is defined by
 $$\int_{\real}\frac{1+i\f'(y)}{x-y+i(\f(x)-\f(y))}\,f(y)dy.$$
Since the multiplication by the bounded function $1+\f'$ does not affect any boundedness property of the Cauchy integral, we introduce the following regular Calder\'on-Zygmund kernel
 $$C^\f(x)=\frac1{x+i\f(x)}, $$
the associated  singular integral and truncations
 $$C^\f(f)(x)=\int_{\real}C^\f(x-y)f(y)dy,\quad C^\f_t(f)(x)=\int_{|x-y|>t}C^\f(x-y)f(y)dy.$$
The celebrated paper \cite{CMM} shows that $C^\f$ is bounded on $L^2(\real)$ (so on $L^p(\real)$ too for any $1<p<\8$). As before, let $\C^\f(f)(x)=\{C^\f_t(f)(x)\}_{t>0}$ and consider the $q$-variation operator $\V_q\C^\f$. It is proved in \cite{MTo2} that this operator is bounded on $L^2(\real)$ (see also \cite{MTo} and \cite{Mas} for related results). Thus Theorem~\ref{vq CZ} implies the following

\begin{cor}\label{vq C}
 Let $2<q<\8$. Then the operator $\V_q\C^\f$ is bounded on $L^p(\real, w)$ for $1<p<\8$ and $w\in A_p$, and from $L^1(\real, w)$  to $L^{1, \8}(\real, w)$ for  $w\in A_1$.
 \end{cor}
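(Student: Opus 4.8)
The statement to be proved is Corollary~\ref{vq C}: the $q$-variation operator $\V_q\C^\f$ for the Cauchy integral on a Lipschitz curve is bounded on $L^p(\real,w)$ for $1<p<\8$, $w\in A_p$, and maps $L^1(\real,w)$ into $L^{1,\8}(\real,w)$ for $w\in A_1$.

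The plan is to deduce this directly from Theorem~\ref{vq CZ}, which is the abstract weighted extrapolation result: once the relevant kernel satisfies the regularity conditions $({\rm K}_0)$--$({\rm K}_2)$ and the associated $q$-variation operator is of strong type $(p_0,p_0)$ for some single $1<p_0<\8$, all the weighted inequalities follow. So there are really only two things to check. First, that the Cauchy kernel $C^\f(x)=\frac{1}{x+i\f(x)}$ satisfies $({\rm K}_0)$--$({\rm K}_2)$. Since $\f$ is Lipschitz, say with constant $L$, we have $|x|\le|x+i\f(x)|\le (1+L^2)^{1/2}|x|$, which immediately gives $({\rm K}_0)$ (with the kernel being a function of $x-y$ alone, a convolution kernel). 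For the smoothness conditions $({\rm K}_1)$ and $({\rm K}_2)$, one differentiates: $\frac{d}{dx}C^\f(x)=-\frac{1+i\f'(x)}{(x+i\f(x))^2}$ for a.e.\ $x$, and $|1+i\f'(x)|\le (1+L^2)^{1/2}$ while $|x+i\f(x)|^2\gtrsim|x|^2$, so $|(C^\f)'(x)|\lesssim|x|^{-2}$ a.e.; the standard mean-value argument along the segment joining $x$ and $z$ (valid since $C^\f$ is Lipschitz away from the origin and the segment stays away from the origin when $|x-y|>2|x-z|$) then yields $({\rm K}_1)$ with $\d=1$, and $({\rm K}_2)$ follows identically because the kernel is of convolution type and odd-type symmetry relates $K(x,y)$ and $K(y,x)$ up to sign. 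Second, one needs the strong type $(p_0,p_0)$ hypothesis for $\V_q\C^\f$ on the unweighted space for some $p_0$: this is exactly what is quoted from \cite{MTo2}, namely that $\V_q\C^\f$ is bounded on $L^2(\real)$; combined with the $L^2$-boundedness of $C^\f$ itself from \cite{CMM}, we may take $p_0=2$.

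Concretely I would write: by the Calderón--Zygmund estimates on $C^\f$ just recalled, the kernel satisfies $({\rm K}_0)$--$({\rm K}_2)$ with $\d=1$; by \cite{MTo2} the operator $\V_q\C^\f$ is bounded on $L^2(\real)$, so the hypothesis of Theorem~\ref{vq CZ} holds with $p_0=2$; therefore Theorem~\ref{vq CZ}(i) gives the weak type $(1,1)$ bound for $w\in A_1$ and Theorem~\ref{vq CZ}(ii) gives the $L^p(w)$ bound for $1<p<\8$ and $w\in A_p$. That is the entire argument.

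The only place requiring any genuine care — and hence the ``main obstacle,'' though it is mild — is verifying the regularity of $C^\f$, in particular making the mean-value/segment argument for $({\rm K}_1)$ and $({\rm K}_2)$ rigorous given that $\f$ is merely Lipschitz (so $C^\f$ is only Lipschitz, not $C^1$, away from $0$); this is handled by noting $C^\f$ is locally Lipschitz on $\real\setminus\{0\}$ with the stated derivative bound a.e., so $|C^\f(x)-C^\f(z)|\le\int$ of $|(C^\f)'|$ along the segment $\lesssim |x-z|\sup_{\text{segment}}|\cdot|^{-2}\lesssim |x-z|/|x-y|^2$ whenever $|x-y|>2|x-z|$. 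Everything else is a direct citation of Theorem~\ref{vq CZ} and the quoted $L^2$-boundedness results, so no further computation is needed.
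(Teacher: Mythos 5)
Your proposal is correct and follows exactly the paper's route: verify that the Cauchy kernel is a regular Calder\'on--Zygmund kernel satisfying $({\rm K}_0)$--$({\rm K}_2)$ (with $\d=1$, which the paper treats as immediate), quote the $L^2(\real)$-boundedness of $\V_q\C^\f$ from \cite{MTo2}, and apply Theorem~\ref{vq CZ} with $p_0=2$. The only cosmetic remark is that the kernel regularity is most cleanly seen from the algebraic identity for the difference of two kernel values (using $|x-y|\le|x-y+i(\f(x)-\f(y))|$), which avoids any appeal to a.e.\ differentiability, but your mean-value argument is equally valid.
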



\section{Vector-valued extension}\label{sect-vector}


In this section  we will show that all results in the previous two admit analogues for functions taking values in $\el^\rho$ for $1<\rho<\8$. We will do this only for Theorems~\ref{Thm:variation-Ap+} and \ref{vq CZ}. Note that these vector-valued results are new in the unweighted case too. Even in this latter case, their proofs depend on the previous weighted variation inequalities in the scalar case.

\medskip

The following result improves Fefferman-Stein's celebrated vector-valued maximal inequality \cite{FS} in the one-dimensional case. Note that the one-sided analogue of Fefferman-Stein's inequality is proved in \cite{GS} for $1<p<\8$ and in \cite{X} for $p=1$.

\begin{thm}\label{Thm:variation-Ap+vector}
 Let $q>2$ and $1<\rho<\8$.
\begin{enumerate}[\rm (i)]
  \item Let $1<p<\infty$ and $w \in A^+_p$. Then for any finite sequence $\{f_k\}_{k\ge1}\subset\ell^p(\ent, w)$
  $$\sum_{n\in\ent}\Big(\sum_{k\ge1}\big(\V_q \A^+(f_k)(n)\big)^\rho\Big)^{p/\rho}w(n)
  \lesssim \sum_{n\in\ent}\Big(\sum_{k\ge1}|f_k(n)|^\rho\Big)^{p/\rho}w(n).$$
  \item  Let $w \in A^+_1$. Then for any finite sequence $\{f_k\}_{k\ge1}\subset\ell^1(\ent, w)$ and $\l>0$
  $$w\big(\big\{n\in\ent: \sum_{k\ge1}\big(\V_q \A^+(f_k)(n)\big)^\rho>\l^\rho\big\}\big)
  \lesssim \frac1\l\sum_{n\in\ent}\Big(\sum_{k\ge1}|f_k(n)|^\rho\Big)^{1/\rho}w(n).$$
\end{enumerate}
 \end{thm}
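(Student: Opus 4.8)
\textbf{Proof plan for Theorem~\ref{Thm:variation-Ap+vector}.}
The plan is to mirror the proof of Theorem~\ref{Thm:variation-Ap+}, but carrying the $\el^\rho$-norm along at every step. The key observation is that all the pointwise estimates in the scalar proof are genuinely pointwise and can be applied coordinate-by-coordinate, so the vector-valued inequalities reduce to scalar ones via a sharp-function argument and a Calder\'on--Zygmund argument, exactly as before. Throughout I will write $\|\cdot\|_\rho$ for the $\el^\rho$-norm of a sequence $\{g_k\}_{k\ge1}$ and $F(n)=\|\{f_k(n)\}_k\|_\rho$ for the ``scalar majorant'' of the data; similarly $G(n)=\|\{\V_q\A^+(f_k)(n)\}_k\|_\rho$.

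For part (i), I would first reduce to the case of finitely supported $f_k$'s, and then prove the pointwise estimate
$$
\big(G\big)^{+,\sharp}(n)\;\lesssim\; \big(M^+_r(\|\{f_k\}_k\|_\rho^{\,})\big)(n)
\quad\text{for some }r>1\text{ close to }1,
$$
that is, the exact analogue of \eqref{Inequality:sharp} with $\V_q(f)$ replaced by $G$ and $|f|$ replaced by $F$. To do this, fix $n_0$ and $k\ge1$, split $f_k=f_{k,1}+f_{k,2}+f_{k,3}$ with $f_{k,1}=f_k\un_{[n_0,n_0+3k]}$ as in the scalar proof, and use the triangle inequality in $v_q$ inside the $\el^\rho$-norm: $|\,\|\A^+(f_k)(n)\|_{v_q}-\|\A^+(f_{k,2})(n_0)\|_{v_q}\,|\le \|\A^+(f_{k,1})(n)\|_{v_q}+\|\A^+(f_{k,2})(n)-\A^+(f_{k,2})(n_0)\|_{v_q}$, then take $\el^\rho$ in $k$. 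The first term is handled by the vector-valued $\el^r(\el^\rho)$-boundedness of $\V_q\A^+$ (which follows from the scalar $\el^r$-boundedness of \cite{kaufman} by Fubini when $\rho=r$, and in general from a vector-valued extrapolation / the abstract Benedek--Calder\'on--Panzone principle for $v_q$-valued kernels), combined with H\"older exactly as in the bound for $E_1$. For the second term, the scalar estimates \eqref{estimate-a} and \eqref{estimate-b} were proved entirely by H\"older in $i$ and Lemma~\ref{basic ineq}, producing a bound by $M^+_r(f_k)(n_0)$; applying Minkowski's inequality in $\el^\rho$ (legitimate since $\rho>1$ and these were $\el^r$-triangle-inequality style bounds with $r<q$, $r\le\rho$ can be arranged) upgrades \eqref{II} to $\|\{\|\A^+(f_{k,2})(n)-\A^+(f_{k,2})(n_0)\|_{v_r}\}_k\|_\rho\lesssim M^+_r(F)(n_0)$. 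With the pointwise sharp-function bound in hand, Lemma~\ref{Le:malaga} applied to the scalar function $G$, together with Sawyer's self-improvement $w\in A^+_{p/r}$ and the scalar $\el^p(w)$-boundedness of $M^+_r$, yields (i).

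For part (ii), I would repeat the Calder\'on--Zygmund scheme of the scalar proof at the level of the majorant. Fix $\l>0$ and apply Lemma~\ref{1-sided CZ} to the scalar function $F=\|\{f_k\}_k\|_\rho$ to get $\O=\bigcup_iI_i$; decompose each $f_k=g_k+b_k$ accordingly, so that $\|\{g_k\}_k\|_\rho\le 2\l$ pointwise and $\|\{g_k\}_k\|_\rho\le$ (a majorant with $\el^1(w)$-norm $\lesssim\|F\|_{\el^1(w)}$), while each $b_{k,i}$ has vanishing mean on $I_i$. The good part is controlled by the vector-valued $\el^2(w;\el^\rho)$-boundedness from part (i) with $p=2$, exactly as in the scalar argument. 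For the bad part, on $\wt\O^c$ one uses the fact — valid coordinatewise and hence after taking $\el^\rho$ — that at most one/two $i$'s contribute to each scale, giving $\|\A^+(b_k)(n)\|_{v_q}^q\lesssim\sum_i\|\A^+(b_{k,i})(n)\|_{v_1}^q$; then the key pointwise bound \eqref{v1}, $\|\A^+(b_{k,i})(n)\|_{v_1}\lesssim \frac1{n_i-n}\sum_{j\in I_i}|b_{k,i}(j)|$, holds for each $k$, and after taking $\el^\rho$ in $k$ and using Minkowski the right side becomes $\frac1{n_i-n}\sum_{j\in I_i}\|\{b_{k,i}(j)\}_k\|_\rho\lesssim\frac1{n_i-n}\sum_{j\in I_i}(F(j)+\text{avg})$. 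The remaining summation over $n\notin\wt\O$ and over the intervals $I_i$, together with the $A^+_1$ property of $w$, is identical to the scalar computation and produces the bound $\frac1\l\sum_n F(n)w(n)$.

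The main obstacle I anticipate is \textbf{the base case}: one needs the \emph{vector-valued} $\el^{p_0}(\el^\rho)$-boundedness of $\V_q\A^+$ (unweighted, for a single $p_0$) to feed into both the $E_1$ estimate and the good-part estimate. When $\rho=p_0$ this is just Fubini from Kaufman--Jones et al.; for general $\rho$ it requires either the abstract vector-valued Calder\'on--Zygmund machinery applied to the $v_q$-valued kernel of $\A^+$ (whose non-regularity is precisely the difficulty that made the scalar weak-$(1,1)$ delicate), or — more cleanly — an appeal to the scalar weighted inequalities of Theorem~\ref{Thm:variation-Ap+} combined with Rubio de Francia extrapolation to produce $\el^p(\el^\rho)$ bounds. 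I would take the extrapolation route, since the scalar weighted result is already in hand; then everything else is a careful but routine transcription of the scalar proof with Minkowski's inequality inserted at each pointwise estimate. The one genuinely new verification is that the scale-counting argument for the bad part (``at most two $i$'s per scale'') survives intact coordinate-by-coordinate, which it does because the supports $I_i$ do not depend on $k$.
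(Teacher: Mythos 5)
Your overall architecture coincides with the paper's: part (i) is obtained there precisely by the route you relegate to a fallback, namely the case $p=\rho$ (which is just Theorem~\ref{Thm:variation-Ap+} summed over $k$) followed by the one-sided extrapolation theorem of \cite{MR}, extended to the vector-valued setting; and part (ii) is proved exactly as you outline, by applying Lemma~\ref{1-sided CZ} to the scalar majorant $F(n)=\|\{f_k(n)\}_k\|_{\el^\rho}$, treating the good part with (i), and treating the bad part through the coordinatewise bound \eqref{v1} plus Minkowski in $\el^\rho$. Two caveats. First, your \emph{primary} plan for (i) — a vector-valued transcription of the sharp-function argument — has a genuine flaw as written: the claimed upgrade of \eqref{II} to $\big\|\{\|\A^+(f_{k,2})(n)-\A^+(f_{k,2})(n_0)\|_{v_r}\}_k\big\|_{\el^\rho}\lesssim M^+_r(F)(n_0)$ does not follow by taking $\el^\rho$ of the scalar conclusion, because $\big(\sum_k (M^+_r(f_k)(n_0))^\rho\big)^{1/\rho}$ is \emph{not} pointwise dominated by $M^+_r(F)(n_0)$ (the supremum defining $M^+$ does not commute with the $\el^\rho$-sum in the needed direction). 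To salvage that route you would have to apply Minkowski scale-by-scale inside the dyadic decompositions used for $\alpha_j$, $\beta_j$ (where the coefficients do admit bounds uniform in $k$ and in the sequence $\{N_j\}$), or else aim for $G^{+,\sharp}\lesssim\big(\sum_k(M^+_r f_k)^\rho\big)^{1/\rho}$ and invoke a weighted one-sided vector-valued Fefferman--Stein inequality as an extra input. But note that once you invoke extrapolation from the scalar weighted result, you get the full weighted $\el^p(w;\el^\rho)$ inequality of (i) directly, not merely an unweighted base case, so the entire sharp-function transcription becomes superfluous — this is what the paper does.

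Second, in (ii) the passage from the coordinatewise estimate $\V_q\A^+(b_k)(n)\lesssim\big(\sum_i\|\A^+(b_{i,k})(n)\|_{v_1}^q\big)^{1/q}$ to a sum over $i$ \emph{outside} the $\el^\rho$-sum in $k$ requires, when $\rho<q$, first lowering the exponent $q$ to some $r$ with $1<r\le\min(q,\rho)$ and then applying Minkowski; the Chebyshev step and the convergence of $\sum_{n\le n_i-k_i}(n_i-n)^{-r}w(n)$ are then run with this $r$ (which works since $r>1$ and $w\in A_1^+$). You introduce such an $r$ only in your discussion of (i); without it the step ``take $\el^\rho$ in $k$ and proceed as in the scalar case'' is not literally available. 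This is a small repair, and with it your part (ii) is the paper's proof.
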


\begin{proof}
 The case $p=\rho$ reduces to Theorem~\ref{Thm:variation-Ap+}. Thus by the extrapolation theorem of \cite{MR}, we get part (i) for any $1<p<\8$. Note however that the vector-valued case is not covered by  \cite{MR}. But one can easily check that the extrapolation theorem there extends to the vector-valued case too. Regarding this vector-valued extension of extrapolation techniques, see also section~V.6 of \cite{gar-rubio}.

\medskip

It remains to prove the weak type $(1, 1)$ inequality. This  proof is similar to that of part (ii) of Theorem~\ref{Thm:variation-Ap+}, so we will be very brief. Let $f=\{f_k\}_k$.  $f$ is viewed as a function from $\ent$ to $\el^\rho$. We now apply Lemma~\ref{1-sided CZ} to $\l$ and the function $\f$ defined by $\f(n)=\|f(n)\|_{\el^\rho}$. Keeping the notation in that lemma and in the proof of Theorem~\ref{Thm:variation-Ap+}, (ii), we then have $f=g+b$. Both $g$ and $b$ are, of course, $\el^\rho$-valued functions on $\ent$. As in the proof of Theorem~\ref{Thm:variation-Ap+}, the good part $g$ is estimated by (ii) in the case $p=\rho$. The bad part $b$ inside the subset $\wt\O$ is treated in the same way as before. Thus our remaining task is to show (with  $b=\{b_k\}_k$)
 $$
 w\big(\big\{n\in\wt\O^c: \sum_k\big(\V_q \A^+(b_k)(n)\big)^\rho>\frac{\l^\rho}{2^\rho}\big\}\big)
  \lesssim \frac1\l\sum_{n\in\ent}\Big(\sum_k|f_k(n)|^\rho\Big)^{1/\rho}w(n).
 $$
Recall that $b=\sum_ib_i$ and each $b_i$ is supported on the interval $I_i$ and of vanishing mean.  So the same is true for $b_k$:
 $$b_k=\sum_ib_{i,k},\quad {\rm supp}(b_{i,k})\subset I_i, \quad\sum_{n\in\ent}b_{i,k}(n)=0.$$
Thus for any $n\in\wt\O^c$ we have again
 $$\big\|\A^+(b_k)(n)\big\|_{v_q}\lesssim\big(\sum_i\big\|\A^+(b_{i,k})(n)\big\|_{v_1}^q\big)^{1/q}.$$
Now choose $r$ such that $1<r\le\min(q,\;\rho)$. Then we get
  \begin{align*}
  \big(\sum_k\big(\V_q \A^+(b_k)(n)\big)^\rho\big)^{1/\rho}
  &\lesssim  \big(\sum_k(\sum_i\big\|\A^+(b_{i,k})(n)\big\|_{v_1}^q\big)^{\rho/q}\big)^{1/\rho}\\
  &\lesssim  \big(\sum_i(\sum_k\big\|\A^+(b_{i,k})(n)\big\|_{v_1}^\rho\big)^{r/\rho}\big)^{1/r}.
  \end{align*}
Hence
 $$
  w\big(\big\{n\in\wt\O^c: \sum_k\big(\V_q \A^+(b_k)(n)\big)^\rho>\frac{\l^\rho}{2^\rho}\big\}\big)
 \lesssim \frac{1}{\l^r}\sum_i\sum_{n\in \wt\O^c}(\sum_k\big\|\A^+(b_{i,k})(n)\big\|_{v_1}^\rho\big)^{r/\rho} w(n).
$$
However, for any $i$ and $n\in\wt\O^c$ with  $n\le n_i-k_i$ by \eqref{v1},   we have
 $$\|\A^+(b_{i,k})(n)\|_{v_1} \lesssim \frac{1}{n_i-n}\sum_{j\in I_i} |b_{i,k}(j)|.$$
So
  $$(\sum_k\|\A^+(b_{i,k})(n)\|_{v_1} ^\rho\big)^{1/\rho}\lesssim \frac{1}{n_i-n}\sum_{j\in I_i}\big (\sum_k|b_{i,k}(j)|^\rho\big)^{1/\rho}
  = \frac{1}{n_i-n}\sum_{j\in I_i}\|b_i\|_{\el^\rho}.$$
The rest of the proof is exactly the same as the corresponding part of the proof of Theorem~\ref{Thm:variation-Ap+} just with the norm $\|\,\|_{\el^\rho}$ instead of the absolute value. We omit the details.
\end{proof}

\begin{thm}\label{vq CZ vector}
 Let $2<q<\8$ and $1<\rho<\8$. Let $K$ be a  kernel satisfying the assumption of Theorem~\ref{vq CZ}.  Then
 \begin{enumerate}[\rm(i)]
 \item for $w\in A_1$
  $$w\big(\big\{x:\big(\sum_k\big(\V_q\K (f_k)(x)\big)^\rho\big)^{1/\rho}>\l\big\}\big)
  \lesssim\frac1\l \int_{\real}\big(\sum_k|f_k(x)|^\rho\big)^{1/\rho}w(x)dx,\; f_k\in L^1(\real, w), \l>0;$$
 \item for $1< p<\8$ and $w\in A_p$
  $$\int_{\real}\big(\sum_k\big(\V_q\K (f_k)(x)\big)^\rho\big)^{p/\rho}w(x)dx
  \lesssim \int_{\real}\big(\sum_k|f_k(x)|^\rho\big)^{p/\rho}w(x)dx,\; f_k\in L^p(\real, w).$$
   \end{enumerate}
\end{thm}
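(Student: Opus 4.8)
The plan is to follow, almost verbatim, the proofs of Theorem~\ref{vq CZ} and of Theorem~\ref{Thm:variation-Ap+vector}. The diagonal case $p=\rho$ of part~(ii) is immediate: since $\V_q\K$ is sublinear, summing the scalar weighted inequality of Theorem~\ref{vq CZ}(ii) over $k$ gives $\int_\real\sum_k\big(\V_q\K(f_k)(x)\big)^\rho w(x)\,dx\lesssim\int_\real\sum_k|f_k(x)|^\rho w(x)\,dx$ for every $w\in A_\rho$. As $\V_q\K$ is a sublinear operator bounded on $L^p(\real,w)$ for all $1<p<\8$ and $w\in A_p$ (Theorem~\ref{vq CZ}(ii)), the Rubio de Francia vector-valued extrapolation theorem (see section~V.6 of \cite{gar-rubio}) upgrades this single case into part~(ii) in full generality. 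In particular, from now on the strong vector-valued $(p_0,p_0)$ inequality is at our disposal for any fixed $1<p_0<\8$.

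For part~(i) I would reproduce Step~1 of the proof of Theorem~\ref{vq CZ} with $\el^\rho$-valued data, exactly as Theorem~\ref{Thm:variation-Ap+vector} was obtained from Theorem~\ref{Thm:variation-Ap+}. Put $f=\{f_k\}_k$ and $\f(x)=\|f(x)\|_{\el^\rho}$, and apply the classical Calder\'on--Zygmund decomposition of $\f$ at height $\l$ (one may take $\l=1$ by rescaling): $\O=\{M\f>\l\}=\bigcup_iI_i$ and $f=g+b$ with $b=\sum_ib_i$, each $b_i=\{b_{i,k}\}_k$ supported on $I_i$ and of componentwise vanishing mean, $\|g(x)\|_{\el^\rho}\lesssim\l$ a.e., and $\int_\real\|g\|_{\el^\rho}\lesssim\int_\real\|f\|_{\el^\rho}$. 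The good part $g$ is controlled as in the scalar case by Chebyshev's inequality, the vector-valued strong $(p_0,p_0)$ bound just obtained, the pointwise bound $\|g\|_{\el^\rho}\lesssim\l$, and the weak type $(1,1)$ of $M$ relative to $A_1$ weights (to bound $w(\O)$). With $\wt\O=\bigcup_i\wt I_i$, the part of $b$ lying in $\wt\O$ is handled by the doubling property of $w$ and $w(\wt\O)\lesssim w(\O)\lesssim\frac1\l\int_\real\|f\|_{\el^\rho}w$, precisely as before.

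The only genuinely new point is the estimate of $w\big(\{x\in\wt\O^c:(\sum_k(\V_q\K(b_k)(x))^\rho)^{1/\rho}>c\}\big)$. For each $x\notin\wt\O$, pick the increasing sequence $\{t_j\}$ as in the proof of Theorem~\ref{vq CZ}, together with $R_j$, $K_{R_j}$ and the splitting into $\I^1_j=\{i:I_i\subset x+R_j\}$ and $\I^2_j=\{i:I_i\not\subset x+R_j,\ I_i\cap(x+R_j)\neq\emptyset\}$, the latter having at most four elements for every $j$; the same sequence serves all $k$ since every reduction there is carried out one index $k$ at a time. For the $\I^1_j$ contribution, the componentwise vanishing mean and $({\rm K}_2)$ give, as in the scalar proof, $\sum_j\big|\sum_{i\in\I^1_j}K_{R_j}(b_{i,k})(x)\big|\lesssim\sum_i\frac{|I_i|^\d}{|x-c_i|^{1+\d}}\int_{I_i}|b_{i,k}(y)|\,dy$; taking the $\el^\rho$-norm in $k$, the triangle inequality in $\el^\rho$ over the sum in $i$ together with Minkowski's integral inequality over $\int_{I_i}$ replace $|b_{i,k}(y)|$ by $\|b_i(y)\|_{\el^\rho}$, and then $\int_{I_i}\|b_i(y)\|_{\el^\rho}dy\lesssim\int_{I_i}\|f(y)\|_{\el^\rho}dy$ together with the scalar $A_1$ computation $\int_{|x-c_i|>|I_i|}|x-c_i|^{-1-\d}w(x)dx\lesssim|I_i|^{-\d}w(y)$ for a.e.\ $y\in I_i$ gives the bound $\lesssim\int_\real\|f\|_{\el^\rho}w$. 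For the $\I^2_j$ contribution, using that $\I^2_j$ has at most four elements and $({\rm K}_0)$ gives $\sum_j|K_{R_j}(b_{i,k})(x)|\lesssim\frac1{|x-c_i|}\int_{I_i}|b_{i,k}(y)|dy$; then, choosing $r$ with $1<r\le\min(q,\rho)$ and arguing as in Theorem~\ref{Thm:variation-Ap+vector}, one bounds $(\sum_k(\V_q\K(b_k)(x))^\rho)^{1/\rho}$ by $\big(\sum_i\big(\sum_k(\sum_j|K_{R_j}(b_{i,k})(x)|)^\rho\big)^{r/\rho}\big)^{1/r}$, and Chebyshev's inequality with exponent $r$, Minkowski's inequality in $k$ inside $\int_{I_i}$, and the scalar estimate of $\int_{|x-c_i|>|I_i|}|x-c_i|^{-q}w(x)dx$ from the proof of Theorem~\ref{vq CZ} reduce the $i$-th term to $\lesssim\int_{I_i}\|f(y)\|_{\el^\rho}w(y)dy$. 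Summing over $i$ finishes part~(i).

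The principal obstacle is the vector-valued bookkeeping: one must introduce the auxiliary exponent $r\le\min(q,\rho)$ so that the contractive embeddings $v_r\hookrightarrow v_q$ and $\el^r\hookrightarrow\el^\rho$ are applied in the correct directions, and invoke Minkowski's integral inequality repeatedly to push $\|\cdot\|_{\el^\rho}$ past the integrals over the intervals $I_i$ and past the sums in $j$. Once this is arranged, every weighted estimate reduces verbatim to a scalar computation already performed in the proof of Theorem~\ref{vq CZ}; moreover the two-stage organization there (first $w\equiv1$, then general $w$) is no longer necessary, since by extrapolation the weighted vector-valued strong bounds are available from the outset.
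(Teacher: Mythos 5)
Your proposal follows essentially the same route as the paper: part~(ii) by extrapolation from the diagonal case $p=\rho$ of the scalar Theorem~\ref{vq CZ}(ii), and part~(i) by applying the Calder\'on--Zygmund decomposition to $\varphi(x)=\|f(x)\|_{\ell^\rho}$, controlling the good part via part~(ii), the part of $b$ in $\widetilde\Omega$ by doubling, and the part in $\widetilde\Omega^c$ via the same $\mathcal I^1/\mathcal I^2$ splitting with the auxiliary exponent $r\le\min(q,\rho)$. One detail is stated incorrectly, however: you claim ``the same sequence $\{t_j\}$ serves all $k$,'' but this cannot be taken literally. Since $\mathcal V_q\mathcal K(b_k)(x)$ is a supremum over increasing sequences and the near-extremizer generally varies with $k$, one must choose a $k$-dependent sequence $\{t_{j,k}\}_j$ for each $k$ (as the paper does, with index sets $\mathcal I^1_{j,k}$, $\mathcal I^2_{j,k}$) in order to get the initial bound $\mathcal V_q\mathcal K(b_k)(x)\le 2\big(\sum_j|K_{R_{j,k}}(b_k)(x)|^q\big)^{1/q}$ before summing over $k$. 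Your stated justification (``every reduction is carried out one index $k$ at a time'') actually supports using $k$-dependent sequences rather than ruling them out. Because all of the pointwise estimates you invoke are uniform over the choice of sequence, this is a cosmetic fix and the rest of the argument goes through unchanged; but as written the line is wrong.
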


\begin{proof}
 The type $(p, p)$ inequality is proved in the same way as that of Theorem~\ref{Thm:variation-Ap+vector}. Now the extrapolation techniques can be found in sections~IV.5 and V.6 of \cite{gar-rubio} (see, in particular, Remark~V.6.5 there). We will only show the weak type $(1, 1)$ inequality. This proof is similar to the corresponding one of Theorem~\ref{vq CZ}, so we will indicate only the necessary modifications in the style of the proof of Theorem~\ref{Thm:variation-Ap+vector}.

Let $f:\real\to\el^\rho$ be a compactly supported integrable function, so $f=\{f_k\}$. Applying the Calder\'on-Zygmund decomposition  to $\l=1$ and the function $\f$ given by $\f(x)=\|f(x)\|_{\el^\rho}$, we get the corresponding decomposition of $f$: $f=g+b$. We maintain all notation introduced in the proof of Theorem~\ref{vq CZ}. The good part $g$ and the part of $b$ inside $\wt\O$ are handled as in the scalar valued case. We need  only to show the following inequality
  $$
 w\big(\big\{x\in\wt\O^c: \sum_k\big(\V_q \K^+(b_k)(x)\big)^\rho>\frac{1}{2^\rho}\big\}\big)
  \lesssim \frac1\l\int_{\real}\big(\sum_k|f_k(x)|^\rho\big)^{1/\rho}w(x)dx.
 $$

For every $k$ and $x\notin \wt\O$ choose an increasing sequence $\{t_{j, k}\}_j$ such that
 $$\V_{q}\K(b_k) (x)\le 2\big(\sum_j\big|K_{(t_{j, k},\,t_{j+1,k}]}(b_k)(x)\big|^q\big)^{1/q}.$$
Let $R_{j, k}=R_{(t_{j, k},\,t_{j+1,k}]}$ and $K_{R_{j,k}}=K_{(t_{j, k},\,t_{j+1,k}]}$. Accordingly, we introduce the index sets $\I^1_{j,k}$ and $\I^2_{j,k}$. Then
 \begin{align*}
 \Big(\sum_k\big(\V_{q}\K(b_k) (x)\big)^{\rho}\Big)^{1/\rho}
 &\le
  2\Big(\sum_k\big(\sum_{j}\big|\sum_{i\in\I^1_{j,k}}K_{R_{j,k}}(b_{i,k})(x)\big|^q\big)^{\rho/q}\Big)^{1/\rho}\\
   &+ 2\Big(\sum_k\big(\sum_{j}\big|\sum_{i\in\I^2_{j,k}}K_{R_{j,k}}(b_{i,k})(x)\big|^q\big)^{\rho/q}\Big)^{1/\rho}.
   \end{align*}
For the first term on the right we use the estimate already obtained before for each $b_{i,k}$:
 $$ \big(\sum_{j}\big|\sum_{i\in\I^1_{j,k}}K_{R_{j,k}}(b_{i,k})(x)\big|^q\big)^{1/q}
 \lesssim \sum_i\frac{|I_i|^\d}{|x-c_i|^{1+\d}}\int_{\real}|b_{i,k}(y)|dy.$$
Consequently,
  \begin{align*}
  \Big(\sum_k\big(\sum_{j}\big|\sum_{i\in\I^1_{j,k}}K_{R_{j,k}}(b_{i,k})(x)\big|^q\big)^{\rho/q}\Big)^{1/\rho}
  &\lesssim \sum_i\frac{|I_i|^\d}{|x-c_i|^{1+\d}}\int_{\real}\big(\sum_k|b_{i,k}(y)|^\rho\big)^{1/\rho}dy\\
  &=\sum_i\frac{|I_i|^\d}{|x-c_i|^{1+\d}}\int_{\real}\|b_{i}(y)\|_{\el^\rho}dy\\
  &\lesssim \sum_i\frac{|I_i|^\d}{|x-c_i|^{1+\d}}\int_{I_i}\|f(y)\|_{\el^\rho}dy.
  \end{align*}
Then as in the scalar case we deduce
 $$w(\{x \in \wt\O^c: \Big(\sum_k\big(\sum_{j}\big|\sum_{i\in\I^1_{j,k}}K_{R_{j,k}}(b_{i,k})(x)\big|^q\big)^{\rho/q}\Big)^{1/\rho}
 > \frac1{8} \}) \lesssim \int_{\real}\|f(y)\|_{\el^\rho}w(y)dy.$$

To handle the part on $\I^2_{j,k}$ choose $r$ such that $1<r\le\min(q,\,\rho)$. Then for each $k$ we have ($r'$ being the conjugate index of $r$)
 \begin{align*}
 \Big(\sum_{j}\big|\sum_{i\in\I^2_{j,k}}K_{R_{j,k}}(b_{i,k})(x)\big|^q\Big)^{1/q}
 &\le  \Big(\sum_{j}\big|\sum_{i\in\I^2_{j,k}}K_{R_{j,k}}(b_{i,k})(x)\big|^r\Big)^{1/r}\\
 &\le 4^{1/r'} \Big(\sum_i \big(\sum_{j}|K_{R_{j,k}}(b_{i,k})(x)|\big)^r\Big)^{1/r}.
 \end{align*}
However, for any $k$ and $x\notin \wt\O$
 \begin{align*}
 \sum_{j}|K_{R_{j,k}}(b_{i,k})(x)|
 \lesssim \frac1{|x-c_i|}\int_{I_i}|b_{i,k}(y)|dy.
 \end{align*}
Therefore,
 \begin{align*}
 \Big(\sum_k\big(\sum_{j}\big|\sum_{i\in\I^2_{j,k}}K_{R_{j,k}}(b_{i,k})(x)\big|^q\big)^{\rho/q}\Big)^{1/\rho}
 &\le 4^{1/r'} \Big(\sum_i \big( \frac1{|x-c_i|}\int_{I_i}\|b_{i}(y)\|_{\el^\rho}dy\big)^r\Big)^{1/r}\\
 &\le 4^{1/r'} \Big(\sum_i \frac{|I_i|^{r-1}}{|x-c_i|^r}\int_{I_i}\|f(y)\|_{\el^\rho}dy\Big)^{1/r}
  \end{align*}
 Thus
  \begin{align*}
   w(&\{x \in \wt\O^c: \Big(\sum_k\big(\sum_{j}\big|\sum_{i\in\I^2_{j,k}}K_{R_j}(b_{ik})(x)\big|^q\big)^{\rho/q}
   \Big)^{1/\rho}> \frac1{8} \}) \\
   & \lesssim \sum_i\int_{\wt\O^c}\frac{|I_i|^{r-1}}{|x-c_i|^r}\int_{I_i}\|f(y)\|_{\el^\rho}dy\,w(x)dx\\
   &\lesssim \int_{\real}\|f(y)\|_{\el^\rho}w(y)dy.
   \end{align*}
The proof of the theorem is complete.
 \end{proof}


\section{Application to Ergodic Theory}\label{Sec:ergodicTheory}


In this section we will give an application of Theorem~\ref{Thm:variation-Ap+} to ergodic theory. In the following $(X,\mathcal{F}, \mu)$ will always denote a $\s$-finite measure space.

Recall that a $\sigma$-endomorphism $\Phi$ of $(X,\mathcal{F}, \mu)$ is an endomorphism  of $\mathcal{F}$ modulo $\mu$-null sets as a Boolean $\sigma$-algebra. This means:
\begin{enumerate}[$\bullet$]
  \item $\Phi\left(\bigcup_{n=1}^{\infty}E_n\right)=\bigcup_{n=1}^{\infty}\Phi E_n$ for disjoint $E_n\in \mathcal{F}$;
  \item $\Phi(X\setminus E)=\Phi X\setminus\Phi E$ for all $E\in \mathcal{F}$;
  \item $E\in \mathcal{F},\mu(E)=0\implies\mu(\Phi E)=0$.
\end{enumerate}
Such a $\Phi$ induces a unique  linear operator, also denoted by $\Phi,$ on the space of measurable functions. The action of $\Phi$ on simple functions is given by
 $$\Phi(\sum_i a_i \un_{E_i}) = \sum_i a_i \un_{\Phi(E_i)}.$$
$\Phi$ preserves almost everywhere convergence. Namely, if $f_n\to f$ a.e., then $\Phi(f_n)\to \Phi(f)$ a.e. too.   $\Phi$ is clearly positive in the sense that $\Phi(f)\ge0$ whenever $f\ge0$. Moreover, $\Phi$ possesses the following properties:
 \begin{equation}\label{Pro:endomorphism}
\Phi(fg)=\Phi(f)\Phi(g), \quad |\Phi(f)|^p = \Phi(|f|^p).
 \end{equation}

\begin{rk}\label{extension}
  It is well-known and also  easy to check that $\Phi$ extends to the vector-valued setting. Let $B$ be a Banach space. Then $\Phi$ extends to the space of all strongly measurable functions from $X$ to $B$. For instance, the action of $\Phi$ on $B$-valued simple functions is again given as before:
 $$\Phi(\sum_i b_i \un_{E_i}) = \sum_i b_i\Phi(\un_{E_i}), \quad b_i \in B.$$
Then one can easily show that for any strongly measurable $B$-valued function $f$
  $$\|\Phi(f)(x)\|_B = \Phi(\|f\|_B)(x), \quad \forall\;x \in X.$$
 \end{rk}

In the sequel we will fix $1\le p < \infty$ and assume that $T$ is a bounded positive invertible operator on  $L^p(X,\mathcal{F}, \mu)$ with positive inverse. Then by \cite{K} there exists a unique $\sigma$-endomorphism $\Phi$ and a unique measurable positive function
 $h$ on $X$ such that $T(f) = h \Phi(f)$ for any $f\in L^p(X,\mathcal{F}, \mu)$.
 Moreover, $\Phi$ is a bijection on $\mathcal{F}$ modulo null sets, so $\Phi^{-1}$ is a $\sigma$-endomorphism too.

Note that for each $i\in\ent$  the operator $T^i$ has the same properties as $T$. The associated $\sigma$-endomorphism is $\Phi^i$. Let $h_i$ be the associated modular function. It is then easy to show that
  \beq\label{h}
  T^i(f)= h_i \Phi^i (f)  \quad \textrm{and}\quad h_{i+j} = h_i \Phi^i( h_j) , \quad i,j \in \mathbb{Z}.
  \eeq
On the other hand, for each $i$ the measure $\mu_i$, defined by $\mu_i(E)=\mu(\Phi^{i}(E))$,  is absolutely continuous with respect to $\mu$. Let $D_i=d\mu_i/d\mu$. Then
 $$\int_X\Phi^i(f)d\mu=\int_XfD_id\mu \quad \textrm{and}\quad D_{i+j}=D_i\Phi^{-i}(D_j), \quad i,j \in \mathbb{Z}.$$
Let $J_i=D_{-i}$. We then deduce
 \beq\label{J}
 \int_X J_i \Phi^i(f) d\mu = \int_X f d\mu \quad \textrm{and}\quad J_{i+j} = J_i \Phi^i (J_j ), \quad  i,j \in \mathbb{Z}.
  \eeq
 Now consider the sequence of the ergodic averages of $T$:
  $$\mathcal{A}^+(T)(f)(x) = \big\{ A^+_N(T)(f)(x) \big\}_{N\ge0}
  = \Big\{ \frac1{N+1} \sum_{n=0}^N T^n (f)(x) \Big\}_{N\ge0}\;, \quad x \in X.$$

\begin{prop}\label{Thm:main-weighted}
Let $1<p<\infty$ and  $T$ be as above. Then the following conditions are equivalent
 \begin{enumerate}[\rm (i)]
 \item There exists a constant $C>0$ such that
 $$\sup_N\int_X| A^+_N(T)(f)(x)|^p d \mu(x) \le C \int_X |f(x)|^p d \mu(x). $$

 \item  For any $2<q <\8$ $($or equivalently, for one $2<q <\8)$ there exists a constant $C> 0$ such that
 $$\int_X\big(\V_q\A^+(T)(f)(x)\big)^p d \mu(x) \le C \int_X |f(x)|^p d \mu(x). $$
\end{enumerate}
\end{prop}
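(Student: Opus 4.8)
\emph{Sketch of proof.} The implication (ii)$\Rightarrow$(i) is immediate: taking the two-term increasing sequence $0<N$ in the definition of the $q$-variation gives the pointwise bound $\sup_N|A^+_N(T)(f)(x)|\le \V_q\A^+(T)(f)(x)+|f(x)|$, whence (i) follows from (ii). So the whole point is (i)$\Rightarrow$(ii), which I would prove by the Calder\'on transference principle together with the weighted discrete inequality of Theorem~\ref{Thm:variation-Ap+}. Using Kan's representation $T^n(f)=h_n\Phi^n(f)$ and the cocycle relations \eqref{h}, \eqref{J}, I introduce, for $\mu$-a.e.\ $x\in X$, the weight on $\ent$
$$W_x(n)=\frac{J_n(x)}{h_n(x)^p},\qquad n\in\ent,$$
which is well defined since $h_n,J_n>0$ a.e.\ ($T^n$ being invertible) and which is multiplicative along orbits: $W_x(n+k)=W_x(n)\,\Phi^n\big(W_\bullet(k)\big)(x)$, where $W_\bullet(k)$ denotes the function $y\mapsto W_y(k)$. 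Consequently the one-sided $A^+_p$ character of $W_x$ depends only on the orbit of $x$.

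The crucial ingredient, and what I expect to be the main obstacle, is to show that hypothesis (i) forces $W_x\in A^+_p(\ent)$ for a.e.\ $x$ with $\sup_x[W_x]_{A^+_p}<\infty$. This is exactly the transference of the necessity part of Sawyer's characterization of $A^+_p$ weights: testing (i) against functions localized along an orbit (a sweeping-out argument of Rokhlin-tower type) reconstructs, up to the constant in (i), the discrete quantity $\frac1{(k+1)^p}\big(\sum_{i=0}^kW_x(n+i)\big)\big(\sum_{i=k}^{2k}W_x(n+i)^{-1/(p-1)}\big)^{p-1}$. (Conversely, $\sup_x[W_x]_{A^+_p}<\infty$ already implies (i) via the transference argument below applied to $M^+$ together with Sawyer's maximal theorem, so the two conditions are in fact equivalent.)

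Granting the uniform $A^+_p$ bound, the transference runs as follows. Fix a bounded $f\in L^p$ with $\mu(\{f\neq0\})<\infty$, fix $M,L\in\nat$ with $L>M$, put $g_n=T^nf$ for $0\le n\le L$ and $g_n=0$ otherwise, and set $\psi_x=\{g_n(x)\}_{n\in\ent}$, a finitely supported sequence for a.e.\ $x$. For $0\le m\le L-M$ and $0\le N\le M$ one has $A^+_N(\psi_x)(m)=\frac1{N+1}\sum_{j=0}^N T^{m+j}f(x)=h_m(x)\,\Phi^m\big(A^+_N(T)(f)\big)(x)$; since the truncated variation $\V_q^M$ (variation over increasing sequences bounded by $M$) sees only these averages, and by Remark~\ref{extension} applied with $B=v_q$ the operator $\Phi^m$ commutes with the $v_q$-norm, this yields
$$\V_q\A^+(\psi_x)(m)\ \ge\ \V_q^M\A^+(\psi_x)(m)\ =\ h_m(x)\,\Phi^m\big(\V_q^M\A^+(T)(f)\big)(x),\qquad 0\le m\le L-M.$$
Applying Theorem~\ref{Thm:variation-Ap+}(i) to $\psi_x$ with the weight $W_x$ (constant depending only on $p$, $q$ and $\sup_x[W_x]_{A^+_p}$), discarding the terms with $m\notin[0,L-M]$ on the left, and using the identities $h_m(x)^pW_x(m)=J_m(x)$ and $|T^nf(x)|^pW_x(n)=\Phi^n(|f|^p)(x)\,J_n(x)$ (the latter from \eqref{Pro:endomorphism}), then integrating over $X$ by means of \eqref{J}, which gives $\int_X J_i\,\Phi^i(u)\,d\mu=\int_X u\,d\mu$, one obtains
$$(L-M+1)\int_X\big(\V_q^M\A^+(T)(f)\big)^p\,d\mu\ \lesssim\ (L+1)\int_X|f|^p\,d\mu.$$

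Dividing by $L-M+1$ and letting $L\to\infty$ removes the $L$-dependence, and then letting $M\to\infty$ with monotone convergence ($\V_q^M\nearrow\V_q$) gives $\int_X\big(\V_q\A^+(T)(f)\big)^p\,d\mu\lesssim\int_X|f|^p\,d\mu$ for such $f$, hence for all $f\in L^p$ by density and Fatou's lemma; this is (ii) for every $q>2$. Apart from the extraction of the uniform $A^+_p$ bound from (i), everything is bookkeeping with the cocycles $h_n$, $J_n$ and the change-of-variables formula \eqref{J}. I note finally that the very same scheme, run with $M^+$ in place of $\V_q\A^+$ and Sawyer's maximal theorem in place of Theorem~\ref{Thm:variation-Ap+}, shows that (i) and (ii) are also equivalent to the maximal ergodic inequality $\big\|\sup_N|A^+_N(T)(f)|\big\|_p\lesssim\|f\|_p$.
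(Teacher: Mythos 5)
Your argument is essentially the paper's: the same weight $w_x(i)=h_i(x)^{-p}J_i(x)$, the same transference using \eqref{h}, \eqref{Pro:endomorphism}, Remark~\ref{extension} and \eqref{J}, the same application of Theorem~\ref{Thm:variation-Ap+}(i) to the orbit sequence with a truncated variation, and the same averaging over a block of length $L$ followed by $L\to\infty$. The step you single out as the main obstacle --- that (i) forces $w_x\in A_p^+$ uniformly in $x$ --- is not proved in the paper either: it is precisely the known equivalence of mean boundedness with the uniform one-sided $A_p^+$ condition due to Mart\'{\i}n-Reyes and de la Torre (\cite{MT1}), which the paper simply cites, so your sketch of obtaining it by transferring the necessity part of Sawyer's theorem is consistent with how this is handled in the literature.
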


Note that (i) simply means that $T$ is a mean bounded operator. It is obvious that (ii)$\implies$(i). The other implication improves a maximal ergodic theorem of \cite{MT1} where the $q$-variation in (ii) is replaced by the ergodic maximal function. It also extends the variation inequality of \cite{Bour} for $p=2$ and of \cite{LMX} for any $1<p<\8$ for positive contractions on $L^p(X,\mathcal{F}, \mu)$. Another advantage of the variation inequality in (ii) over the maximal ergodic inequality of \cite{MT1} is the corollary that (ii) immediately implies the pointwise ergodic theorem of \cite{MT1}.

 \begin{proof}
 It is known from \cite{MT1} that  (i) is equivalent to:
 \begin{enumerate}
 \item[(i$'$)] {\it  For almost all $x\in X$ the function defined on $\ent$ by $ i \mapsto w_x(i)=h_i(x)^{-p} J_i(x)$
is an $A_p^+$ weight with relevant constant independent of $x$.}
 \end{enumerate}
Therefore,  it remains to prove (i$'$)$\implies$(ii). To this end, it suffices  to show  that there exists a constant $C$ such that for any positive integer $N$
\beq\label{vM}
\int_X\norm{\mathcal{A}^+(T)(f)(x)}_{v_q(N)}^p d\mu(x)  \le C \int_X |f(x)|^p d \mu(x),
\eeq
where $\|a\|_{v_{q(N)}}$ is defined as in (\ref{variation}) with the supremum running over all increasing sequences of integers $0 \le N_0 \le N_1 \le \dots \le N.$

Let $L\in \mathbb{N}$ and $0\le i\le L$. By \eqref{J}, \eqref{Pro:endomorphism}, Remark \ref{extension} and \eqref{h}, we have
 \begin{align*}
 \int_X \big\|\A^+(T)(f)(x)\big\|_{v_q(N)}^p d\mu(x)
 &= \int_X   J_i(x) \big\|\Phi^i (\A^+(T)(f))(x) \big\|_{v_q(N)}^p  d\mu(x) \\
 &= \int_X  \big\|T^i (\A^+(T)(f))(x) \big\|_{v_q(N)}^p w_x(i) d\mu(x) \\
 &= \int_X  \big\|\A^+(T) (T^i (f))(x) \big\|_{v_q(N)}^p w_x(i) d\mu(x).
  \end{align*}
 Summing up over $i$ yields
 $$
 \int_X \big\|\A^+(T)(f)(x)\big\|_{v_q(N)}^p d\mu(x) =
 \frac1{L+1}\int_X \sum_{i=0}^L \big\|\A^+(T) (T^i (f))(x) \big\|_{v_q(N)}^p w_x(i) d\mu(x) .
 $$
Now define the function $g_x$ on $ \mathbb{Z} $ by $i \mapsto g_x(i)=\un_{[0,\, L+N]}(i) T^{i}(f)(x)$. Then
 $$A^+_k (g_x)(i)=\frac1{k+1} \sum_{n=0}^k \un_{[0,\,N+L]}(n+i) T^{n+i}(f)(x);$$
so
 $$\big\|\A^+(T) (T^i (f))(x) \big\|_{v_q(N)}=\big\|\A^+(g_x) \big\|_{v_q(N)}\le \big\|\A^+(g_x) \big\|_{v_q},
 \quad\forall\; x\in X.$$
Thus, applying Theorem~\ref{Thm:variation-Ap+} and (i$'$), we obtain
 $$ \sum_{i=0}^L \big\|\A^+(T) (T^i (f))(x) \big\|_{v_q(N)}^p w_x(i)
  \lesssim \sum_i  \un_{[0,L+N]}(i)|T^{i} (f)(x) |^p  w_x(i) $$
 for almost all $x\in X$.
 Taking integral over $X$ implies
   \begin{align*}
   \int_X\sum_{i=0}^L \big\|\A^+(T) (T^i (f))(x) \big\|_{v_q(N)}^p w_x(i) d\mu(x)
  &\lesssim \sum_{i=0}^{L+N}\int_X|T^{i} (f)(x) |^p  w_x(i)  d\mu(x)\\
  &=(L+N+1)\|f\|_p^p.
  \end{align*}
Therefore, we deduce
 $$\int_X \big\|\A^+(T)f(x)\big\|_{v_q(N)}^p d\mu(x)
\lesssim \frac{ L+N+1}{L+1} \|f\|_p^p.$$
Then letting $L\to\8$ yields \eqref{vM}.
\end{proof}

It is natural to wonder whether Proposition~\ref{Thm:main-weighted} has a weak type $(1, 1)$ substitute  for $p=1$. We do not know how to solve this problem in the general case. However, we have the following partial result.

\begin{prop}\label{weighted1}
  Keep the previous assumption  on $T$ with $p=1$. Assume in addition that  $h=1$, i.e., $T=\Phi$.
 If $T$ is mean bounded on $L^1(X,\mathcal{F},\mu)$:
 $$\sup_N\int_X| A^+_N(T) (f)(x)| d \mu(x) \le C \int_X |f(x)| d \mu(x),$$
then for any $2<q<\8$
 $$\mu\big(\big\{x:\,  \mathcal{V}_q\A^+(T)(f)(x) > \lambda\big\}\big) \lesssim \frac{1}{\lambda} \int_X |f(x)| d \mu(x),\quad \forall\; \l>0,\;
 f\in L^1(X,\mathcal{F},\mu). $$
\end{prop}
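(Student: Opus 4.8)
The plan is to repeat the proof of Proposition~\ref{Thm:main-weighted} almost verbatim, with Theorem~\ref{Thm:variation-Ap+}(ii) used in place of Theorem~\ref{Thm:variation-Ap+}(i); the hypothesis $h=1$ is exactly what keeps the transference argument clean. The first ingredient is the $p=1$ analogue of condition~(i$'$) from the proof of Proposition~\ref{Thm:main-weighted}. Since $h=1$, the relation \eqref{h} forces $h_i\equiv1$ for every $i$, so the weights become $w_x(i)=h_i(x)^{-1}J_i(x)=J_i(x)$, and I would record that mean boundedness of $T=\Phi$ on $L^1$ is equivalent to: for a.e.\ $x$ the sequence $i\mapsto w_x(i)=J_i(x)$ is an $A_1^+$ weight on $\ent$ with constant independent of $x$. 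Indeed $(\Phi^n)^*\un=D_n$, so mean boundedness says $\sup_N\|\frac1{N+1}\sum_{n=0}^N D_n\|_\8<\8$, and by the cocycle identities in \eqref{J} this amounts to exactly the stated uniform $A_1^+$ bound; this is also contained in \cite{MT1}.

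Next I would reduce, exactly as in Proposition~\ref{Thm:main-weighted}, to proving
$$\mu\big(\{x:\|\A^+(T)(f)(x)\|_{v_q(N)}>\l\}\big)\lesssim\tfrac1\l\|f\|_1$$
with constant independent of $N$, and then let $N\to\8$ by monotone convergence (note $\|\cdot\|_{v_q(N)}\uparrow\|\cdot\|_{v_q}$). Fixing $N$, writing $E$ for the set above, and fixing $L\in\nat$ and $0\le i\le L$: because $h=1$, \eqref{h} gives $\Phi^i(\A^+(T)(f))=\A^+(T)(T^if)$, so applying $\Phi^i$ to $\un_E$ and using \eqref{J} together with Remark~\ref{extension} (with $B=v_q$) yields
$$\mu(E)=\int_X w_x(i)\,\un_{\{\|\A^+(T)(T^if)(x)\|_{v_q(N)}>\l\}}(x)\,d\mu(x).$$
Averaging over $0\le i\le L$ and then setting $g_x(i)=\un_{[0,L+N]}(i)\,T^i(f)(x)$ as in Proposition~\ref{Thm:main-weighted} (so that $A_k^+(g_x)(i)=A_k^+(T)(T^if)(x)$, hence $\|\A^+(T)(T^if)(x)\|_{v_q(N)}\le\V_q\A^+(g_x)(i)$, for $0\le i\le L$ and $0\le k\le N$), one gets
$$\mu(E)\le\frac1{L+1}\int_X w_x\big(\{i:\V_q\A^+(g_x)(i)>\l\}\big)\,d\mu(x)\lesssim\frac1{(L+1)\l}\sum_{i=0}^{L+N}\int_X|T^i(f)(x)|\,J_i(x)\,d\mu(x),$$
where the last step is Theorem~\ref{Thm:variation-Ap+}(ii) applied to the uniformly $A_1^+$ weights $w_x$. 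Since $|T^if|=\Phi^i(|f|)$ and $\int_X\Phi^i(|f|)J_i\,d\mu=\|f\|_1$ by \eqref{J}, the right side is $\lesssim\frac{L+N+1}{(L+1)\l}\|f\|_1$; letting $L\to\8$ and then $N\to\8$ completes the argument.

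The one genuinely non-routine point is the first ingredient: identifying the correct $p=1$ replacement for (i$'$) and verifying that mean boundedness of $\Phi$ on $L^1$ really does force the $w_x$ to be uniformly $A_1^+$. This is precisely where the assumption $T=\Phi$ (that is, $h=1$) is needed; it is also what prevents the modular factors $h_i$ from appearing inside the level sets in the transference identity of the middle step. Everything else — the behaviour of $\Phi$ on $v_q$-valued functions via Remark~\ref{extension}, the telescoping sum over $i$, and the two limits $L\to\8$, $N\to\8$ — is a direct transcription of the corresponding steps in the proof of Proposition~\ref{Thm:main-weighted}.
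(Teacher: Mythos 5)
Your proposal is correct and follows essentially the same route as the paper: derive from mean boundedness that $i\mapsto J_i(x)$ is uniformly $A_1^+$, then transfer via \eqref{J} and Remark~\ref{extension} to the truncated variation $\|\cdot\|_{v_q(N)}$, apply Theorem~\ref{Thm:variation-Ap+}(ii) to $g_x(i)=\un_{[0,L+N]}(i)T^i(f)(x)$, and let $L\to\8$, $N\to\8$. The only (cosmetic) difference is how the uniform $A_1^+$ bound is obtained — you read it off from $\sup_N\|\frac1{N+1}\sum_{n=0}^N D_n\|_\8<\8$ and the cocycle identity $J_{i-n}=J_i\Phi^i(D_n)$, whereas the paper tests the mean-boundedness inequality against $g=J_i\Phi^i(f)$ — but both yield exactly the estimate $\frac1{N+1}\sum_{n=0}^N J_{i-n}(x)\le C J_i(x)$ used identically thereafter.
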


\begin{proof}
 The mean boundedness of $T$ and \eqref{J} imply  that for all $N\in \mathbb{N}$ and $i \in \mathbb{Z}$
 $$\int_X \frac1{N+1} \sum_{n=0}^N \Phi^n(f) (x) d\mu(x) \lesssim\int_X J_i(x) \Phi^i( f)(x) d\mu(x), \quad\forall\; f\ge0.$$
Note that $J_i^{-1} = \Phi^i(J_{-i})$. Hence, if we call $g = J_i\Phi^i( f)$, that is $f = \Phi^{-i} ( \Phi^i(J_{-i}) g) = J_{-i}\Phi^{-i}(g)$,   by the last inequality and \eqref{J} we deduce that
 \begin{align*}
 \int_X g(x) d\mu(x)
 &\gtrsim\frac1{N+1} \sum_{n=0}^N \int_X \Phi^n(J_{-i}\Phi^{-i}(g)) (x) d\mu(x) \\
 &=\frac1{N+1} \sum_{n=0}^N \int_X \Phi^{n-i}(\Phi^i(J_{-i}))(x) \Phi^{n-i}(g) (x) d\mu(x)\\
&=\frac1{N+1} \sum_{n=0}^N \int_X \Phi^{n-i}(J_i^{-1}g) (x) d\mu(x)\\
&=\frac1{N+1} \sum_{n=0}^N \int_X J_{n-i}(x) \Phi^{n-i}(J_{i-n})(x) \Phi^{n-i}(J_i^{-1}g) (x) d\mu(x)\\
&=\frac1{N+1} \sum_{n=0}^N \int_X J_{i-n}(x) J_i^{-1}(x)g (x) d\mu(x).
\end{align*}
Thus the function
$x \mapsto \frac1{N+1} \sum_{n=0}^N J_{i-n}(x) J_i^{-1}(x)$ has to be in $L^\infty(X,\mathcal{F},\mu)$ and its $L^\infty$-norm is majorized by a constant $C$ independent of  $N$ and $i$:
 $$\frac1{N+1} \sum_{n=0}^N J_{i-n}(x) \le C J_i(x). $$
That is,  for almost all  $x$ the function $ i \mapsto  J_i(x)$ on  $\mathbb{Z}$  is an $ A_1^+$ weight with constant independent of  $x$.

Now we are ready to prove the announced weak type $(1, 1)$ inequality by transference.  As in the proof of the previous proposition, it suffices to consider $\|\mathcal{A}^+(T)(f)(\Phi^{i}(x))\|_{v_q(N)}$ for any positive integer $N$.
Given $\lambda >0$ let  $E_\lambda = \{x: \|\mathcal{A}^+(T)(f)(x)\|_{v_q(N)} > \lambda\}.$ Let $i \in \mathbb{Z}$. By \eqref{J} we have
 \begin{align*}
 \mu(E_\lambda)
 &= \int_X J_i(x) \Phi^i(\chi_{E_\lambda})(x) d\mu(x)\\
 &= \int_{\big\{x: \|\Phi^i(\mathcal{A}^+(T)f)(x)\|_{v_q(N)} > \lambda\big\}} J_i(x) d\mu(x)\\
 &= \int_{\big\{x: \|\mathcal{A}^+(\Phi)\Phi^i(f))(x)\|_{v_q(N)} > \lambda\big\}} J_i(x) d\mu(x).
\end{align*}
It then follows that
 \begin{align*}
 \mu(E_\lambda)
 &=\frac1{L+1} \sum_{i=0}^L\int_{\big\{x: \|\mathcal{A}^+(\Phi)\Phi^i(f))(x)\|_{v_q(N)} > \lambda\big\}} J_i(x) d\mu(x) \\
 &=\frac1{L+1}\int_X  \sum_{i=0}^L\un_{F_\l}(x, i) J_i(x) d\mu(x),
 \end{align*}
where
 $$F_\l=\big\{(x,i): \big\| \big\{\frac1{N+1}\sum_{n=0}^N \Phi^{n+i}(f)(x)\un_{[0,\,L+N]}(i+n) \big\}_N\big\|_{v_q(N)}
 > \lambda\big\}\subset X\times\ent.$$
Since the function $ i \mapsto  J_i(x)$ on  $\mathbb{Z}$  is an $ A_1^+$  weight with constant independent of  $x$,  applying Theorem \ref{Thm:variation-Ap+} to the function $i \mapsto \un_{[0, \,L+N]}(i) \Phi^{i}(f)(x)$, we get
 \begin{align*}
 \mu(E_\lambda)
 &\lesssim\frac1{\l(L+1)}\int_X  \sum_{i=0}^\infty \un_{[0, \,L+N]}(i) |\Phi^{i}(f)(x)| J_i(x) d\mu(x)\\
 &=\frac{L+N+1}{\l(L+1)}  \int_X  |f(x)|  d\mu(x).
 \end{align*}
 This implies the desired weak type $(1, 1)$ inequality.
 \end{proof}

The proposition above immediately gives the following pointwise ergodic theorem.

\begin{cor}
 Under the assumption of Proposition~\ref{weighted1}, $A^+_N(f)$ converges almost everywhere as $N\to\8$ for any $f\in L^1(X,\mathcal{F},\mu)$.
 \end{cor}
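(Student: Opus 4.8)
The plan is to read off the almost everywhere convergence directly from the weak type $(1,1)$ variation inequality in Proposition~\ref{weighted1}, bypassing any Banach principle and any density argument; this is exactly the mechanism advertised in the introduction, whereby a variation inequality implies pointwise convergence of the underlying family. Fix a finite $q\in(2,\8)$ and a function $f\in L^1(X,\mathcal{F},\mu)$. By Proposition~\ref{weighted1},
$$\mu\big(\big\{x:\,\V_q\A^+(T)(f)(x)>\l\big\}\big)\lesssim\frac1\l\int_X|f(x)|\,d\mu(x)\qquad\text{for all }\l>0,$$
so letting $\l\to\8$ yields $\V_q\A^+(T)(f)(x)<\8$ for $\mu$-almost every $x\in X$.

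The second ingredient is the elementary observation that any scalar sequence $a=\{a_N\}_{N\ge0}$ with $\|a\|_{v_q}<\8$ is convergent. Indeed, taking the pair $\{0,N\}$ in the definition of the $q$-variation shows that $a$ is bounded, and it suffices to argue for a real bounded sequence, splitting into real and imaginary parts if necessary. If such an $a$ did not converge, then with $\d:=\frac12\big(\limsup_N a_N-\liminf_N a_N\big)>0$ one can interleave indices where $a$ is close to its $\liminf$ with indices where it is close to its $\limsup$, producing an increasing sequence $\{N_j\}$ with $|a_{N_j}-a_{N_{j+1}}|\ge\d$ for every $j$, hence $\|a\|_{v_q}=\8$, a contradiction.

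Combining the two steps, I would apply this observation to the sequence $\A^+(T)(f)(x)=\{A^+_N(T)(f)(x)\}_{N\ge0}$ at each point $x$ of the full-measure set obtained in the first step, and conclude that $A^+_N(f)(x)=\frac1{N+1}\sum_{n=0}^N T^n(f)(x)$ converges as $N\to\8$ for $\mu$-almost every $x$, which is the assertion. I do not expect any real obstacle here: the argument is the standard passage from a variation inequality to pointwise convergence, and the only points deserving a line of care are that the merely qualitative weak type bound (rather than a quantitative one) already suffices, since finiteness of the $q$-variation is all that is used, and the routine reduction to real sequences in the convergence lemma.
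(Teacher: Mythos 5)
Your proposal is correct and is exactly the argument the paper intends: Proposition~\ref{weighted1} gives $\V_q\A^+(T)(f)(x)<\8$ almost everywhere, and a sequence of finite $q$-variation is necessarily convergent, which is why the paper simply says the proposition ``immediately gives'' the corollary. Your care about the qualitative use of the weak type bound and the elementary convergence lemma matches the standard passage from variation inequalities to pointwise convergence advertised in the introduction.
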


\begin{rk}
  Mart\'{\i}n-Reyes and de la Torre also considered symmetric ergodic averages in \cite{MT1}. Let
 $$A_N(T) (f)  =  \frac1{2N+1} \sum_{n=-N}^N T^n (f).$$
Under the assumption of Proposition~\ref{Thm:main-weighted}, they showed the following equivalence
 \begin{enumerate}[(i)]
 \item There exists a constant $C$ such that
 $$\sup_N \|A_N(T)(f)\|_p\le C \|f\|_p,\quad\forall\; f\in  L^p(X,\mathcal{F},\mu).$$
 \item There exists a constant $C'$ such that
  $$\int_X\sup_N  |A_N(T)(f)|^pd\mu\le C'\int_X|f|^pd\mu,\quad\forall\; f\in  L^p(X,\mathcal{F},\mu).$$
  \item For almost all $x$ the function  $ i \mapsto h_i(x)^{-p} J_i(x)$ is an $A_p$ weight with relevant constant independent of $x$.
 \end{enumerate}
Using Corollary~\ref{Thm:variation-Ap}, we can show the corresponding symmetric analogue of Proposition~\ref{Thm:main-weighted}. Equally,  Proposition~\ref{weighted1} admits a symmetric version.
\end{rk}

\begin{rk}
 Using Theorem~\ref{Thm:variation-Ap+vector} instead of Theorem~\ref{Thm:variation-Ap+}, one can show that the maximal ergodic inequalities in this section extend to the vector-valued case too. We leave the details to the interested reader.
\end{rk}

\noindent{\bf Acknowledgments.} We thank Wei Chen for pointing out to us the reference \cite{HLP} after he has seen our manuscript in {\it arXiv}. We also acknowledge the financial supports of NSFC grant No. 11271292, MTM2011-28149-C02-01 and ANR-2011-BS01-008-01.


\end{document}